\title{Stability in the homology of congruence subgroups}
\author{Andrew Putman\footnote{Supported in part by NSF grant DMS-1005318}\vspace{-6pt}}
\theoremstyle{plain}
\newtheorem{theorem}{Theorem}[section]
\newtheorem{maintheorem}{Theorem}
\newtheorem{proposition}[theorem]{Proposition}
\newtheorem{lemma}[theorem]{Lemma}
\newtheorem{corollary}[theorem]{Corollary}
\newtheorem{claims}{Claim}
\newtheorem{step}{Step}
\newcommand\BeginClaims{\setcounter{claims}{0}}
\newcommand\BeginSteps{\setcounter{step}{0}}
\theoremstyle{definition}
\newtheorem{assumption}{Assumption}
\newtheorem*{definition}{Definition}
\newtheorem*{notation}{Notation}
\theoremstyle{remark}
\newtheorem*{remark}{Remark}
\newtheorem*{example}{Example}
\DeclareMathOperator{\Hom}{Hom}
\DeclareMathOperator{\Ker}{ker}
\DeclareMathOperator{\Coker}{coker}
\DeclareMathOperator{\Image}{Im}
\DeclareMathOperator{\Sp}{Sp}
\DeclareMathOperator{\GL}{GL}
\DeclareMathOperator{\SL}{SL}
\newcommand\SLLie{\ensuremath{\mathfrak{sl}}}
\newcommand\R{\ensuremath{\mathbb{R}}}
\newcommand\Z{\ensuremath{\mathbb{Z}}}
\newcommand\Q{\ensuremath{\mathbb{Q}}}
\newcommand\Field{\ensuremath{\mathbb{F}}}
\DeclareMathOperator{\Char}{char}
\DeclareMathOperator{\HH}{H}
\newcommand\RH{\ensuremath{\widetilde{\HH}}}
\newcommand\Chain{\ensuremath{\text{C}}}
\newcommand\Span[1]{\ensuremath{\langle #1 \rangle}}
\DeclareMathOperator{\Dim}{dim}
\newcommand\Set[2]{\ensuremath{\{\text{#1 $|$ #2}\}}}
\DeclareMathOperator{\stab}{st}
\newcommand\hatstab{\ensuremath{\widehat{\text{st}}}}
\DeclareMathOperator{\Ind}{Ind}
\DeclareMathOperator{\Res}{Res}
\DeclareMathOperator{\SR}{SR}
\DeclareMathOperator{\ColStab}{ColStab}
\newcommand\Bound[1]{\ensuremath{#1}}
\newcommand\Split{\ensuremath{\mathcal{SB}}}
\newcommand\Poset{\ensuremath{\mathcal{P}}}
\newcommand{\Filter}[2]{\ensuremath{\mathcal{F}_{#1} #2}}
\newcommand{\Alternate}[1]{\ensuremath{\mathcal{A}_{#1}}}
\newcommand{\Trivial}[1]{\ensuremath{\mathcal{T}_{#1}}}
\newcommand{\Stab}[2]{\ensuremath{\mathcal{C}(#1 \rightarrow #2)}}
\newcommand{\Stabb}[3]{\ensuremath{\mathcal{C}(#1 \stackrel{#3}{\rightarrow} #2)}}
\newcommand{\Induce}[2]{\ensuremath{\text{IA}_{#2}(#1)}}
\newcommand{\InduceTriv}[2]{\ensuremath{\text{I}_{#2}(#1)}}
\newcommand{\twoheadlongrightarrow}{\relbar\joinrel\twoheadrightarrow}
\begin{document}

\maketitle

\begin{abstract}
The homology groups of many natural sequences of groups $\{G_n\}_{n=1}^{\infty}$
(e.g.\ general linear groups, mapping class groups, etc.)
stabilize as $n \rightarrow \infty$.  Indeed, there is a well-known machine
for proving such results that goes back to early work of Quillen.  Church
and Farb discovered that many sequences of groups whose homology groups
do not stabilize in the classical sense actually stabilize in some sense
as representations.  They called this phenomena {\em representation stability}.
We prove that the homology groups of congruence subgroups of $\GL_n(R)$ (for
almost any reasonable ring $R$) satisfy a strong version of representation stability that we call
{\em central stability}.  The definition of central stability is very different
from Church-Farb's definition of representation stability (it is defined via
a universal property), but we prove that it implies representation stability.  
Our main tool is a new machine for proving central stability that is analogous
to the classical homological stability machine.
\end{abstract}

\section{Introduction}
\label{section:introduction}

\paragraph{Arithmetic groups and Borel stability.}
The homology groups of arithmetic groups like $\SL_n(\Z)$ play important
roles in algebraic K-theory, the theory of locally symmetric spaces, and
the study of automorphic forms.  The fundamental theorem about them
is the Borel stability theorem \cite{BorelStability}, which among
other things calculates $\HH_k(\SL_n(\Z);\Q)$ for $n \gg k$.  The answer
turns out to be independent of $n$ for $n \gg k$, so one says that 
$\HH_k(\SL_n(\Z);\Q)$ {\em stabilizes}.  This stability property was
later generalized to $\Z$-coefficients by Maazen \cite{MaazenThesis}.

For $\ell \geq 2$, the {\em level $\ell$ congruence subgroup}
of $\SL_n(\Z)$, denoted $\SL_n(\Z,\ell)$, is the kernel of the 
natural map $\SL_n(\Z) \rightarrow \SL_n(\Z/\ell)$.  Both 
$\SL_n(\Z)$ and $\SL_n(\Z,\ell)$ are lattices in $\SL_n(\R)$, and Borel's
theorem applies to all such lattices.  Amazingly, the output of Borel's 
theorem does not depend on the lattice one is investigating,
so for $n \gg k$ we have 
$\HH_k(\SL_n(\Z,\ell);\Q) \cong \HH_k(\SL_n(\Z);\Q)$.  In particular,
$\HH_k(\SL_n(\Z,\ell);\Q)$ stabilizes.  

\paragraph{Torsion.}
However, the torsion in the homology of $\SL_n(\Z,\ell)$ is far more complicated.  A theorem
of Charney \cite{CharneyCongruence} says that if $\Field$ is a field such that $\Char(\Field)$
does not divide $\ell$, then $\HH_k(\SL_n(\Z,\ell);\Field)$ stabilizes.  But this restriction
on $\Char(\Field)$ is necessary -- Lee and Szczarba \cite{LeeSzczarba} proved
that $\HH_1(\SL_n(\Z,\ell);\Z) \cong (\Z/\ell)^{n^2-1}$ for $n \geq 3$, which gets larger 
and larger as $n$ increases.  There are few concrete calculations of any of the higher
integral homology groups of $\SL_n(\Z,\ell)$, and their structure remains largely
a mystery.  Indeed, aside from certain small values of $n$ and $\ell$, even
$\HH_2(\SL_n(\Z,\ell);\Z)$ is not known.  Moreover, the sporadic calculations that do
exist display no obvious patterns.

In this paper, we give a precise description of how $\HH_k(\SL_n(\Z,\ell);\Field)$ changes as $n$ increases 
for fields $\Field$ whose characteristic is positive but not too small.  
An easily stated consequence of our results is the following.

\begin{maintheorem}
\label{maintheorem:slnzlpoly}
For $k \geq 1$, there exists some $P_k$ such that if $\Field$ is a field with
$\Char(\Field) \geq P_k$, then for all $\ell \geq 2$, there exists a polynomial $\phi(n)$ such that
$\phi(n) = \Dim_{\Field} (\HH_k(\SL_n(\Z,\ell);\Field))$
for $n \gg 0$.
\end{maintheorem}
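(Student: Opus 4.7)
The plan is to deduce Theorem~\ref{maintheorem:slnzlpoly} from the central stability of the sequence $V_n := \HH_k(\SL_n(\Z,\ell);\Field)$ provided by the central stability machine developed in this paper. Each $V_n$ carries a $\GL_n(\Z/\ell)$-action induced by the outer action of the quotient $\SL_n(\Z)/\SL_n(\Z,\ell) \cong \SL_n(\Z/\ell)$, so it makes sense to ask for stability in the category of $\GL_n(\Z/\ell)$-representations.

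First, I would verify the geometric hypotheses of the central stability machine for the groups $\SL_n(\Z,\ell)$: namely, high connectivity (in a range linear in $n$) of a suitable simplicial complex of partial bases of $(\Z/\ell)^n$, adapted so that simplex stabilizers interact well with the congruence structure. This is the analogue of the input used by van der Kallen and Maazen in the classical homological stability setting, equipped with enough extra bookkeeping to carry the full $\GL_n(\Z/\ell)$-action. The large-characteristic hypothesis $\Char(\Field) \geq P_k$ enters in the subsequent spectral sequence arguments: it guarantees that the finite group $\GL_n(\Z/\ell)$ does not contribute obstructing classes, so that connectivity of the complex transports cleanly into central stability of $V_n$.

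Once central stability is in hand, the universal property built into its definition describes $V_n$, for $n$ beyond an explicit threshold $n_0 = n_0(k)$, as the universal centrally stable extension of the finite data $V_{n_0-1} \to V_{n_0}$; equivalently, each such $V_n$ is a subquotient of a polynomial functor of bounded degree $d(k)$ applied to the defining representation $(\Z/\ell)^n \otimes_\Z \Field$. Polynomial functors of fixed degree $d$ applied to an $n$-dimensional space have $\Field$-dimension polynomial in $n$ of degree at most $d$ (as is visible from $\dim \mathrm{Sym}^d(\Field^n) = \binom{n+d-1}{d}$ and $\dim \wedge^d \Field^n = \binom{n}{d}$), so tracking the multiplicities of the constituents of $V_n$ through the central stability presentation produces the desired polynomial $\phi(n)$.

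I expect the main obstacle to lie in this last step of controlling the multiplicities: the most direct description of $V_n$ coming out of central stability expresses it as a colimit of induced representations from parabolic-type subgroups of $\GL_n(\Z/\ell)$, whose individual dimensions are exponential in $n$, and one must exploit the universal property to cancel these exponential contributions down to a single polynomial expression. Doing so requires understanding how polynomial-functor representations of $\GL_n$ over $\Z/\ell$ decompose as $n$ grows, which in turn is where the assumption that $\Char(\Field)$ is large will be used most forcefully, guaranteeing that the relevant representations behave ``polynomially'' rather than exhibiting modular pathologies.
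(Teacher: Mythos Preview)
Your proposal contains a genuine conceptual mismatch with the paper's framework. The central stability proved here is for the action of the \emph{symmetric group} $S_n$ (embedded via permutation matrices), not for $\GL_n(\Z/\ell)$. The paper explicitly avoids the latter: its definition of central stability, its stabilization recipe $\Stab{V_{n-1}}{V_n}$, and all the representation theory in \S\ref{section:spechtstability}--\ref{section:centraltospecht} are built on Specht modules for $S_n$. There is no notion of ``central stability in the category of $\GL_n(\Z/\ell)$-representations'' anywhere in the paper, and no theory of polynomial functors of $\GL_n$ over $\Z/\ell$ is invoked.

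This affects nearly every step you outline. The simplicial complex used is $\Split_n(R,q)$, a complex of split partial bases of $R^n$ congruent to standard basis vectors modulo $q$, not a complex of partial bases of $(\Z/\ell)^n$. More importantly, the large-characteristic hypothesis $\Char(\Field)\geq P_k$ has nothing to do with the order of $\GL_n(\Z/\ell)$; it is there so that the representation theory of $S_n$ in the relevant range is governed by Specht modules (this is why $P_k$ depends only on $k$ and not on $\ell$). Your proposed mechanism for extracting polynomiality---writing $V_n$ as a polynomial functor applied to $(\Z/\ell)^n\otimes\Field$ and cancelling exponential terms---would require exactly the kind of modular $\GL_n(\Z/\ell)$ representation theory the paper is designed to bypass.

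The actual route is: Theorem~\ref{maintheorem:congruencesl} gives central stability of $\{\HH_k(\SL_n(\Z,\ell);\Field)\}$ as $S_n$-representations; Theorem~\ref{maintheorem:centraltospecht} upgrades this to Specht stability; and then \S\ref{section:polynomial} shows directly that $\dim S^{\stab^j(\mu)}(\Field)$ is a polynomial in $j$ by a bijective counting of standard tableaux. Finite dimensionality (needed to rule out the first alternative in Theorem~\ref{maintheorem:polynomial}) is supplied by Borel--Serre, not by any characteristic assumption.
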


\noindent
This generalizes Lee and Szczarba's theorem, which says that we can take $\phi(n)=n^2-1$ if $k=1$ and
$\Char(\Field)$ divides $\ell$.  Bounds for the constant $P_k$ can be easily extracted from
our results; see below.

\begin{remark}
Our restriction on $\Char(\Field)$ depends only on $k$, not on $\ell$, so Theorem \ref{maintheorem:slnzlpoly}
can be applied in situations where $\Char(\Field)$ divides $\ell$.
We conjecture that the restrictions on $\Char(\Field)$ in Theorem \ref{maintheorem:slnzlpoly} and
in Theorem \ref{maintheorem:congruence} below are unnecessary.
\end{remark}

\paragraph{More general rings.}
In fact, our techniques give results about congruence subgroups of $\GL_n(R)$
for very general rings $R$.  It suffices for $R$ to
be a commutative Noetherian ring of finite Krull dimension.  This includes rings of integers in
algebraic number fields, but it also includes more exotic rings like $\Z[x_1,\ldots,x_m]$ and
$\Field_p[t]$.  For a ring $R$ of this type, van der Kallen \cite{vanDerKallen} proved that
$\HH_k(\GL_n(R);\Z)$ stabilizes as $n$ increases.  Moreover, 
Charney \cite{CharneyCongruence} generalized Borel's theorem to prove that 
the $\Q$-homology groups of {\em finite-index} congruence subgroups of $\GL_n(R)$ stabilize.
However, there also exist infinite-index congruence subgroups.  Their homology groups can
display interesting phenomena even over $\Q$, and our results cover these cases as well.  There
are essentially no known computations of the homology groups of congruence subgroups of $\GL_n(R)$
for these more general rings.

\paragraph{Group actions.}
The key to understanding the homology groups of congruence subgroups is to observe
that they are not just naked abelian groups, but also representations.
Indeed, $\HH_k(\SL_n(\Z,\ell);\Z)$ is acted upon by
$$\SL_n(\Z/\ell) = \SL_n(\Z) / \SL_n(\Z,\ell).$$
This a general phenomena : if $G$ is a normal subgroup of 
$\Gamma$, then the conjugation action of $\Gamma$ on $G$ induces
an action of $\Gamma / G$ on $\HH_k(G)$.  Here we are using the fact that
the conjugation action of $G$ on itself induces the trivial action on $\HH_k(G)$.  
Lee and Szczarba's theorem actually identifies the $\SL_n(\Z/\ell)$-action
on $\HH_1(\SL_n(\Z,\ell);\Z)$.  They prove that there is an $\SL_n(\Z,\ell)$-equivariant
isomorphism $\HH_1(\SL_n(\Z,\ell);\Z) \cong \SLLie_n(\Z/\ell)$, where $\SLLie_n(\Z/\ell)$
is the abelian group of $n \times n$ matrices over $\Z/\ell$ with trace $0$ and
$\SL_n(\Z/\ell)$ acts on $\SLLie_n(\Z/\ell)$ by conjugation.

\paragraph{Representation stability.}
In summary, $\HH_1(\SL_n(\Z,\ell);\Z)$ does not stabilize as an abelian group,
but in some sense it stabilizes as a representation.  In
\cite{ChurchFarbStability}, Church and Farb introduced the notion of
{\em representation stability} to make this observation and others like it
precise.  The basic idea is to give a ``stabilization recipe'' for producing the next
representation in a sequence from the previous ones.  Church and Farb
proposed stabilization recipes for many different kinds of representations
and proved that many natural sequences of representations obeyed their rules.

\paragraph{New machine.}
There is a well-known machine of Quillen which has been used by many people to prove ordinary homological stability
for different kinds of groups (see, e.g.,\ \cite{HatcherWahl}).
We construct a version of this machine for representation stability.  This involves a delicate interplay between 
equivariant homology and representation theory (in both characteristic $0$ and $p$).  The
key difficulty is that we need a much stronger inductive hypothesis to make the
proof work than is provided by representation stability (which, in particular, is 
not functorial in any natural sense).  

The solution to this is the new notion of {\em central stability}.
This is defined in terms of a representation-theoretic universal property.
In many ways, its definition is simpler than Church and Farb's definition of 
representation stability, which is defined in terms of the structure
of the irreducible representations of the groups in question.  Nonetheless,
it gives much tighter control over the representation theory than does representation stability.
In Theorem \ref{maintheorem:centraltospecht} below, we will prove that for finite-dimensional
representations over a field of characteristic $0$, central stability implies representation stability.

\begin{remark}
It is not obvious from its definition that central stability provides as much
control over the representations in question as it does.  Indeed, nearly
half of this paper is devoted to proving a certain representation-theoretic 
``regularity'' theorem concerning central stability; see 
Proposition \ref{proposition:exactness}.
\end{remark}

Our machine can be applied in other contexts too.  For instance, 
in \cite{PutmanModStability}, the author proves that the homology groups of mapping class groups of
arbitrary connected manifolds $M^n$ ($n \geq 2$) 
with marked points and nonempty boundary satisfy central stability.  We
emphasize that these manifolds are completely general -- they are not assumed to
be compact or even of finite type.  

\begin{remark}
For representations over $\Sp_{2g}(\Z)$, some of the ideas in the theory of representation stability are also contained in unpublished work of
Hain on the cohomology of the Torelli group from the 1990's.
\end{remark}

\begin{remark}
After a draft of this paper was circulated, we learned that Church, Ellenberg, and Farb
\cite{ChurchEllenbergFarb} have developed a theory of what they call {\em FI-modules},
which at least in characteristic $0$ seem to be closely related to the notion of central stability.
\end{remark}

\paragraph{Representation stability \`{a} la Church--Farb.}
Before stating our theorems, we need to give a precise definition of central
stability.  The definition of representation stability 
introduced by Church and Farb suffers from three defects.
\begin{itemize}
\item It is only appropriate for finite-dimensional representations over
a field of characteristic $0$.
\item It is a bit ad-hoc, and requires a ``consistent naming scheme''
for the irreducible representations.
\item It does not pin down the maps between the representations in a sequence.
\end{itemize}
Central stability overcomes these difficulties; in particular, its definition
makes no reference to the characteristic of the field or the dimensions of
the representation.

\paragraph{Central stability, motivation.}
Let us return to the example of $\SL_n(\Z,\ell)$.  Fixing a field $\Field$ and some $k \geq 1$, set
$V_n = \HH_k(\SL_n(\Z,\ell);\Field)$.  We will view $V_n$ as a representation of the symmetric group
$S_n$, which acts on $V_n$ via the conjugation action of permutation matrices on $\SL_n(Z,\ell)$.

\begin{remark}
Of course, one would ideally want a description of $\HH_k(\SL_n(\Z,\ell);\Field)$ as
a representation of the group $\SL_n(\Z/\ell)$; however, especially in finite
characteristic the representation theory of $\SL_n(\Z/\ell)$ is extremely complicated
and difficult to work with.
\end{remark}

How should we expect the $S_{n+1}$-representation $V_{n+1}$
to be constructed from the $S_n$-representation $V_n$?  A first guess is that $V_{n+1}$ is
the induced representation $\Ind_{S_n}^{S_{n+1}} V_n$.  Unfortunately, this cannot be the case.  Let
$P \in \GL_{n+1}(\Z)$ be the permutation matrix corresponding to the transposition $(n,n+1) \in S_{n+1}$.
We then have $P \phi P^{-1} = \phi$ for all $\phi \in \SL_{n-1}(\Z,\ell) \subset \SL_{n+1}(\Z,\ell)$.  This
implies that $P$ must act trivially on the image of $V_{n-1}$ in $V_{n+1}$.  In general,
this will not be the case in the induced representation.

\paragraph{Central stabilization.}
It turns out that in a stable range, this is all that goes wrong.  To formalize this,
we now introduce our ``stabilization recipe''.  Let $\phi_{n-1} : V_{n-1} \rightarrow V_n$ 
be an $S_{n-1}$-equivariant map from a representation of $S_{n-1}$ to a representation
of $S_n$.  The {\em central stabilization} of $\phi_{n-1}$, 
denoted $\Stabb{V_{n-1}}{V_n}{\phi_{n-1}}$, is the $S_{n+1}$-representation
which is the largest quotient of $\Ind_{S_n}^{S_{n+1}} V_n$ such that $(n,n+1)$ acts
trivially on the image of $V_{n-1}$.  More precisely, let
$W = \Ind_{S_n}^{S_{n+1}} V_n$.  Composing $\phi_{n-1}$ with the natural inclusion $V_n \hookrightarrow W$,
we obtain an $S_{n-1}$-equivariant map $\phi'_{n-1} : V_{n-1} \rightarrow W$.  Then
$\Stabb{V_{n-1}}{V_n}{\phi_{n-1}} = W / U$, where $U$ is the span
of the $S_{n+1}$-orbit of the set
$$\Set{$\vec{v} - (n,n+1) \cdot \vec{v}$}{$\vec{v} = \phi'_{n-1}(\vec{v}')$ for some $\vec{v}' \in V_{n-1}$}.$$
Observe that there is a natural $S_n$-equivariant map $V_n \rightarrow \Stabb{V_{n-1}}{V_n}{\phi_{n-1}}$.

\paragraph{Examples.}
Here are some examples to convince the reader that this is a natural concept.

\begin{example}
For $n \geq 1$, let $\Trivial{n} \cong \Field$ be the trivial $S_n$-representation.  Then
$\Stab{\Trivial{n-1}}{\Trivial{n}} = \Trivial{n+1}$.  Indeed, $W = \Ind_{S_n}^{S_{n+1}} \Trivial{n}$ is the
{\em permutation representation} $\mathcal{P}_{n+1}$, i.e.\
the vector space consisting of $\Field$-linear combinations of formal symbols 
$\Set{$[i]$}{$1 \leq i \leq n+1$}$.  The group
$S_{n+1}$ acts on $\mathcal{P}_{n+1}$ in the obvious way.  The image of $\Trivial{n-1}$ in $W$ is
the span of $[n+1]$.  Defining $U \subset W$ to be the span of
$$S_{n+1} \cdot \{[n+1]-[n]\} = \Set{$[i]-[j]$}{$1 \leq i,j \leq n+1$ distinct},$$
we have $\Stab{\Trivial{n-1}}{\Trivial{n}} = W/U = \Trivial{n+1}$.
\end{example}

\begin{example}
For $n \geq 2$, we have $\Stab{\mathcal{P}_{n-1}}{\mathcal{P}_n} = \mathcal{P}_{n+1}$.
Indeed, $W = \Ind_{S_n}^{S_{n+1}} \mathcal{P}_n$ is the vector space consisting
of $\Field$-linear combinations of formal symbols
$\Set{$[i,j]$}{$1 \leq i,j \leq n+1$, $i \neq j$}$
and the obvious $S_{n+1}$-action.  The image of $\mathcal{P}_{n-1}$ in $W$ is spanned by
$\Set{$[i,n+1]$}{$1 \leq i \leq n-1$}$.  Defining $U \subset W$ to be the span of
$$S_{n+1} \cdot \Set{$[i,n+1] - [i,n]$}{$1 \leq i \leq n-1$} = \Set{$[i,j]-[i,k]$}{$1 \leq i,j,k \leq n+1$ distinct},$$
we have $\Stab{\mathcal{P}_{n-1}}{\mathcal{P}_n} = W/U = \mathcal{P}_{n+1}$.
\end{example}

\paragraph{Central stability, definition.}
We finally define central stability.
For each $n$, let $V_n$ be a representation of $S_n$ over $\Field$ and
let $\phi_n : V_n \rightarrow V_{n+1}$ be a linear map which is $S_n$-equivariant.
We will call the sequence
$$V_1 \stackrel{\phi_1}{\longrightarrow} V_2 \stackrel{\phi_2}{\longrightarrow} V_3 \stackrel{\phi_3}{\longrightarrow} V_4 \stackrel{\phi_4}{\longrightarrow} \cdots$$
a {\em coherent sequence} of representations of the symmetric group.  We will say that
our coherent sequence is {\em centrally stable} starting at $N \geq 2$ if
for all $n \geq N$, we have $V_{n+1} = \Stabb{V_{n-1}}{V_n}{\phi_{n-1}}$ and $\phi_n$ is the natural map
$V_n \rightarrow \Stabb{V_{n-1}}{V_n}{\phi_{n-1}}$.

\paragraph{Main theorem.}
We now turn to general congruence subgroups.  Let $R$ be a ring with a unit (not necessarily commutative)
and let $q$ be a $2$-sided ideal of $R$.  The {\em level $q$ congruence subgroup} of $\GL_n(R)$,
denoted $\GL_n(R,q)$, is the kernel of the map $\GL_n(R) \rightarrow \GL_n(R/q)$.  The maps
$\GL_n(R) \rightarrow \GL_n(R/q)$ need not be surjective, so there might not exist a $\GL_n(R/q)$-action
on the homology groups of $\GL_n(R,q)$.  However, $S_n$ is embedded in $\GL_n(R)$ as the group
of permutation matrices.  Restricting the conjugation action of $\GL_n(R)$ on $\GL_n(R,q)$ to $S_n$,
we get an action of $S_n$ on $\GL_n(R,q)$ and thus on $\HH_{\ast}(\GL_n(R,q);\Field)$.

We will need to assume that the pair $(R,q)$ satisfies the Bass's {\em stable range condition} $\SR_{d+2}$ 
for some $d \geq 0$ (see \S \ref{section:congruence}).  This condition depends on $q$; however,
Bass also defined a stable range condition $\SR_{d+2}$ for rings $R$ and proved that if $R$ satisfies
$\SR_{d+2}$, then $(R,q)$ satisfies $\SR_{d+2}$ for all $2$-sided ideals $q$.  Almost any reasonable
ring $R$ satisfies $\SR_{d+2}$ for some $d$.  For example, fields satisfy $\SR_2$ and $\Z$ satisfies
$\SR_3$.  More generally, in \cite[Theorem V.3.5]{BassKTheory} Bass proved
that if $R$ is a commutative Noetherian ring with Krull dimension $d$,
then $R$ satisfies $\SR_{d+2}$.

Van der Kallen \cite{vanDerKallen} proved that if $R$ satisfies $\SR_{d+2}$ for
some $d$, then the homology groups of $\GL_n(R)$ are stable in the classical sense.

\begin{maintheorem}
\label{maintheorem:congruence}
Let $R$ be a ring with unit and let $q$ be a $2$-sided ideal of $R$.  Assume that $(R,q)$ satisfies
$\SR_{d+2}$.  Fix $k \geq 1$, and assume that either $\Char(\Field)=0$ or $\Char(\Field) \geq \Bound{(d+8) 2^{k-1}-3}$.  
Then the sequence
$$\HH_k(\GL_1(R,q);\Field) \rightarrow \HH_k(\GL_2(R,q);\Field) \rightarrow \HH_k(\GL_3(R,q);\Field) \rightarrow \cdots$$
of representations of the symmetric group is centrally 
stable with stability beginning at $\Bound{(d+8) 2^{k-1} - 4}$.
\end{maintheorem}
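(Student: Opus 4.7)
The plan is to implement the new ``central stability machine'' advertised in the introduction: adapt the classical Quillen--van der Kallen semisimplicial argument for homological stability, but feed its output into the representation-theoretic regularity result of Proposition \ref{proposition:exactness} rather than into an ordinary scalar stability statement. The proof will proceed by induction on $k$, and at each stage will couple a high-connectivity complex with an exactness statement for induced $S_n$-representations.

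First I would introduce a semisimplicial complex $X_n$ of ``split partial bases'' (or a congruence variant) of $R^n$ on which $\GL_n(R,q) \rtimes S_n$ acts, whose connectivity is controlled by the stable range hypothesis $\SR_{d+2}$. Following van der Kallen, $X_n$ should be roughly $\lfloor(n-d)/2\rfloor$-connected, and the $\GL_n(R,q)$-stabilizer of the standard $p$-simplex should be the smaller congruence subgroup $\GL_{n-p-1}(R,q)$, with the corresponding $S_n$-stabilizer equal to $S_{n-p-1}\times S_{p+1}$ (up to sign contributions from orientations). The equivariant spectral sequence for this action is
$$E^1_{p,q} \;=\; \bigoplus_{\text{orbits}}\HH_q(\text{stabilizer};\Field) \;\Longrightarrow\; \HH_{p+q}(\GL_n(R,q);\Field),$$
which converges to the right-hand side in a range of total degrees set by the connectivity of $X_n$. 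Tracking the $S_n$-action and reorganizing by orbit type identifies the $k$th row of $E^1$, augmented by $\HH_k(\GL_n(R,q);\Field)$, with a ``bar-type'' complex of $S_n$-representations
$$\cdots \to \Ind_{S_{n-2}\times S_2}^{S_n}\!\bigl(\HH_k(\GL_{n-2}(R,q);\Field)\boxtimes \mathrm{sgn}\bigr) \to \Ind_{S_{n-1}}^{S_n}\HH_k(\GL_{n-1}(R,q);\Field) \to \HH_k(\GL_n(R,q);\Field) \to 0.$$

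The representation-theoretic key is that central stability of a coherent sequence is precisely the exactness of the last two terms of a complex of exactly this shape, so the theorem reduces to showing the displayed complex is exact at its last two positions. I would deduce this by induction on $k$: for $k' < k$, the inductive hypothesis gives central stability of $\HH_{k'}(\GL_\bullet(R,q);\Field)$, and Proposition \ref{proposition:exactness} then converts this into exactness of the analogous induced-representation complexes in higher bar-degrees. Substituting these vanishings into the lower rows of $E^1$ and using the spectral sequence in the stable range then forces the top row to be exact at the required positions. The main obstacle will be this final step: it requires identifying the $S_n$-action on every $E^1$-term with the correct sign conventions coming from simplex orientations, and then carefully combining the regularity result with the spectral sequence in an $S_n$-equivariant way. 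The doubling bound $(d+8)2^{k-1}-3$ on $\Char(\Field)$ reflects the fact that each inductive step invokes Proposition \ref{proposition:exactness} once, losing a factor of two in both the available stable range and in the characteristic threshold.
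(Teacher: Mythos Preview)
Your proposal is correct and follows essentially the same approach as the paper: the paper packages the argument by first stating a general ``central stability machine'' (Theorem~\ref{theorem:stabilitymachine}) with abstract hypotheses on a family of complexes $X_n$, verifies those hypotheses for Charney's complex of split partial bases $\Split_n(R,q)$ using her $\tfrac{n-d-3}{2}$-acyclicity result and transitivity on simplices, and then proves the machine via exactly the equivariant spectral sequence and induction on $k$ you describe, with Proposition~\ref{proposition:exactness} supplying the exactness of the lower rows. The one organizational difference is that the paper works with the $E^2$-page (identifying $E^2_{0,k}$ directly with the central stabilization via Lemma~\ref{lemma:centralstab}) rather than phrasing everything at $E^1$, but the content is the same.
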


\begin{remark}
We want to emphasize that in Theorem \ref{maintheorem:congruence} we are {\em not} assuming that
the homology groups of $\GL_n(R,q)$ are finite-dimensional.
\end{remark}

\begin{remark}
In \cite[\S 5.4]{CharneyCongruence}, Charney
gives a number of conditions on $\Field$ and $(R,q)$ which ensure that the groups
$\HH_k(\GL_n(R,q);\Field)$ are stable in the classical sense.  For instance, she proves
that this is true if $R$ satisfies $\SR_{d+2}$ and $R/q$ is finite and $\Char(\Field)=0$.
However, we should emphasize that we are not assuming that $R/q$ is finite, so our
congruence subgroups need not be finite-index and Theorem \ref{maintheorem:congruence} has
content even if $\Char(\Field)=0$.
\end{remark}

If $R$ is a commutative ring, then there is a determinant map $\GL_n(R) \rightarrow R^{\ast}$ and
we can define $\SL_n(R)$ and $\SL_n(R,q)$ in the obvious way.  
In this notation, the congruence subgroup
$\SL_n(\Z,\ell)$ of $\SL_n(\Z)$ is $\SL_n(\Z,\ell \Z)$.  We then have the following.

\begin{maintheorem}
\label{maintheorem:congruencesl}
Let $R$ be a commutative ring with unit and let $q$ be an ideal of $R$.  Assume that $(R,q)$ satisfies
$\SR_{d+2}$.  Fix $k \geq 1$, and assume that either $\Char(\Field)=0$ or $\Char(\Field) \geq \Bound{(d+8) 2^{k-1}-3}$.
Then the sequence
$$\HH_k(\SL_1(R,q);\Field) \rightarrow \HH_k(\SL_2(R,q);\Field) \rightarrow \HH_k(\SL_3(R,q);\Field) \rightarrow \cdots$$
of representations of the symmetric group is centrally
stable with stability beginning at $\Bound{(d+8) 2^{k-1} - 4}$.
\end{maintheorem}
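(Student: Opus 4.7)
The plan is to deduce Theorem~\ref{maintheorem:congruencesl} from Theorem~\ref{maintheorem:congruence} via the determinant fibration. There is a split short exact sequence
$$1 \longrightarrow \SL_n(R,q) \longrightarrow \GL_n(R,q) \xrightarrow{\det} U \longrightarrow 1, \qquad U := R^{\ast} \cap (1+q),$$
with splitting $u \mapsto \mathrm{diag}(u,1,\ldots,1)$. Two features are crucial: the group $U$ does not depend on $n$, and the $S_n$-action on $U$ induced from conjugation on $\GL_n(R,q)$ is trivial, because the determinant is conjugation-invariant. The associated Lyndon--Hochschild--Serre spectral sequence
$$E_2^{p,q} = \HH_p\bigl(U; \HH_q(\SL_n(R,q);\Field)\bigr) \Longrightarrow \HH_{p+q}(\GL_n(R,q);\Field)$$
is therefore $S_n$-equivariant, with the $S_n$-action on $E_2$ determined entirely by the $S_n$-action on $\HH_q(\SL_n(R,q);\Field)$.

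I would then induct on $k$. The base case $k=0$ is trivial. For the inductive step, assume central stability of $\HH_j(\SL_n(R,q);\Field)$ for $j<k$ with the stated bounds. Then the rows $q<k$ of the $E_2$-page are centrally stable, since central stability is preserved under tensoring with $\Field$-vector spaces (such as $\HH_p(U;\Field)$) carrying the trivial $S_n$-action. Theorem~\ref{maintheorem:congruence} gives central stability of the abutment. Using the regularity result of Proposition~\ref{proposition:exactness} to propagate central stability through the differentials and extensions of the spectral sequence, one extracts central stability of the edge piece $E_2^{0,k} = \HH_k(\SL_n(R,q);\Field)_U$, and ultimately of $\HH_k(\SL_n(R,q);\Field)$ itself, within the stable range.

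The hardest step is controlling the $U$-action on $\HH_q(\SL_n(R,q);\Field)$, which is generally nontrivial: conjugation by $\mathrm{diag}(u,1,\ldots,1)$ is not inner in $\SL_n(R,q)$. The factorization $\mathrm{diag}(u,1,\ldots,1) = \mathrm{diag}(u,u^{-1},1,\ldots,1) \cdot \mathrm{diag}(1,u,1,\ldots,1)$, combined with symmetry across coordinates, shows the induced action is $n$-torsion, but this is not enough when $\Char(\Field) \mid n$. The key observation is that the $U$-action commutes with both the $S_n$-action and with the stabilization maps $\SL_n(R,q) \hookrightarrow \SL_{n+1}(R,q)$, so the whole $E_2$-page is a $U$-equivariant coherent sequence of $S_n$-representations. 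Central stability should then propagate through the entire $U$-equivariant data, not merely the $U$-coinvariants, and careful bookkeeping in the spectral sequence should yield Theorem~\ref{maintheorem:congruencesl} with the same bounds as Theorem~\ref{maintheorem:congruence}.
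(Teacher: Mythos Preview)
Your approach is quite different from the paper's, and it has genuine gaps that would require substantial new machinery to close.

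The paper does \emph{not} deduce Theorem~\ref{maintheorem:congruencesl} from Theorem~\ref{maintheorem:congruence}. Instead, it runs the central stability machine (Theorem~\ref{theorem:stabilitymachine}) directly on the groups $G_n=\SL_n(R,q)$ and $\widetilde{G}_n=\SL_n(R,q)\cdot S_n$, acting on the very same complex $\Split_n(R,q)$. Assumptions~1 and~3--5 are identical to the $\GL$ case, and the only ``obvious modification'' needed is for Assumption~2: one must check that $\SL_n(R,q)\cdot S_n$ still acts transitively on $k$-simplices in the relevant range. This holds because, after using the $\GL$-transitivity lemma to move a simplex to the standard one, the resulting matrix can be corrected into $\SL_n(R,q)$ by multiplying by $\mathrm{diag}(1,\ldots,1,u^{-1})$ with $u\in 1+q$, which fixes $\Delta_n$ since there is at least one unused coordinate. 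That is the whole proof.

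Your spectral-sequence route runs into two real obstructions. First, it is an \emph{inverse} spectral-sequence argument: you want to recover the top-row entry $E_2^{0,k}$ from the abutment and the rows $q<k$. Even granting central stability of those inputs, $E_\infty^{0,k}$ is only a subquotient of $\HH_k(\GL_n(R,q);\Field)$, and $E_2^{0,k}$ differs from $E_\infty^{0,k}$ by images of transgressions $d_r$ from lower rows. You would need a theorem of the form ``central stability is closed under extensions and under passage to kernels/cokernels of maps between centrally stable sequences in the stable range,'' and no such result is available in the paper; Proposition~\ref{proposition:exactness} goes in the opposite direction (it uses central stability to produce exactness, not the converse). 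Second, even if you recovered $E_2^{0,k}=\HH_k(\SL_n(R,q);\Field)_U$, you still need the full module, and your own analysis shows the $U$-action need not be trivial. Your observation that the action has order dividing $n$ is correct (since $\mathrm{diag}(u,u^{-1},1,\ldots,1)\in\SL_n(R,q)$ when $u\in 1+q$), but ``order dividing $n$'' does not force triviality, and the final paragraph's appeal to ``$U$-equivariant coherent sequences'' is a hope, not an argument.

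In short, the direct application of the machine is both simpler and complete; your route would require developing a substantial ``exactness'' theory for central stability that the paper does not contain.
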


\noindent
There is a huge literature on the finiteness properties of groups like $\GL_n(R,q)$ for special choices of
$R$ and $q$.  See, for instance, \cite{BuxGramlichWitzel}.  However, we are not aware of any
concrete calculations of even their first homology groups aside from Lee and Szczarba's calculation
of $\HH_1(\SL_n(\Z,\ell);\Z)$.

\paragraph{Central stability implies polynomial dimensions.}
A fundamental insight of Church, Ellenberg, and Farb \cite{ChurchEllenbergFarb} is that there
is a close relationship between a coherent sequence of representations being representation stable 
and the dimension of the $n^{\text{th}}$ term in the sequence being given by a polynomial in
$n$ for $n \gg 0$.  This inspires the following theorem.

\begin{maintheorem}
\label{maintheorem:polynomial}
Let
$$V_1 \longrightarrow V_2 \longrightarrow V_3 \longrightarrow V_4 \longrightarrow \cdots$$
be a coherent sequence of representations of the symmetric group over a field $\Field$
which is centrally stable with stability starting at $N$.  Assume that either $\Char(\Field)=0$
or $\Char(\Field) \geq 2N+2$.  Then one of the following holds.
\begin{itemize}
\item $\Dim_{\Field} V_n = \infty$ for $n \geq 2N+1$.
\item There exists a polynomial $\phi(n)$ such that $\phi(n) = \Dim_{\Field} V_n$ for $n \geq 2N+1$.
\end{itemize}
\end{maintheorem}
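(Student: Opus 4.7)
My strategy is to realize $V$ as the cokernel of a morphism between two ``free'' centrally stable sequences whose term-by-term dimensions are given in closed form, and to extract the polynomial for $\dim V_n$ by subtraction. For an $S_N$-representation $W$, let $M(W)$ denote the centrally stable sequence starting at $N$ with $V_{N-1} = 0$ and $V_N = W$; by the universal property of central stabilization it exists and is unique. Unrolling the recursion $V_{n+1} = \Stabb{V_{n-1}}{V_n}{\phi_{n-1}}$ by induction on $n$ collapses the defining relations cleanly to
\[
M(W)_n \;\cong\; \Ind_{S_N \times S_{n - N}}^{S_n}\!\bigl(W \boxtimes \Trivial{n-N}\bigr) \qquad (n \ge N),
\]
so $\dim M(W)_n = \binom{n}{N}\dim W$ --- a polynomial in $n$ of degree $N$ when $\dim W < \infty$, and identically infinite otherwise.

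For the given centrally stable sequence $V$, the identity map $V_N \to V_N$ extends, by the universal property, to a surjection $\pi : M(V_N) \twoheadrightarrow V$ of centrally stable sequences in degrees $\ge N$. The crucial claim is that $\ker(\pi)$ is itself centrally stable, generated in degree $N+1$ by an $S_{N+1}$-representation $U_1$ built explicitly from $\phi_{N-1}(V_{N-1}) \subseteq V_N$. This yields a two-step free presentation
\[
M(U_1)\;\longrightarrow\; M(V_N)\;\longrightarrow\; V\;\longrightarrow\; 0
\]
in the stable range $n \ge 2N + 1$. Because $\dim U_1$ is bounded by $\dim V_N$ and $\dim V_{N-1}$, the two cases $\dim V_N < \infty$ and $\dim V_N = \infty$ respectively yield the two horns of the dichotomy: in the finite case one reads off $\dim V_n = \binom{n}{N}\dim V_N - \binom{n}{N+1}\dim U_1$, a polynomial in $n$; in the infinite case the surjection $\pi$ together with the bound $\dim V_{n+1} \le (n+1)\dim V_n$ forces $\dim V_n = \infty$ throughout the stable range.

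The main obstacle is showing that $\ker(\pi)$ is itself centrally stable and generated no later than degree $N + 1$. This is exactly the kind of regularity statement flagged in the introduction (the analogue of Proposition~\ref{proposition:exactness}), and is the hard step: an unexpected generator of $\ker(\pi)$ in some degree $n > N+1$ would completely derail the polynomial formula. The characteristic hypothesis $\Char(\Field) \ge 2N + 2$ is needed precisely here, since it guarantees that $\Field[S_n]$ is semisimple for every $n \le 2N + 1$, and this semisimplicity is what lets one compute kernels of morphisms of centrally stable sequences degree-by-degree within the stable range without new relations intruding.
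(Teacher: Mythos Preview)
Your crucial claim---that $\ker(\pi)$ is centrally stable starting at $N+1$, hence (since $\ker(\pi)_N=0$) equal to the free object $M(U_1)$---is false, and with it the dimension formula $\dim V_n=\binom{n}{N}\dim V_N-\binom{n}{N+1}\dim U_1$. Take $N=2$ and the sequence $V_n=\Trivial{n}$, which the paper shows is centrally stable starting at $2$. Your own computation gives $\dim M(\Trivial{2})_n=\binom{n}{2}$, so $\dim\ker(\pi)_n=\binom{n}{2}-1$. Then $U_1=\ker(\pi)_3$ has dimension $2$, and at $n=5=2N+1$ your formula gives $\binom{5}{2}-2\binom{5}{3}=10-20=-10$ rather than $1$. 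More directly, $\Stab{\ker(\pi)_2}{\ker(\pi)_3}=\Stab{0}{\ker(\pi)_3}=\Ind_{S_3}^{S_4}\ker(\pi)_3$ has dimension $8$, whereas $\dim\ker(\pi)_4=5$; so $\ker(\pi)$ is \emph{not} centrally stable at $N+1$. The conceptual point is that $\Stab{-}{-}$ is a cokernel, hence right exact but not left exact: kernels of surjections between centrally stable sequences need not be centrally stable at the same index. In FI-module language the objects $M(W)$ have infinite projective dimension, so a two-step resolution cannot suffice, and iterating your construction yields an infinite resolution from which no polynomial is visible. Proposition~\ref{proposition:exactness} does not rescue this: it asserts exactness of the central stability chain complex for a single centrally stable sequence, not preservation of central stability under kernels.

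The paper's argument is entirely different. It first invokes Theorem~\ref{maintheorem:centraltospecht} to upgrade central stability to \emph{Specht stability} starting at $2N+1$: each $V_n$ acquires a filtration whose graded pieces are Specht modules, and the maps $V_n\to V_{n+1}$ send each graded piece $S^{\mu}(\Field)$ to $S^{\stab(\mu)}(\Field)$ by the stabilization map. The dichotomy between the two bullets is then whether the (common) index set of these filtrations is finite or infinite. In the finite case it remains only to show that $k\mapsto\dim S^{\stab^k(\mu)}(\Field)$ is a polynomial for each fixed $\mu$, and this is done by an explicit bijection on standard tableaux, yielding $\dim S^{\stab^k(\mu)}(\Field)=\sum_{t\in\mathrm{ST}(\mu)}\binom{\,n-\mathrm{UR}(t)+k\,}{k}$.
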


\noindent
Theorem \ref{maintheorem:polynomial} can be combined with Theorem 
\ref{maintheorem:congruence}--\ref{maintheorem:congruencesl}
to deduce theorems analogous to Theorem \ref{maintheorem:slnzlpoly} for the homology groups
of $\GL_n(R,q)$ and $\SL_n(R,q)$.  These results must allow for the
possibility that the dimensions of the relevant homology groups are infinite (as in the
first possibility in Theorem \ref{maintheorem:polynomial}).
Theorem \ref{maintheorem:slnzlpoly} does not allow for this possibility; in fact,
Borel and Serre \cite{BorelSerre} proved that $\HH_k(\SL_n(\Z,\ell);\Field)$ is always
finite-dimensional.

\paragraph{Specht stability.}
To relate central stability to the fine structure of the representation theory
of $S_n$ (and thus to Church and Farb's notion of representation stability), we will
give in \S \ref{section:spechtstability} below a definition of what we call {\em Specht stability}.
For finite-dimensional representations over a field of characteristic $0$, Specht
stability is a strengthening of Church and Farb's notion of representation stability.  This
definition is related to (and implies) Church's notation of {\em monotonicity} for stable
representations, which he defined in \cite{ChurchConfiguration}.  We will prove the following
theorem, which shows that sequences of representations which are centrally stable
are also Specht stable, and thus also monotone in the sense of Church and representation
stable in the sense of Church-Farb.

\begin{maintheorem}
\label{maintheorem:centraltospecht}
Let
$$V_1 \longrightarrow V_2 \longrightarrow V_3 \longrightarrow V_4 \longrightarrow \cdots$$
be a coherent sequence of representations of the symmetric group over a field $\Field$ 
which is centrally stable with stability starting at $N$.  Assume that either $\Char(\Field)=0$
or $\Char(\Field) \geq 2N+2$.  Then the sequence is Specht stable with stability
starting at $2N+1$.
\end{maintheorem}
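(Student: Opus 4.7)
The plan is to show that for $n \geq 2N+1$, the representation $V_n$ decomposes as a direct sum of ``padded Specht modules'' $S^{\lambda[n]} = S^{(n - |\lambda|,\, \lambda_1,\, \lambda_2,\, \ldots)}$, with multiplicities $c_\lambda$ independent of $n$, and with the maps $\phi_n$ identified on each summand with the canonical (up to scalar) $S_n$-equivariant inclusion $S^{\lambda[n]} \hookrightarrow S^{\lambda[n+1]}$. This explicit stable form is what Specht stability, defined in \S\ref{section:spechtstability}, should require. Since the sequence is centrally stable starting at $N$ one has $V_{n+1} = \Stab{V_{n-1}}{V_n}$ for $n \geq N$, so it suffices to analyze how a single central stabilization step interacts with Specht decompositions and to verify a suitable base case at $n = 2N+1$.

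The core combinatorial step is a Key Lemma: if the map $V_{n-1} \to V_n$ has the stable form $\bigoplus c_\lambda S^{\lambda[n-1]} \to \bigoplus c_\lambda S^{\lambda[n]}$ with each summand the canonical inclusion, then $\Stab{V_{n-1}}{V_n}$ is canonically isomorphic to $\bigoplus c_\lambda S^{\lambda[n+1]}$. Pieri's rule gives the decomposition $\Ind_{S_n}^{S_{n+1}} S^{\lambda[n]} \cong \bigoplus_{\mu'} S^{\mu'}$, where $\mu'$ ranges over partitions of $n+1$ obtained from $\lambda[n]$ by adding a single box. I would then show that the $S_{n+1}$-span of the relation vectors $v - (n,n+1) v$, with $v$ in the image of $V_{n-1}$, annihilates exactly the Specht summands $S^{\mu'}$ for which the added box lies below the first row, leaving only the summand $S^{\lambda[n+1]}$ corresponding to adding a box to row one. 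This identification is the most delicate combinatorial calculation of the proof.

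To handle the base case, I would track the Specht decompositions through the $N+1$ iterations of central stabilization that produce $V_{2N+1}$ from the initial data $V_{N-1}, V_N$. The threshold $2N+1$ arises because, in each central stabilization step, the quotient relations should prevent the ``type size'' $|\lambda|$ of the partitions appearing in $V_n$ from exceeding the bound $N$ inherited from $V_N$; once $n \geq 2N+1 \geq |\lambda| + \lambda_1$, every such partition is expressible as $\lambda[n]$ for a valid $\lambda$ in the padded-Specht sense. Thus by the time one reaches $V_{2N+1}$ the decomposition must be in padded form, after which the Key Lemma propagates this form to all $n \geq 2N+1$.

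The main obstacle is the Key Lemma together with the type-size bound through iterated central stabilization, both of which require a careful analysis of how the quotient relations interact with the Pieri decomposition. The characteristic hypothesis $\Char(\Field) \geq 2N+2$ is essential here: for partitions of size at most $2N+2$, it forces the relevant Specht modules to behave as in characteristic zero, so Pieri's rule yields a genuine direct-sum decomposition (rather than only a Specht filtration), Hom-spaces between Specht modules have their characteristic-zero dimensions, and the relevant subcategory of $S_n$-representations is semisimple. Without such a bound, Specht modules can fail to be irreducible and the clean identification of the kernel of the central stabilization quotient would break down, so ensuring that the argument stays within the ``generic'' regime for Specht combinatorics is where most of the care is required.
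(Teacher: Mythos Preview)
Your Key Lemma is correct and corresponds to Corollary \ref{corollary:stabilizespecht} (derived from Proposition \ref{proposition:spechtresolution}), and your type-size bound is Lemma \ref{lemma:widthlowerbound}. However, there is a genuine gap in your base case. Knowing that $V_{2N+1}$ decomposes as $\bigoplus c_\lambda S^{\lambda[2N+1]}$ does \emph{not} tell you that the map $\phi_{2N}\colon V_{2N} \to V_{2N+1}$ is a stabilization map, and your Key Lemma requires precisely this hypothesis on the map, not merely on the target decomposition. Even with both $V_{2N}$ and $V_{2N+1}$ in padded form, the restriction rule allows nonzero $S_{2N}$-equivariant maps $S^{\lambda[2N]} \to S^{\mu[2N+1]}$ with $\mu \neq \lambda$, so $\phi_{2N}$ is not forced to be diagonal with respect to the padded decompositions. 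Your plan to ``track the Specht decompositions through the $N+1$ iterations'' does not address how the \emph{maps} acquire stable form, so the induction never starts.

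The paper resolves this with an iterated filtration argument (Lemma \ref{lemma:firstrow}) rather than a base-case-plus-induction on $n$. One peels off the maximal-width summands of $V_n$ simultaneously for all $n$: with $W_n^1 \subset V_n$ the span of Specht modules of submaximal width, one shows (i) the induced maps $V_n/W_n^1 \to V_{n+1}/W_{n+1}^1$ on constant-width quotients are stabilization maps, and (ii) the remainder sequence $\{W_n^1\}$ is again a quotiented central stabilization sequence, so the process iterates. Point (ii) is the delicate step; it requires a diagram chase using exactness of the central stability chain complex for the already-stabilized constant-width quotients (this is where your Key Lemma actually enters). After $N+1$ iterations the bottom layer vanishes by the width bound, and the resulting filtration witnesses Specht stability. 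The induction here runs over the width index rather than over $n$; this is the structural idea your proposal is missing.
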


One special case of Theorem \ref{maintheorem:centraltospecht} appears in the literature.  Assume
that $\Char(\Field)=0$.  Fix some $N \geq 2$ and let $V_N$ be a finite-dimensional representation
of $S_N$.  Recalling that $\Trivial{k}$ is the trivial representation of $S_k$, for $n \geq N$ define
$V_n = \Ind_{S_N \times S_{n-N}}^{S_n} V_N \boxtimes \Trivial{n-N}$.  Here $V_N \boxtimes \Trivial{n-N}$ is
the external tensor product of the $S_N$-representation $V_N$ and the $S_{n-N}$-representation $\Trivial{n-N}$, i.e.\ the
$S_N \times S_{n-N}$-representation whose underlying vector space is $V_N \otimes_{\Field} \Trivial{n-N}$ and
where $(g,h) \in S_N \times S_{n-N}$ acts on $v \otimes w \in V_N \otimes_{\Field} \Trivial{n-N}$ via
$(g,h)(v \otimes w) = (g v) \otimes (h w)$.
There are natural maps
$V_n \hookrightarrow V_{n+1}$, and it is easy to see that the sequence
$$0 \longrightarrow \cdots \longrightarrow 0 \longrightarrow V_N \longrightarrow V_{N+1} \longrightarrow V_{N+2} \longrightarrow \cdots$$
is centrally stable starting at $N$.  Hemmer \cite{HemmerStability} proved that this sequence
is representation stable in the sense of Church-Farb starting at $2N$
and Church \cite{ChurchConfiguration} proved that it is monotone starting at $N$.  There is
also an alternate proof of both of these results due to Sam-Weyman \cite{SamWeyman}.

\paragraph{Outline of paper.}
We begin in \S \ref{section:stabilitymachine} by describing our machine for proving central stability.
This is followed by \S \ref{section:congruence}, which shows how to apply this machine to congruence
subgroups and prove Theorems \ref{maintheorem:congruence} and \ref{maintheorem:congruencesl}.  Next,
in \S \ref{section:chaincomplex} we construct the {\em central stability chain complex}, which is a
technical tool needed for our machine.  In \S \ref{section:machinetheory}, we prove that our machine works.  This proof depends
on a proposition about the central stability chain complex.  This proposition is proven in \S \ref{section:spechtstability},
which also defines Specht stability.  This proof depends on Theorem \ref{maintheorem:centraltospecht},
which is proven in \S \ref{section:centraltospecht}.  Finally, in \S \ref{section:polynomial}
we prove Theorem \ref{maintheorem:polynomial}.

\paragraph{Acknowledgments.}
I wish to thank Ruth Charney, Jordan Ellenberg, Benson Farb, Oscar Randal-Williams,
and Ben Webster for their help.  I want to offer special thanks to Tom Church for pointing
out an error in a previous version of this paper and for his help in figuring
out how to patch it.

\section{Description of central stability machine}
\label{section:stabilitymachine}

We now describe our machine for proving central stability.  This machine 
is similar to the classical homological stability machine as
described in, for example, \cite[\S 5]{HatcherWahl} (which we recommend reading as motivation).

Fix a field $\Field$.  Assume that we are given groups $\{G_n\}_{n=1}^{\infty}$ and
$\{\widetilde{G}_n\}_{n=1}^{\infty}$ together with splittings $\widetilde{G}_n = G_n \rtimes S_n$.  Moreover,
assume that we are given inclusions $G_{n} \hookrightarrow G_{n+1}$ and $\widetilde{G}_n \hookrightarrow \widetilde{G}_{n+1}$
for all $n$ which fit into a commutative diagram of the form
$$\xymatrixrowsep{0.8pc}\xymatrixcolsep{2pc}\xymatrix{
              & 1 \ar[d]              & 1 \ar[d]          & 1 \ar[d]              & \\
\cdots \ar[r] & G_{n-1} \ar[d] \ar[r] & G_n \ar[d] \ar[r] & G_{n+1} \ar[d] \ar[r] & \cdots \\
\cdots \ar[r] & \widetilde{G}_{n-1} \ar[d] \ar[r] & \widetilde{G}_n \ar[d] \ar[r] & \widetilde{G}_{n+1} \ar[d] \ar[r] & \cdots \\
\cdots \ar[r] & S_{n-1} \ar[d] \ar[r] & S_n \ar[d] \ar[r] & S_{n+1} \ar[d] \ar[r] & \cdots \\
	      & 1              & 1          & 1        & }$$
with the maps $S_{n} \rightarrow S_{n+1}$ the standard inclusions.  The conjugation action of $\widetilde{G}_n$
on its normal subgroup $G_n$ induces an action of $S_n$ on $\HH_{k}(G_n;\Field)$ for
all $k \geq 0$.  
Our goal is to prove that the coherent sequence
$$\HH_k(G_1;\Field) \longrightarrow \HH_k(G_2;\Field) \longrightarrow \HH_k(G_3;\Field) \longrightarrow \cdots$$
of representations of the symmetric group is centrally stable.

Before describing the inputs to our machine, we will need a definition.  In this paper, all actions of groups
on simplicial complexes are assumed to be simplicial.

\begin{definition}
A group $G$ acts on a simplicial complex $X$ {\em nicely} if it satisfies the following condition.
Consider two vertices $w$ and $w'$ of $X$ which are joined by an edge.  Then there
does not exist any $g \in G$ such that $g \cdot w = w'$.
\end{definition}

\begin{remark}
If $G$ acts nicely on a simplicial complex $X$, then $X/G$ can be equipped with the structure
of a cell complex whose $\ell$-cells are the $G$-orbits of $\ell$-cells in $X$.  We remark that this
might not be a simplicial complex structure since there might be multiple cells with the same set
of vertices (it is what we will call a {\em weak simplicial complex} in \S \ref{section:coefficients}
below).
\end{remark}

\noindent
For each $n$, our machine will require the following inputs.
\begin{itemize}
\item A simplicial complex $X_n$ such that $G_n$ acts nicely on $X_n$.  Also, this action
should extend to a (not necessarily nice) action of $\widetilde{G}_n$ on $X_n$.
\item An $(n-1)$-simplex $\Delta_n$ of $X_n$ and an enumeration $\{v_1^n,\ldots,v_n^n\}$ of
the vertices of $\Delta_n$.
\end{itemize}
Of course, we will require these inputs to satisfy a sequence of conditions.
First, we will need the $X_n$ to be highly connected so that we can use these actions to 
calculate the homology groups of $G_n$.  In fact, we can get away with assuming that the $X_n$
are highly acyclic.  Recall that a space $Y$ is {\em $k$-acyclic}
if $\RH_q(Y;\Z) = 0$ for $0 \leq q \leq k$.  We make the following assumption about the $X_n$.

\begin{assumption}
\label{assumption:highlyacyclic}
For some constant $C \geq 1$, for all $k \geq 1$ the space $X_n$ is $k$-acyclic for $n \geq \Bound{C 2^{k-1} - 3}$.
\end{assumption}

\noindent
Next, we will need $\Delta_n$ to be a strict fundamental domain for the action, at least
in a stable range.  More precisely, we need the following.

\begin{assumption}
\label{assumption:quotient}
For $k \geq 1$ and $n \geq \Bound{C 2^{k-1}-3}$, the $G_n$-orbit of every simplex in 
the $(k+2)$-skeleton of $X_n$ contains a simplex of $\Delta_n$.  Here $C$ is the same constant as in Assumption
\ref{assumption:highlyacyclic}.
\end{assumption}

\begin{remark}
The niceness of the $G_n$-action on $X_n$ ensures that no two simplices of $\Delta_n$ are in
the same $G_n$-orbit.
\end{remark}

\noindent
We need the following assumption on the stabilizers of this action.

\begin{assumption}
\label{assumption:stabilizer}
For $0 \leq i \leq n-2$, the stabilizer in $G_n$ of the simplex $\{v_{n-i}^n,\ldots,v_n^n\}$ 
is $G_{n-i-1}$.
\end{assumption}

\noindent
Our final two assumptions concern the extension of the action to $\widetilde{G}_n$, and in particular
the action of $S_n \subset \widetilde{G}_n$.

\begin{assumption}
\label{assumption:symmetric}
The action of $S_n$ preserves the set $\{v_1^n,\ldots,v_n^n\}$.  Moreover,
the action of $S_n$ on this set is the usual permutation action.
\end{assumption}

\begin{remark}
Assumptions \ref{assumption:quotient} and \ref{assumption:symmetric} together imply that
$\widetilde{G}_n$ acts transitively on $k+2$-simplices for $n \geq \Bound{C 2^{k-1}-2}$.  
\end{remark}

\begin{assumption}
\label{assumption:centralizer}
Consider any $2 \leq m \leq n$, and denote by $S_{\{m,\ldots,n\}} \subset S_n$ the symmetric group
on the set $\{m,\ldots,n\}$.  Then $S_{\{m,\ldots,n\}}$ lies in the centralizer of $G_{m-1} \subset G_n$.
\end{assumption}

\noindent
With these assumptions, our theorem is as follows.  Its proof is in \S \ref{section:machinetheory}.

\begin{theorem}
\label{theorem:stabilitymachine}
Let the notation and assumptions be as above, and fix some $k \geq 1$.  Assume that
either $\Char(\Field) = 0$ or $\Char(\Field) \geq \Bound{C 2^{k-1} - 3}$.  Then the sequence
$\{\HH_k(G_n;\Field)\}_{n=1}^{\infty}$ of representations of the symmetric group
is centrally stable with stability starting at $\Bound{C 2^{k-1} - 4}$.
\end{theorem}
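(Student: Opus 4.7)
The plan is to run a Quillen-type equivariant spectral sequence argument on the $G_n$-action on the highly acyclic complex $X_n$, but tracked equivariantly for the extended $\widetilde{G}_n$-action, so that the output is central stability of the $S_n$-representations $\HH_k(G_n;\Field)$ rather than merely classical homological stability. The argument proceeds by induction on $k$, with the inductive hypothesis channeled through Theorem \ref{maintheorem:centraltospecht} and Proposition \ref{proposition:exactness} to control the representation theory at each step.

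For each $n$, I would use the equivariant homology spectral sequence for the action of $G_n$ on $X_n$; by Assumption \ref{assumption:highlyacyclic}, this spectral sequence converges to $\HH_*(G_n;\Field)$ in total degrees $\leq k$ once $n$ is in the stable range. The niceness of the action, together with the fundamental-domain property (Assumption \ref{assumption:quotient}) and the stabilizer identification (Assumption \ref{assumption:stabilizer}), identifies the $E^1$-page in this range as a chain complex whose $p$-column corresponds to the codimension-$p$ face $\{v^n_{n-p}, \ldots, v^n_n\}$ of $\Delta_n$ and, via Shapiro's lemma, has $q$-th entry $\HH_q(G_{n-p-1};\Field)$. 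The $d^1$-differential is induced by the face maps of $\Delta_n$ combined with the stabilizations $G_{n-p-1}\hookrightarrow G_{n-p}$.

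To promote this to an $S_n$-equivariant picture, I would use the extension of the action to $\widetilde{G}_n$. Assumption \ref{assumption:symmetric} identifies the setwise $S_n$-stabilizer of the $p$-face $\{v^n_{n-p},\ldots,v^n_n\}$ as $S_{n-p-1} \times S_{p+1}$, and Assumption \ref{assumption:centralizer} guarantees that the factor $S_{p+1}$ acts trivially on $\HH_q(G_{n-p-1};\Field)$. Consequently, the $p$-column of the $E^1$-page becomes the induced representation $\Ind_{S_{n-p-1}\times S_{p+1}}^{S_n}(\HH_q(G_{n-p-1};\Field)\boxtimes \Trivial{p+1})$, and the $d^1$-differential assembles into precisely the central stability chain complex constructed in \S\ref{section:chaincomplex}.

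The induction then runs as follows. Assume central stability of $\HH_j(G_*;\Field)$ has been established for $j < k$ in the relevant range. Theorem \ref{maintheorem:centraltospecht} upgrades this to Specht stability, which is the input that Proposition \ref{proposition:exactness} requires in order to force the central stability chain complex with coefficients $\HH_j$ to be exact away from its top term. Combined with the acyclicity of $X_n$, which makes the spectral sequence collapse in the relevant corner, this exactness lets one read off from the $E^1$-page the identification $\HH_k(G_{n+1};\Field) \cong \Stabb{\HH_k(G_{n-1};\Field)}{\HH_k(G_n;\Field)}{\phi_{n-1}}$, which is central stability for $\HH_k$. The main obstacle is Proposition \ref{proposition:exactness} itself: it is a delicate representation-theoretic regularity statement that requires the Specht stability detour and the positive-characteristic hypothesis, and the loss of roughly one additional column of required exactness at each inductive step is what produces the doubling factor $2^{k-1}$ in the stability bound.
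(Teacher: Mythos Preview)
Your overall strategy matches the paper's proof closely: both run the equivariant spectral sequence for the $G_{n+1}$-action on $X_{n+1}$, identify the $E^1$ (or chain-level) terms with induced representations via Assumptions \ref{assumption:quotient}--\ref{assumption:centralizer}, recognize the resulting differential as the central stability chain complex of \S\ref{section:chaincomplex}, and then invoke the inductive hypothesis together with Proposition \ref{proposition:exactness} to kill the interior of the $E^2$-page. The paper's Claims 1--4 correspond exactly to the steps you sketch.

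There is, however, one genuine slip. You write that Assumption \ref{assumption:centralizer} makes $S_{p+1}$ act trivially on $\HH_q(G_{n-p-1};\Field)$ and conclude that the $E^1$-term is $\Ind_{S_{n-p-1}\times S_{p+1}}^{S_n}\bigl(\HH_q(G_{n-p-1};\Field)\boxtimes \Trivial{p+1}\bigr)$. The first assertion is correct, but the conclusion is not: the $p$-chains also carry the orientation of the simplex $\{v^n_{n-p},\ldots,v^n_n\}$, and $S_{p+1}$ permutes those vertices, so it acts by the \emph{sign} character on the chain group. The correct identification is with $\Ind_{S_{n-p-1}\times S_{p+1}}^{S_n}\bigl(\HH_q(G_{n-p-1};\Field)\boxtimes \Alternate{p+1}\bigr) = \Induce{\HH_q(G_{n-p-1};\Field)}{p+1}$, exactly as in the paper's Claim 1. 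This is not cosmetic: with $\Trivial{p+1}$ you would get $\InduceTriv{\cdot}{p+1}$ rather than $\Induce{\cdot}{p+1}$, and the $d^1$-differential would \emph{not} assemble into the central stability chain complex of \S\ref{section:chaincomplex} (which is built from $\Induce{\cdot}{k}$), so Lemma \ref{lemma:centralstab} and Proposition \ref{proposition:exactness} would not apply. Once you insert the sign, the rest of your outline goes through as in the paper.

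A minor remark: you needn't invoke Theorem \ref{maintheorem:centraltospecht} explicitly in the induction. Proposition \ref{proposition:exactness} is stated for centrally stable sequences, and the Specht-stability detour is internal to its proof; the paper's argument for Theorem \ref{theorem:stabilitymachine} cites only Proposition \ref{proposition:exactness}.
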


\section{Stability for congruence subgroups}
\label{section:congruence}

We now show how to apply Theorem \ref{theorem:stabilitymachine} to congruence subgroups and
prove Theorems \ref{maintheorem:congruence} and \ref{maintheorem:congruencesl}.  The proofs
of these results are similar, so for concreteness we will give the details for
Theorem \ref{maintheorem:congruence} and let the reader make the obvious modifications
to prove Theorem \ref{maintheorem:congruencesl}.

\paragraph{Setup.}
Fix a ring $R$ and a proper $2$-sided ideal $q$ of $R$.  Recall that
$$\GL_n(R,q) = \Ker(\GL_n(R) \rightarrow \GL_n(R/q)).$$
Next, let $\widetilde{\GL}_n(R,q) = \GL_n(R,q) \cdot S_n$, where $S_n < \GL_n(R)$ is the
group of permutation matrices.  Clearly $\widetilde{\GL}_n(R,q) = \GL_n(R,q) \rtimes S_n$.

We will assume that $(R,q)$ satisfies the stable range condition $\SR_{d+2}$, which we now define.
See \cite[Chapter 5]{BassKTheory} for more details.  Let $\vec{e}_i \in R^n$ denote the 
vector with a $1$ in position $i$ and zeros elsewhere.

\begin{definition}
A set $\{\vec{v}_1,\ldots,\vec{v}_k\}$ of vectors in $R^n$ is
{\em unimodular} if $R \vec{v}_1 + \cdots + R \vec{v}_k$ is a direct summand of $R^n$.
\end{definition}

\begin{remark}
If $\vec{v} = (a_1,\ldots,a_n) \in R^n$ is a vector, then the set $\{\vec{v}\}$ is unimodular
if and only if $R a_1 + \cdots + R a_n = R$.  We will then say that the vector $\vec{v}$ is unimodular.
\end{remark}

\begin{definition}
We will say that $(R,q)$ satisfies the {\em stable range condition} $\SR_{d+2}$
if the following condition is satisfied for all
$n \geq d+2$.  Let $\vec{v} = (a_1,\ldots,a_n) \in R^n$ be a unimodular vector such that
$\vec{v} \equiv \vec{e}_1$ modulo $q$.  There then exist $b_1,\ldots,b_{n-1} \in q$
such that $(a_1 + b_1 a_n,\ldots,a_{n-1}+b_{n-1} a_n) \in R^{n-1}$ is unimodular.
\end{definition}

\paragraph{The simplicial complexes.}
We now discuss the simplicial complex we will use, which is a slight variant on a complex
introduced by Charney in \cite{CharneyCongruence}.  Let $\cdot$ denote the usual
dot product on $R^n$.  We remark that if $R$ is not commutative, then $\cdot$ is not commutative.

\begin{definition}
The $n$-dimensional {\em complex of split partial bases} over $q$, denoted $\Split_n(R,q)$,
is the simplicial complex whose $k$-simplices are sets
$\{(\vec{v}_0,\vec{w}_0),\ldots,(\vec{v}_k,\vec{w}_k)\} \subset R^n \times R^n$
satisfying the following conditions.
\begin{itemize}
\item The set $\{\vec{v}_0,\ldots,\vec{v}_k\}$ is unimodular.
\item For each $0 \leq i \leq k$, there exists some $1 \leq j_i \leq n$ such that
$\vec{v}_i \equiv \vec{w}_i \equiv \vec{e}_{j_i}$ modulo $q$.
\item For $0 \leq i,j \leq k$, we have $\vec{v}_i \cdot \vec{w}_j = \delta_{ij}$.
\end{itemize}
\end{definition}

\begin{remark}
One should think of a vertex $(\vec{v},\vec{w})$ of $\Split_n(R,q)$ as consisting
of a unimodular vector $\vec{v}$ together with a distinguished splitting
$R^n = \Span{\vec{v}} \oplus W$, where $W = \Set{$\vec{x}$}{$\vec{x} \cdot \vec{w} = 0$}$.
\end{remark}

The group $\GL_n(R,q)$ acts on $\Split_n(R,q)$ via the formula
$$M \cdot \{(\vec{v}_0,\vec{w}_0),\ldots,(\vec{v}_k,\vec{w}_k)\} = \{(M \vec{v}_0, (M^{-1})^t \vec{w}_0),\ldots, (M \vec{v}_k,(M^{-1})^t \vec{w}_0)\}.$$
This action is clearly nice and extends over $\widetilde{\GL}_n(R,q)$.

\paragraph{The distinguished simplex.}
Our distinguished simplex in $\Split_n(R,q)$ will be 
$\Delta_n = \{(\vec{e}_1,\vec{e}_1),\ldots,(\vec{e}_n,\vec{e}_n)\}$.

\paragraph{Verification of the assumptions.}
We now verify the five assumptions from \S \ref{section:stabilitymachine}.  Our constant
$C$ will be $d+8$.  Theorem \ref{maintheorem:congruence} will then follow
from Theorem \ref{theorem:stabilitymachine}. 
Assumptions 3--5 are trivial, so we will omit the details of their verification.
It remains to verify Assumptions 1 and 2.

\paragraph{Assumption 1.}
This assumption says that $\Split_n(R,q)$ is $k$-acyclic for $n \geq (d+8) 2^{k-1} - 3$.
In fact, we have the following.

\begin{lemma}
If $(R,q)$ satisfies $\SR_{d+2}$, then the complex $\Split_n(R,q)$ is $\frac{n-d-3}{2}$-acyclic.
\end{lemma}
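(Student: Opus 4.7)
The plan is to proceed by induction on $n$, following the classical template developed by Quillen, Charney, and van der Kallen for proving high-acyclicity of complexes built from unimodular sequences. When $n \leq d+2$, the bound $\frac{n-d-3}{2}$ is negative, so the assertion is vacuous apart from the case requiring $\Split_n(R,q)$ to be nonempty, which is immediate since $\{(\vec{e}_1,\vec{e}_1)\}$ is always a vertex.

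For the inductive step, the first key observation is a link computation. Given a $k$-simplex $\sigma = \{(\vec{v}_i,\vec{w}_i)\}_{i=0}^{k}$ of $\Split_n(R,q)$, the orthogonality condition $\vec{x}\cdot\vec{w}_i = 0$ forces any new vertex to lie in the submodule $W = \{\vec{x} \in R^n : \vec{x}\cdot\vec{w}_i = 0 \text{ for all } i\}$, which is free of rank $n-k-1$. Because each $(\vec{v}_i,\vec{w}_i)$ is congruent modulo $q$ to a pair of standard basis vectors, $W$ inherits a preferred basis congruent to the standard one mod $q$, and the link of $\sigma$ is canonically isomorphic to $\Split_{n-k-1}(R,q)$. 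By induction this link is $\frac{(n-k-1)-d-3}{2}$-acyclic.

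To pass from highly acyclic links to high acyclicity of the whole complex, I would apply a Quillen-style \emph{bad simplex} argument, essentially the one Charney carries out in \cite{CharneyCongruence} for a closely related complex. Given a simplicial cycle of dimension $m \leq \frac{n-d-3}{2}$, choose a distinguished vertex $(\vec{v}_\ast,\vec{w}_\ast)$ coming from the stable range condition, and then for each vertex $(\vec{v}_i,\vec{w}_i)$ appearing in the cycle, use $\SR_{d+2}$ applied to the pair $(R,q)$ to replace $\vec{v}_i$ by a translate $\vec{v}_i + b_i\vec{v}_\ast$ (with $b_i \in q$) so that the modified vector is unimodular, still congruent to a standard basis vector mod $q$, and orthogonal to $\vec{w}_\ast$. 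Iterating this so that each modified $\vec{v}_i$ is also orthogonal to all relevant $\vec{w}_j$ in the support of the cycle produces a homologous cycle lying in the star of $(\vec{v}_\ast,\vec{w}_\ast)$, hence a filling.

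The main technical obstacle is the careful, iterative application of $\SR_{d+2}$: one must simultaneously preserve the congruence condition mod $q$ (so one remains inside $\Split_n(R,q)$ and not merely in the unimodular-bases complex over $R$), the unimodularity of each $\vec{v}_i$, and a growing list of orthogonality relations against the $\vec{w}_j$. This is precisely what the relative stable range condition for the pair $(R,q)$ is designed to accommodate, but the simultaneous bookkeeping — in particular, tracking how many orthogonality conditions one can impose at each step while still leaving room for $\SR_{d+2}$ to produce the next modification — is what drives the bound $\frac{n-d-3}{2}$ rather than a weaker one, and is where the substance of the argument lies.
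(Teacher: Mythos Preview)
Your approach is different from the paper's, and it is worth spelling out the contrast. The paper does \emph{not} redo the induction/bad-simplex argument. Instead it observes that Charney already proved the desired acyclicity for the \emph{ordered} version of the complex: the poset $\Poset'(\Split_n(R,q))$ of ordered sequences of vertices spanning a simplex is $\frac{n-d-3}{2}$-acyclic by \cite[Theorem 3.5]{CharneyCongruence}. The paper then transfers this to the unordered complex by a one-line retraction: the forgetful map $\pi:\Poset'\to\Poset$ (to the face poset, whose realization is the barycentric subdivision of $\Split_n(R,q)$) has a section $\rho$ obtained from any total order on the vertices, so $\pi$ is surjective on reduced homology and the acyclicity descends. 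This is a two-paragraph argument that treats Charney's theorem as a black box.

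Your proposal, by contrast, is to rerun the Charney/van der Kallen machinery from scratch for the unordered complex. That is a legitimate route and is essentially what lies behind the cited theorem, but two points in your sketch would need tightening. First, the link identification: for a vertex $(\vec{v},\vec{w})$ to be adjacent to $\sigma$ you need both $\vec{v}\cdot\vec{w}_i=0$ \emph{and} $\vec{v}_i\cdot\vec{w}=0$, so $\vec{v}$ and $\vec{w}$ live in two a priori different rank-$(n-k-1)$ summands, and one has to check that the restricted pairing really models the standard one on $R^{n-k-1}$ (this is straightforward after moving $\sigma$ to the standard simplex via transitivity, but deserves a sentence). Second, your description of the bad-simplex step conflates two things: the stable-range condition is what makes the \emph{transitivity/link} arguments go through, while the passage from acyclic links to an acyclic complex is a purely combinatorial cone-off, replacing one bad simplex at a time via a filling in its link, not a simultaneous modification of all vertices of the cycle. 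Neither issue is fatal, but the paper's reduction avoids all of this bookkeeping entirely.
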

\begin{proof}
Let $\Poset(\Split_n(R,q))$ be the face poset of $\Split_n(R,q)$, i.e.\ the poset
whose elements are simplices of $\Split_n(R,q)$ and where $\sigma \leq \sigma'$ if
$\sigma$ is a face of $\sigma'$.  The geometric realization $|\Poset(\Split_n(R,q))|$ of
$\Poset(\Split_n(R,q))$ is the barycentric subdivision of $\Split_n(R,q)$.  Next,
define $\Poset'(\Split_n(R,q))$ to be the poset whose elements are ordered
sequences $(x_0,\ldots,x_k)$ of distinct vertices of $\Split_n(R,q)$ such that the unordered
set $\{x_0,\ldots,x_k\}$ is a simplex of $\Split_n(R,q)$.  Sequences $s$ and $s'$ in
$\Poset'(\Split_n(R,q))$ satisfy $s \leq s'$ if $s$ is a subsequence of $s'$.  In
\cite[Theorem 3.5]{CharneyCongruence}, Charney proved that the geometric realization
of $\Poset'(\Split_n(R,q))$ is $\frac{n-d-3}{2}$-acyclic.

There is a natural map $\pi : \Poset'(\Split_n(R,q)) \rightarrow \Poset(\Split_n(R,q))$ which
``forgets'' the ordering on a sequence.  Choose a total ordering on the vertices
of $\Split_n(R,q)$, and define a poset map $\rho : \Poset(\Split_n(R,q)) \rightarrow \Poset'(\Split_n(R,q))$
by the formula $\rho(\{x_0,\ldots,x_k\}) = (x_0,\ldots,x_k)$, where the ordering on the $x_i$
is chosen such that $x_0 < x_1 < \cdots < x_k$.  It is clear that $\pi \circ \rho = 1$, which
implies that the map on geometric realizations induced by $\pi$ is surjective on reduced homology.
We conclude that the geometric realization of $\Poset(\Split_n(R,q))$ is $\frac{n-d-3}{2}$-acyclic.
\end{proof}

\paragraph{Assumption 2.}
This assumption says that all the $\GL_n(R,q)$-orbits of $(k+2)$-simplices of
$\Split_n(R,q)$ contain simplices in $\Delta_n$ for $n \geq (d+8) 2^{k-1} - 3$.  
This is an immediate consequence of the following.

\begin{lemma}
If $(R,q)$ satisfies $\SR_{d+2}$, then the group $\widetilde{\GL}_n(R,q)$ acts transitively
on $k$-simplices of $\Split_n(R,q)$ for $k \leq n-d$.
\end{lemma}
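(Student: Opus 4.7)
The plan is to induct on $k$, using the stable range condition $\SR_{d+2}$ to perform elementary reductions at each step. The base case is transitivity on vertices ($k=0$), and each inductive step peels off one vertex, reducing to the base case in a smaller dimensional subspace.

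First I would normalize modulo $q$. Each vertex $(\vec v_i,\vec w_i)$ of a $k$-simplex reduces to some $(\vec e_{j_i},\vec e_{j_i})$, and the orthogonality $\vec v_i\cdot\vec w_j=\delta_{ij}$ combined with properness of $q$ forces the indices $j_0,\ldots,j_k$ to be distinct. Applying a permutation from $S_n\subset\widetilde{\GL}_n(R,q)$, one reduces to $j_i=i+1$, so the task becomes finding $M\in\GL_n(R,q)$ taking the simplex to the standard one $\{(\vec e_1,\vec e_1),\ldots,(\vec e_{k+1},\vec e_{k+1})\}$.

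For the inductive step, I would apply the hypothesis to move the face $\{(\vec v_0,\vec w_0),\ldots,(\vec v_{k-1},\vec w_{k-1})\}$ to $\{(\vec e_1,\vec e_1),\ldots,(\vec e_k,\vec e_k)\}$. A direct matrix computation using $M\cdot(\vec v,\vec w)=(M\vec v,(M^{-1})^t\vec w)$ shows that the pointwise stabilizer of this face in $\GL_n(R,q)$ is the block subgroup $\GL_{n-k}(R,q)$ acting on the last $n-k$ coordinates: fixing each $\vec e_i$ as a column of $M$ and as a row of $M^{-1}$ forces $M$ to be block-diagonal. The simplex conditions then force the first $k$ coordinates of both $\vec v_k$ and $\vec w_k$ to vanish, so the problem restricts to finding $N\in\GL_{n-k}(R,q)$ taking a pair $(\vec v_k',\vec w_k')\in R^{n-k}\times R^{n-k}$ to $(\vec e_1,\vec e_1)$, where the pair satisfies $\vec v_k'\equiv\vec w_k'\equiv\vec e_1\pmod q$, $\vec v_k'$ is unimodular, and $\vec v_k'\cdot\vec w_k'=1$. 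This is exactly an instance of the base case in dimension $n-k$.

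For the base case, iterated application of $\SR_{d+2}$ yields transitivity of $E_m(R,q)$ on unimodular vectors congruent to $\vec e_1$ modulo $q$ in the stable range, so one finds $N_1\in E_m(R,q)$ with $N_1\vec v'=\vec e_1$. The transformed vector $(N_1^{-1})^t\vec w'$ then has first coordinate $1$ and all other coordinates in $q$, and a single upper-triangular elementary matrix fixing $\vec e_1$ clears these entries. The main obstacle is dimensional bookkeeping: $\SR_{d+2}$ must have sufficient room at every stage of the induction, and a careful accounting across all the inductive steps produces the stated bound $k\le n-d$.
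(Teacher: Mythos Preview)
The paper does not actually give a proof of this lemma; it simply writes ``This lemma can be proven exactly like \cite[Proposition on p.\ 2101]{CharneyCongruence}.'' Your outline is the standard argument for results of this type and is essentially what Charney does: reduce modulo $q$ and permute to align residues, then induct on $k$ by fixing a codimension-one face and observing that its pointwise stabilizer is the block subgroup $\GL_{n-k}(R,q)$, which reduces to the vertex case in $R^{n-k}$, handled by the usual stable-range argument (clear $\vec v$ to $\vec e_1$ by elementary moves, then clear the remaining entries of $\vec w$ by an upper-triangular move fixing $\vec e_1$). So your approach matches the intended one.

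One caution: you correctly flag the dimensional bookkeeping as the delicate point but do not actually carry it out. The exact bound $k \leq n-d$ depends on the precise version of the transitivity statement one extracts from $\SR_{d+2}$ in the relative setting, and getting the constant right requires tracking exactly which dimension the final vertex-transitivity step lives in. Since the paper is content to defer to Charney for this, your sketch is at the same level of detail as what the paper provides.
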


\noindent
This lemma can be proven exactly like \cite[Proposition on p.\ 2101]{CharneyCongruence}.

\section{The central stability chain complex}
\label{section:chaincomplex}

In this section, we introduce the {\em central stability chain complex}, which will play
a key role in our proofs.  We need a definition and some notation.

\begin{notation}
If $Y$ is a set, then let $S_Y$ denote the symmetric group on $Y$.
\end{notation}

\begin{definition}
Let
\begin{equation}
\label{eqn:potential}
V_n \stackrel{\phi_{n}}{\longrightarrow} V_{n+1} \stackrel{\phi_{n+1}}{\longrightarrow} V_{n+2} \stackrel{\phi_{n+2}}{\longrightarrow} \cdots \stackrel{\phi_{m-1}}{\longrightarrow} V_m
\end{equation}
be a sequence of maps between vector spaces, where $V_i$ is a representation of $S_i$ and
$\phi_i$ is $S_i$-equivariant for all $i$.  This sequence is {\em potentially centrally stable} if
the following holds for all $n \leq i < j \leq m$.  Let $\vec{v} \in V_j$ be in the image of $V_i$.  Then
$\sigma \cdot \vec{v} = \vec{v}$ for all $\sigma \in S_{\{i+1,\ldots,j\}}$.
\end{definition}

\begin{remark}
If $V_{i+1} = \Stab{V_i}{V_{i-1}}$ for all $n < i < m$, then the sequence is obviously
potentially centrally stable.
\end{remark}

\begin{notation}
For $k \geq 0$, let $\Alternate{k}$ denote the sign representation of $S_k$.  Also,
if $V$ is a representation of $S_n$, then denote $\Ind_{S_n \times S_k}^{S_{n+k}} V \boxtimes \Alternate{k}$
by $\Induce{V}{k}$.  We remark that we discussed the external tensor product $\boxtimes$ at the end
of the introduction.
\end{notation}

\noindent
Given a potentially centrally stable sequence like \eqref{eqn:potential} and some $M \geq m$, 
the associated $M$-central
stability chain complex will be of the form
$$\Induce{V_n}{M-n} \rightarrow \Induce{V_{n-1}}{M-n-1} \rightarrow \cdots \rightarrow \Induce{V_{m}}{M-m}.$$
Observe that each vector space is a representation of $S_M$.

\paragraph{Boundary maps.}
We begin by constructing the appropriate boundary maps.  Let $\phi_n : V_{n} \rightarrow V_{n+1}$ be an $S_n$-equivariant
map from a representation of $S_n$ to a representation of $S_{n+1}$, and fix some $M > n$.  We
will construct an $S_M$-equivariant map $\partial_{n} : \Induce{V_n}{M-n} \rightarrow \Induce{V_{n+1}}{M-n-1}$.
This map will be called the {\em $M$-boundary map} associated to $\phi_n$.

The construction is as follows.  
Let $C_{n}$ be a set of right coset representatives for $S_{\{n+2,\ldots,M\}}$ in $S_{\{n+1,\ldots,M\}}$, and
define an $S_n$-equivariant map $\partial_{n}' : V_n \rightarrow \Induce{V_{n+1}}{M-n-1}$ via the formula
$$\partial_{n}'(\vec{v}) = \sum_{\sigma \in C_{n}} (-1)^{|\sigma|} \sigma \cdot \phi_n(\vec{v}) \quad \quad (\vec{v} \in V_n).$$
Here we are identifying $V_{n+1}$ with its image in $\Induce{V_{n+1}}{M-n-1}$.  

\begin{lemma}
\label{lemma:cnmindependence}
The map $\partial_{n}'$ does not depend on the choice of $C_{n}$.
\end{lemma}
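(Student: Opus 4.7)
The plan is to show that the subgroup $S_{\{n+2,\ldots,M\}}$ acts on the image of $V_{n+1}$ inside $\Induce{V_{n+1}}{M-n-1}$ through the sign character. Once this sign-twist property is established, the sign distinguishing two matched coset representatives will get absorbed into the $(-1)^{|\sigma|}$ appearing in the definition of $\partial_n'$.

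The first and crucial step is to verify this sign-twist property. Unpacking the definition,
\[
\Induce{V_{n+1}}{M-n-1} = \Ind_{S_{n+1} \times S_{\{n+2,\ldots,M\}}}^{S_M} V_{n+1} \boxtimes \Alternate{M-n-1},
\]
and the canonical inclusion $V_{n+1} \hookrightarrow \Induce{V_{n+1}}{M-n-1}$ sends $\vec{w}$ to $1 \otimes (\vec{w} \otimes \alpha)$, where $\alpha$ is a fixed generator of the one-dimensional sign representation $\Alternate{M-n-1}$. For any $\tau \in S_{\{n+2,\ldots,M\}}$, I would apply the balance relation of the tensor product over the inducing subgroup:
\[
\tau \cdot (1 \otimes (\vec{w} \otimes \alpha)) = \tau \otimes (\vec{w} \otimes \alpha) = 1 \otimes \tau \cdot (\vec{w} \otimes \alpha) = (-1)^{|\tau|} \cdot 1 \otimes (\vec{w} \otimes \alpha),
\]
using that $\tau$ lies entirely in the $S_{\{n+2,\ldots,M\}}$ factor of the inducing subgroup and so acts trivially on $V_{n+1}$ and by the sign on $\Alternate{M-n-1}$. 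In other words, inside $\Induce{V_{n+1}}{M-n-1}$ one has the clean identity $\tau \cdot \phi_n(\vec{v}) = (-1)^{|\tau|} \phi_n(\vec{v})$.

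The second step is then a matter of bookkeeping. Given two sets of coset representatives $C_n$ and $C_n'$, I would pair each $\sigma \in C_n$ with the unique $\sigma' \in C_n'$ lying in the same coset, so that $\sigma = \sigma' \tau$ for some $\tau \in S_{\{n+2,\ldots,M\}}$. Then
\[
(-1)^{|\sigma|} \sigma \cdot \phi_n(\vec{v}) = (-1)^{|\sigma'|+|\tau|} \sigma' \cdot (\tau \cdot \phi_n(\vec{v})) = (-1)^{|\sigma'|+|\tau|} (-1)^{|\tau|} \sigma' \cdot \phi_n(\vec{v}) = (-1)^{|\sigma'|} \sigma' \cdot \phi_n(\vec{v}),
\]
and summing over matched pairs shows that $\partial_n'(\vec{v})$ computed from $C_n$ equals that computed from $C_n'$.

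No step here presents a genuine obstacle; this is a direct verification whose essential content is that inducing from the sign representation $\Alternate{M-n-1}$ (rather than from the trivial representation) is exactly the ingredient which forces $S_{\{n+2,\ldots,M\}}$ to act on the distinguished vector via the sign character, which in turn is precisely what makes $\partial_n'$ coset-invariant.
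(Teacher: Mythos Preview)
Your proof is correct and follows essentially the same approach as the paper's. The paper's argument is the same coset-matching computation you give in your second step, with the sign-twist identity $\tau \cdot \phi_n(\vec{v}) = (-1)^{|\tau|}\phi_n(\vec{v})$ used implicitly; you simply make that identity explicit by unpacking the induced representation, which is a welcome clarification but not a different idea.
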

\begin{proof}
Let $C_{n}'$ be another set of right coset representatives.  For $s \in C_{n}'$, there
exists a unique $\sigma_{s} \in C_{n}$ and $\tau_{s} \in S_{\{n+2,\ldots,M\}}$ such
that $s = \sigma_{s} \tau_{s}$.  For $\vec{v} \in V_{n}$, we then have
$$(-1)^{|s|} s \cdot \phi_n(\vec{v}) = (-1)^{|\sigma_s \tau_s|} \sigma_s \tau_s \cdot \vec{v}
= (-1)^{|\sigma_s|} (-1)^{|\tau_s|} (-1)^{|\tau_s|} \sigma_s \cdot \vec{v} = (-1)^{|\sigma_s|} \sigma_s \cdot \vec{v}.$$
The lemma follows.
\end{proof}

\begin{lemma}
\label{lemma:equivariance}
For $\delta \in S_{\{n+1,\ldots,M\}}$ and $\vec{v} \in V_n$, we have 
$\delta \cdot \partial_{n}'(\vec{v}) = (-1)^{|\delta|} \partial_{n}'(\vec{v})$.
\end{lemma}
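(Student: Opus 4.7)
The plan is to derive the equivariance statement as an immediate corollary of Lemma \ref{lemma:cnmindependence}. The key observation I will use is that if $C_n$ is any set of right coset representatives for $S_{\{n+2,\ldots,M\}}$ in $S_{\{n+1,\ldots,M\}}$, then so is $\delta C_n := \{\delta \sigma : \sigma \in C_n\}$ for any $\delta \in S_{\{n+1,\ldots,M\}}$. This holds because left multiplication by $\delta$ is a bijection on $S_{\{n+1,\ldots,M\}}$ that sends the coset $\sigma S_{\{n+2,\ldots,M\}}$ to $(\delta \sigma) S_{\{n+2,\ldots,M\}}$, so the collection of cosets is merely permuted; consequently $\delta C_n$ picks out exactly one representative from each coset.

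Granting this observation, I would apply Lemma \ref{lemma:cnmindependence} to rewrite $\partial_{n}'(\vec v)$ using $\delta C_n$ in place of $C_n$, then use the identity $(-1)^{|\delta \sigma|} = (-1)^{|\delta|}(-1)^{|\sigma|}$ and pull the action of $\delta$ out of the sum:
$$\partial_{n}'(\vec v) \;=\; \sum_{\sigma \in C_n} (-1)^{|\delta \sigma|} (\delta \sigma) \cdot \phi_n(\vec v) \;=\; (-1)^{|\delta|}\, \delta \cdot \sum_{\sigma \in C_n} (-1)^{|\sigma|} \sigma \cdot \phi_n(\vec v) \;=\; (-1)^{|\delta|}\, \delta \cdot \partial_{n}'(\vec v).$$
Multiplying both sides by $(-1)^{|\delta|}$ then yields the desired formula $\delta \cdot \partial_{n}'(\vec v) = (-1)^{|\delta|} \partial_{n}'(\vec v)$. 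I do not anticipate any serious obstacle here: the content is essentially bookkeeping, and the only nontrivial ingredient is the coset-permutation observation, which is a general fact about finite group actions rather than something specific to this setting.
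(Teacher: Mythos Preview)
Your proof is correct and takes essentially the same approach as the paper: both rest on the observation that left multiplication by $\delta$ permutes the cosets of $S_{\{n+2,\ldots,M\}}$ in $S_{\{n+1,\ldots,M\}}$, and then track the resulting sign. The only difference is organizational---you invoke Lemma~\ref{lemma:cnmindependence} to absorb the change of coset representatives, whereas the paper redoes that step inline by writing $\delta\sigma = \sigma_\delta \tau_{\sigma,\delta}$ with $\tau_{\sigma,\delta} \in S_{\{n+2,\ldots,M\}}$ and using that $\tau_{\sigma,\delta}$ acts on $\phi_n(\vec{v})$ by its sign; your packaging is arguably cleaner.
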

\begin{proof}
For $\sigma \in C_{n}$, there exists some $\sigma_{\delta} \in C_{n}$ 
and $\tau_{\sigma,\delta} \in S_{\{n+2,\ldots,M\}}$ such that
$\delta \sigma = \sigma_{\delta} \tau_{\sigma,\delta}$.  We have 
$(-1)^{|\delta|} (-1)^{|\sigma|} = (-1)^{|\sigma_{\delta}|} (-1)^{|\tau_{\sigma,\delta}|}$, and
thus
\begin{align*}
\delta \cdot \partial_{n}'(\vec{v}) &= \sum_{\sigma \in C_{n}} (-1)^{|\sigma|} \sigma_{\delta} \tau_{\sigma,\delta} \cdot \phi_{n}(\vec{v}) \\
&= \sum_{\sigma \in C_{n}} (-1)^{|\sigma|} (-1)^{|\tau_{\sigma,\delta}|} \sigma_{\delta} \cdot \phi_{n}(\vec{v}) \\
&= (-1)^{|\delta|} \sum_{\sigma \in C_{n}} (-1)^{|\sigma_{\delta}|} \sigma_{\delta} \cdot \phi_{n}(\vec{v}) = (-1)^{|\delta|} \partial_{n+i}'(\vec{v}).
\end{align*}
The final equality follows from the fact that the map $\sigma \mapsto \sigma_{\delta}$ is a permutation
of $C_{n}$.  
\end{proof}

\noindent
Completing our construction of $\partial_n$, Lemma \ref{lemma:equivariance} implies that 
$\partial_{n}'$ induces an $S_n \times S_{M-n}$-equivariant
map $V_{n} \boxtimes \Alternate{M-n} \rightarrow \Induce{V_{n+1}}{M-n-1}$, so
we obtain a $S_M$-equivariant map $\partial_{n} : \Induce{V_n}{M-n} \rightarrow \Induce{V_{n+1}}{M-n-1}$.

\paragraph{First relation to central stabilization.}
We pause now to observe that the above gives a presentation for the central stabilization of a map.

\begin{lemma}
\label{lemma:centralstab}
Let $\phi_n : V_n \rightarrow V_{n+1}$ be an $S_n$-equivariant map from a representation
of $S_n$ to a representation of $S_{n+1}$ and let $\partial_n$ be the $(n+2)$-boundary map
induced by $\phi_n$.  Set $V_{n+2} = \Stabb{V_n}{V_{n+1}}{\phi_n}$.  There is then an exact sequence
$$\Induce{V_n}{2} \stackrel{\partial_{n}}{\longrightarrow} \Induce{V_{n+1}}{1} \longrightarrow V_{n+2} \longrightarrow 0.$$
\end{lemma}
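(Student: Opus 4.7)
The plan is to verify the lemma by unwinding both sides: the submodule $U$ that cuts out $V_{n+2}$ and the image of $\partial_n$ turn out to be generated by the same elements, so matching them yields the exactness. Exactness at $V_{n+2}$ is automatic, since by definition $V_{n+2} = W/U$ with $W = \Ind_{S_{n+1}}^{S_{n+2}} V_{n+1}$, which is literally $\Induce{V_{n+1}}{1}$ because $\Alternate{1}$ is the trivial representation. So the real content is identifying $\Image(\partial_n)$ with the kernel $U$ of the quotient $W \to V_{n+2}$.

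To do this, I would specialize the construction from \S\ref{section:chaincomplex} to $M = n+2$. The subgroup $S_{\{n+3,\ldots,n+2\}}$ is trivial, so coset representatives for it in $S_{\{n+1,n+2\}}$ can be taken to be $C_n = \{e,\, (n+1,n+2)\}$. Plugging these into the defining formula gives
$$\partial_n'(\vec{v}) \;=\; \phi'_n(\vec{v}) - (n+1,n+2)\cdot \phi'_n(\vec{v}) \qquad (\vec{v} \in V_n),$$
where $\phi'_n$ is the composition of $\phi_n$ with the inclusion $V_{n+1} \hookrightarrow W$. By Lemma \ref{lemma:equivariance} this descends to an $S_n \times S_2$-equivariant map $V_n \boxtimes \Alternate{2} \to W$, which by the universal property of the induced representation extends uniquely to the $S_{n+2}$-equivariant map $\partial_n$ on $\Induce{V_n}{2}$. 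Hence $\Image(\partial_n)$ is exactly the $S_{n+2}$-submodule of $W$ generated by $\partial_n'(V_n)$.

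Comparing this with the definition of $U$ as the $S_{n+2}$-span of $\{\vec{v} - (n+1,n+2)\cdot \vec{v} \mid \vec{v} \in \phi'_n(V_n)\}$ yields $\Image(\partial_n) = U$, completing the proof. I do not expect any serious obstacle: every step is a definitional unpacking, and the only piece of bookkeeping that is not entirely automatic is the match between the sign representation $\Alternate{2}$ and the single transposition $(n+1,n+2)$, which is precisely what Lemmas \ref{lemma:cnmindependence} and \ref{lemma:equivariance} were set up to handle.
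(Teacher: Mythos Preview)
Your proposal is correct and follows essentially the same approach as the paper: both arguments identify $\Image(\partial_n)$ with the subspace $U$ defining $V_{n+2}$ by explicitly writing out $\partial_n'$ on $V_n$ and observing that it produces exactly the generators $\phi'_n(\vec{v}) - (n+1,n+2)\cdot\phi'_n(\vec{v})$ of $U$. The only difference is cosmetic---you compute $\partial_n'$ from the coset representatives $C_n = \{e,(n+1,n+2)\}$, while the paper defines the map $j(v) = i(v) - (n+1,n+2)\cdot i(v)$ directly and then remarks that it coincides with the $(n+2)$-boundary map.
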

\begin{proof}
By definition, there is a surjection $\pi : \Induce{V_n+1}{1} \rightarrow V_{n+2}$.  Let
$i : V_{n} \rightarrow \Induce{V_{n+1}}{1}$ be the $S_{n}$-equivariant map obtained
by composing $V_{n} \rightarrow V_{n+1}$ with the natural inclusion $V_{n+1} \hookrightarrow \Induce{V_{n+1}}{1}$.
There is an $S_{n} \times S_2$-equivariant map $j:V_{n} \boxtimes \Alternate{2} \rightarrow \Induce{V_{n+1}}{1}$
defined by $j(v) = v - (n,n+1) \cdot v$.  By the universal property of the induced representation,
this extends to a map
$\rho : \Induce{V_{n}}{2} \rightarrow \Induce{V_{n+1}}{1}$.  It is easy to see that $\rho$ is exactly the $(n+2)$-boundary
map associated to $\phi_n$.  By definition, the image of $\rho$ is the kernel of $\pi$, and we are done.
\end{proof}

\paragraph{The chain complex.}
We now prove that the above gives a chain complex, which as we said we will call the {\em $M$-central stability chain complex}.

\begin{lemma}
\label{lemma:chaincomplex}
Let
$$V_n \stackrel{\phi_{n}}{\longrightarrow} V_{n+1} \stackrel{\phi_{n+1}}{\longrightarrow} V_{n+2} \stackrel{\phi_{n+2}}{\longrightarrow} \cdots \stackrel{\phi_{m-1}}{\longrightarrow} V_m$$
be a potentially centrally stable sequence of representations of the symmetric group and let $M \geq m$.  
For $n \leq i < m$, let $\partial_i$ be the $M$-boundary map associated to $\phi_i$.  Then the sequence
$$\Induce{V_n}{M-n} \stackrel{\partial_n}{\longrightarrow} \Induce{V_{n+1}}{M-n-1} \stackrel{\partial_{n+1}}{\longrightarrow} \cdots \stackrel{\partial_{m-1}}{\longrightarrow} \Induce{V_{m}}{M-m} \longrightarrow 0$$
of representations of $S_M$ is a chain complex.
\end{lemma}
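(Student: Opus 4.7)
The plan is to verify $\partial_{i+1}\circ\partial_i = 0$ for each $n\le i<m-1$. Since $\partial_i$ is obtained from $\partial_i'\colon V_i\to \Induce{V_{i+1}}{M-i-1}$ via the universal property of induction, the composition $\partial_{i+1}\circ\partial_i$ is $S_M$-equivariant and hence determined by its values on the image of $V_i$ in $\Induce{V_i}{M-i}$. So it suffices to show $\partial_{i+1}\circ\partial_i' = 0$ on $V_i$.

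Fix $\vec{v}\in V_i$ and set $w = \phi_{i+1}\phi_i(\vec{v})\in V_{i+2}$. First I would expand:
$$\partial_{i+1}(\partial_i'(\vec v)) = \sum_{\sigma\in C_i}(-1)^{|\sigma|}\sigma\cdot\partial_{i+1}(\phi_i(\vec v)) = \sum_{\sigma\in C_i}\sum_{\tau\in C_{i+1}}(-1)^{|\sigma|+|\tau|}\sigma\tau\cdot w,$$
where the first step uses $S_M$-equivariance of $\partial_{i+1}$ (noting $\sigma\in S_{\{i+1,\ldots,M\}}\subseteq S_M$). Then I would observe that, by the uniqueness of coset decompositions, the set $D := \{\sigma\tau : \sigma\in C_i,\ \tau\in C_{i+1}\}$ is a set of coset representatives for $S_{\{i+3,\ldots,M\}}$ in $S_{\{i+1,\ldots,M\}}$, and $(-1)^{|\sigma\tau|} = (-1)^{|\sigma|}(-1)^{|\tau|}$, so the sum collapses to $\sum_{\rho\in D}(-1)^{|\rho|}\rho\cdot w$.

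The key input is now potential central stability: since $w$ lies in the image of $V_i$ inside $V_{i+2}$, the transposition $t := (i+1,i+2)\in S_{\{i+1,i+2\}}$ fixes $w$. I would construct a fixed-point-free involution on $D$ by, for each $\rho\in D$, writing $\rho t = \rho'\tau_\rho$ uniquely with $\rho'\in D$ and $\tau_\rho\in S_{\{i+3,\ldots,M\}}$. The involution property uses that $t$ commutes with $S_{\{i+3,\ldots,M\}}$, and the absence of fixed points follows from $t\notin S_{\{i+3,\ldots,M\}}$. For a pair $\{\rho,\rho'\}$, both $t$ and $\tau_\rho^{-1}$ lie in the inducing subgroup $S_{i+2}\times S_{M-i-2}$, so I can push them through: $t$ fixes $w\in V_{i+2}$, while $\tau_\rho^{-1}$ contributes the sign $(-1)^{|\tau_\rho|}$ via the $\Alternate{M-i-2}$ factor, giving $\rho'\cdot w = (-1)^{|\tau_\rho|}\rho\cdot w$ in $\Induce{V_{i+2}}{M-i-2}$. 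Combined with $(-1)^{|\rho'|} = -(-1)^{|\rho|}(-1)^{|\tau_\rho|}$, the two paired contributions cancel, and summing over pairs gives zero.

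The main obstacle is really just careful bookkeeping: confirming that the decomposition $D = C_i\cdot C_{i+1}$ gives a bona fide set of coset representatives for $S_{\{i+3,\ldots,M\}}$ in $S_{\{i+1,\ldots,M\}}$, that the rewriting $\rho t = \rho'\tau_\rho$ produces a genuine fixed-point-free involution, and that $t$ and $\tau_\rho$ sit in the correct factor of the inducing subgroup so that the sign contributions track correctly through the $\Alternate{M-i-2}$-factor. Aside from these verifications, the argument is a direct unpacking of the definitions together with the $(i+1,i+2)$-invariance coming from potential central stability.
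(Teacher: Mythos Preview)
Your proof is correct and follows essentially the same approach as the paper's: both reduce to the sum $\sum_{\rho}(-1)^{|\rho|}\rho\cdot w$ over coset representatives of $S_{\{i+3,\ldots,M\}}$ in $S_{\{i+1,\ldots,M\}}$ and cancel terms in pairs using the $(i+1,i+2)$-invariance of $w$ from potential central stability. The only cosmetic difference is that the paper first invokes the independence-of-representatives argument (Lemma~\ref{lemma:cnmindependence}) to replace $C_i\cdot C_{i+1}$ by a set of the explicit form $D\cup D\,(i+1,i+2)$ where the pairing is manifest, whereas you build a fixed-point-free involution directly on $C_i\cdot C_{i+1}$; these are equivalent bookkeeping choices.
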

\begin{proof}
Throughout this proof, we will regard $V_i$ as a subspace of $\Induce{V_i}{M-i}$ for all
$n \leq i \leq m$.  Fix some $n \leq i < m-2$, and consider $\vec{v} \in V_{i}$.  It is enough
to prove that $\partial_{i+1}(\partial_i(\vec{v})) = 0$.  Let $C_i$ and $C_{i+1}$ be 
the sets of coset representatives used to construct $\partial_i$ and $\partial_{i+1}$.  Set
$\vec{w} = \phi_{i+1}(\phi_{i}(\vec{v}))$.
Observe that $\partial_{i+1}(\partial_i(\vec{v}))$ equals
\begin{align*}
\partial_{i+1}(\sum_{\sigma \in C_{i}} (-1)^{|\sigma|} \sigma \cdot \phi_{i}(\vec{v})) 
= \sum_{\sigma \in C_{i}, \sigma' \in C_{i+1}} (-1)^{|\sigma \sigma'|} \sigma \sigma' \cdot \vec{w}.
\end{align*}
The set $\Set{$\sigma \sigma'$}{$\sigma \in C_{i}, \sigma' \in C_{i+1}$}$ is a set of 
right coset representatives for $S_{\{i+3,\ldots,M\}}$ in $S_{\{i+1,\ldots,M\}}$.

Let $D$ be a set of right coset representatives for $S_{\{i+1,i+2\}} \times S_{\{i+3,\ldots,M\}}$ in 
$S_{\{i+1,\ldots,M\}}$.  The set
$\Set{$\sigma$}{$\sigma \in D$} \cup \Set{$\sigma (i+1,i+2)$}{$\sigma \in D$}$ is 
thus a set of right coset representatives for $S_{\{i+3,\ldots,M\}}$ in
$S_{\{i+1,\ldots,M\}}$.  
By an argument similar to that in the proof of Lemma \ref{lemma:cnmindependence}, 
we deduce that $\partial_{i+1}(\partial_{i}(\vec{v}))$ equals
$$\sum_{\sigma \in D} \left((-1)^{|\sigma|} \sigma \cdot \vec{w} + (-1)^{|\sigma|+1} \sigma (i+1,i+2) \cdot \vec{w}\right) = 0.$$
Here we have used the fact that $(i+1,i+2) \cdot \vec{w} = \vec{w}$, 
which follows from the potential central stability of our sequence.
\end{proof}

\paragraph{Exactness.}
The following proposition is perhaps the most important technical result in this paper.  Its proof is contained in
\S \ref{section:spechtstability}--\ref{section:centraltospecht}.  We postpone it because it uses more representation
theory than the rest of the paper, and we want to separate as much as possible the representation theoretic parts
of this paper from the topological parts.  See the beginning of \S \ref{section:spechtstability} for a road map
of its proof.

\begin{proposition}
\label{proposition:exactness}
Let 
$$V_1 \rightarrow V_2 \rightarrow \cdots$$
be a coherent sequence of representations over a field $\Field$ of the symmetric group which is centrally stable starting
at $N$.  Assume that either $\Char(\Field)=0$ or $\Char(\Field) \geq 2N+2$.  
Consider $n$ and $m$ and $M$ such that $2N+1 \leq n \leq m \leq M$.  Then the $M$-central stability chain
complex associated to the potentially centrally stable sequence
$$V_n \rightarrow V_{n+1} \rightarrow \cdots \rightarrow V_m$$
is exact.
\end{proposition}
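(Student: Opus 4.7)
The plan is to reduce Proposition \ref{proposition:exactness} to a model-case calculation by combining Specht stability (Theorem \ref{maintheorem:centraltospecht}) with the exactness of induction from subgroups of finite groups, and then to identify the model complex with the augmented simplicial chain complex of a contractible simplex.

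\textbf{Step 1.} First I would invoke Theorem \ref{maintheorem:centraltospecht}: under the characteristic hypothesis, the coherent sequence $V_\bullet$ is Specht stable starting at $2N+1$. From this I would extract a finite filtration of $V_n \to V_{n+1} \to \cdots \to V_m$ whose successive subquotients are ``standard'' coherent sequences of the shape
\[
W^{\lambda}_i := \Ind_{S_{|\lambda|} \times S_{i-|\lambda|}}^{S_i}\bigl(M_\lambda \boxtimes \Trivial{i-|\lambda|}\bigr),
\]
indexed by partitions $\lambda$ of size at most $N$, where $M_\lambda$ is a fixed $S_{|\lambda|}$-representation. The compatibility of the structure maps $\phi_i$ with this filtration in the stable range is part of the content of Specht stability that I would need to tease out carefully.

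\textbf{Step 2.} Next I would observe that the $M$-central stability chain complex construction is exact in the coherent sequence: each term $\Induce{V_i}{M-i} = \Ind_{S_i \times S_{M-i}}^{S_M}(V_i \boxtimes \Alternate{M-i})$ is an exact functor of $V_i$ because induction from a subgroup of a finite group is exact in every characteristic, and the boundary maps $\partial_i$ are natural. Consequently a filtration of the coherent sequence produces a filtration of chain complexes by subcomplexes, and it suffices to verify exactness when $V_\bullet$ is a single standard piece $W^{\lambda}_\bullet$.

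\textbf{Step 3.} For a single standard piece, two applications of transitivity of induction yield
\[
\Induce{W^{\lambda}_i}{M-i} \;\cong\; \Ind_{S_{|\lambda|} \times S_{M-|\lambda|}}^{S_M}\Bigl(M_\lambda \boxtimes \Ind_{S_{i-|\lambda|} \times S_{M-i}}^{S_{M-|\lambda|}}(\Trivial{i-|\lambda|} \boxtimes \Alternate{M-i})\Bigr).
\]
Setting $a = i - |\lambda|$ and $b = M - i$, the inner representations $\Ind_{S_a \times S_b}^{S_{M-|\lambda|}}(\Trivial \boxtimes \Alternate)$ with $a + b = M - |\lambda|$, equipped with the boundary maps inherited from the $\partial_i$, are identified (basis element by basis element) with a segment of the augmented simplicial chain complex of the standard simplex on $M - |\lambda|$ vertices. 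Since that simplex is contractible, its augmented chain complex is acyclic in every characteristic, so the relevant segment is exact at its interior terms. The exact functor $\Ind_{S_{|\lambda|} \times S_{M-|\lambda|}}^{S_M}(M_\lambda \boxtimes -)$ then transports this acyclicity to the desired exactness of the chain complex for $W^{\lambda}_\bullet$, completing the proof.

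\textbf{Main obstacle.} The hardest part is Step 1: extracting from Theorem \ref{maintheorem:centraltospecht} a Specht-stability statement strong enough to deliver a filtration whose subquotients are the standard pieces above, and ensuring that the filtration is compatible with the $\phi_i$. This is particularly delicate in positive characteristic, where the representation theory of $S_i$ is not semisimple; the hypothesis $\Char(\Field) \ge 2N+2$ is what prevents the ``standard piece'' decomposition from degenerating in the range $i \ge 2N+1$ under consideration. A secondary technical point is verifying, with correct signs and coset conventions, that the inner chain complex in Step 3 truly matches the simplicial boundary rather than merely a related differential; once this dictionary is in place the acyclicity of the simplex does all of the work.
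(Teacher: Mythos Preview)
Your overall architecture---invoke Theorem \ref{maintheorem:centraltospecht} to obtain a filtration, use exactness of induction to reduce to the graded pieces, then do a model computation---is exactly the paper's strategy (this is Proposition \ref{proposition:spechtexactness} combined with Theorem \ref{maintheorem:centraltospecht}). The gap is in Step 1: the filtration that Specht stability actually provides is \emph{not} the one you describe. By the definition in \S\ref{section:spechtdefinition}, a Specht-stable sequence carries a top-indexed Specht filtration whose successive quotients are direct sums of \emph{Specht stabilization sequences}
\[
S^{\mu}(\Field) \hookrightarrow S^{\stab(\mu)}(\Field) \hookrightarrow S^{\stab^2(\mu)}(\Field) \hookrightarrow \cdots,
\]
not the induced sequences $W^{\lambda}_i = \Ind_{S_{|\lambda|} \times S_{i-|\lambda|}}^{S_i}\bigl(M_\lambda \boxtimes \Trivial{i-|\lambda|}\bigr)$ that you write down. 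These are genuinely different objects: in characteristic $0$ a single $W^{\lambda}_i$ decomposes by Pieri's rule into many Specht modules, while conversely an irreducible $S^{\stab^j(\mu)}(\Field)$ is never nontrivially induced from a Young subgroup, so the transitivity-of-induction manoeuvre in your Step 3 cannot be applied to it. Theorem \ref{maintheorem:centraltospecht} simply does not hand you a $W^{\lambda}$-filtration, and you have offered no mechanism for producing one.

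The paper therefore diverges from your outline precisely at the model computation. Its graded pieces being Specht stabilization sequences, the required exactness (Proposition \ref{proposition:spechtresolution}) is proved not by recognising a simplex but by tensoring with the sign representation and dualizing: this converts the complex into one assembled from the generalized Specht modules of James \cite[\S 17]{JamesSymmetric}, and exactness is then read off from his short exact sequences (Theorem \ref{theorem:decomposeinduce} and Corollary \ref{corollary:hatstabseq}). Your Step 3 argument is correct and attractive \emph{for $W^{\lambda}$ pieces}---it is essentially the Koszul-type acyclicity underlying free FI-modules in the Church--Ellenberg--Farb framework---but to complete your line you would need either a finite filtration of each Specht stabilization sequence by $W^{\lambda}$'s compatible with all structure maps, or an independent proof that centrally stable sequences admit $W^{\lambda}$-filtrations in the stable range. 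Neither follows from Theorem \ref{maintheorem:centraltospecht} as stated, and supplying such a statement would amount to importing a substantial piece of FI-module theory not developed in the paper.
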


\section{Proof that the central stability machine works}
\label{section:machinetheory}

In this section, we prove Theorem \ref{theorem:stabilitymachine}.  The actual
proof is in \S \ref{section:stabilitymachineproof}.  This is preceded by two
sections containing necessary background : \S \ref{section:coefficients}
discusses coefficient systems and \S \ref{section:equivariant} discusses some basic results in equivariant
homology theory.

\subsection{Coefficient systems}
\label{section:coefficients}

Fix a field $\Field$.  For technical reasons, we will need to work in the
category of {\em weak simplicial complexes}, which are defined exactly
like simplicial complexes except that they can have more than one simplex
spanned by a single set of vertices.  Fix a weak simplicial complex $X$.  Observe that
the simplices of $X$ form the objects of a category with a unique morphism 
$\sigma' \rightarrow \sigma$ whenever $\sigma'$ is a face of $\sigma$.

\begin{definition}
A {\em coefficient system} on $X$ is a contravariant functor from the category
associated to $X$ to the category of vector spaces over $\Field$.
\end{definition}

\begin{remark}
In other words, a coefficient system $\mathcal{F}$ on $X$ consists of $\Field$-vector spaces $\mathcal{F}(\sigma)$ for simplices
$\sigma$ of $X$ and linear maps $\mathcal{F}(\sigma' \rightarrow \sigma) : \mathcal{F}(\sigma) \rightarrow \mathcal{F}(\sigma')$ whenever $\sigma'$ is a
face of $\sigma$.  These linear maps must satisfy the obvious compatibility condition.
\end{remark}

\begin{definition}
Let $\mathcal{F}$ be a coefficient system on $X$.  Fix a total ordering on the elements of $X^{(0)}$.  The
{\it simplicial chain complex} of $X$ with coefficients in $\mathcal{F}$ is as follows.  Define 
$$\Chain_k(X;\mathcal{F}) = \bigoplus_{\sigma \in X^{(k)}} \mathcal{F}(\sigma).$$
Next, define a differential $\partial : \Chain_k(C;\mathcal{F}) \rightarrow \Chain_{k-1}(C;\mathcal{F})$
in the following way.  Consider $\sigma \in X^{(k)}$.  
We will denote an element of $\mathcal{F}(\sigma) \subset \Chain_k(X;\mathcal{F})$
by $c \cdot \sigma$ for $c \in \mathcal{F}(\sigma)$.  Let $v_0,\ldots,v_k$ be the vertices of $\sigma$.  
Choose the ordering
such that $v_i < v_{i+1}$ for $0 \leq i < k$.  Denote
by $\sigma_i$ the face of $\sigma$ opposite the vertex $v_i$.  For $c \in \mathcal{F}(\sigma)$, we then define
$$\partial(c \cdot \sigma) = \sum_{i=0}^k (-1)^i c_i \cdot \sigma_i,$$
where $c_i$ is the image of $c$ under the morphism
$\mathcal{F}(\sigma' \rightarrow \sigma) : \mathcal{F}(\sigma) \longrightarrow \mathcal{F}(\sigma_i)$.
Taking the homology of $\Chain_{\ast}(X;\mathcal{F})$ yields the {\it homology groups of $X$ with coefficients in $\mathcal{F}$},
which we will denote by $\HH_{\ast}(X;\mathcal{F})$.
\end{definition}

\begin{remark}
If $V$ is an $\Field$-vector space and $\mathcal{F}$ is the coefficient system that assigns $V$ to every simplex and the identity map to every
face map, then $\HH_{\ast}(X;\mathcal{F}) \cong \HH_{\ast}(X;V)$.  We will call this a {\em constant system of coefficients}.
\end{remark}

\subsection{Equivariant homology}
\label{section:equivariant}

We will need a small portion of the theory of equivariant homology.  All
the results below are contained (implicitly or explicitly) in \cite[\S VII]{BrownCohomology}.
Recall that if $G$ acts nicely on a simplicial complex $X$, then $X/G$ is a weak
simplicial complex in a natural way.

\begin{definition}
Consider a group $G$ acting nicely on a simplicial complex $X$.
Let $EG$ be a contractible simplicial complex on which $G$ acts nicely and freely, so $EG / G$ is a classifying space for $G$.  
Define $EG \times_G X$ to be the quotient of $EG \times X$ by the diagonal action of $G$.  The {\it $G$-equivariant
homology groups of $X$}, denoted $H_\ast^G(X;\Field)$, are defined to be $H_\ast(EG \times_G X;\Field)$.
\end{definition}

\begin{remark}
It is easy to see that $\HH_{\ast}^G(X;\Field)$ does not depend on the choice of $EG$.  The construction
of $EG \times_G X$ is known as the {\em Borel construction}.
\end{remark}

The following lemma summarizes two key properties of these homology groups.

\begin{lemma}
\label{lemma:equivarianthomology}
Consider a group $G$ acting nicely on a simplicial complex $X$.
\begin{itemize}
\item There is a canonical map $\HH_{\ast}^G(X;\Field) \rightarrow \HH_{\ast}(G;\Field)$.
\item If $X$ is $k$-acyclic, then the map $\HH_i^G(X;\Field) \rightarrow \HH_i(G;\Field)$ is an
isomorphism for $i \leq k$.
\end{itemize}
\end{lemma}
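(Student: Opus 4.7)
The plan is to exploit the Borel fibration $X \to EG \times_G X \to BG$ and its associated Serre spectral sequence. For the first bullet, the $G$-equivariant collapse map $X \to \mathrm{pt}$ induces a continuous map
\[
EG \times_G X \longrightarrow EG \times_G \mathrm{pt} = EG/G = BG,
\]
and applying $\HH_{\ast}(-;\Field)$ yields the desired canonical map $\HH_\ast^G(X;\Field) \to \HH_\ast(BG;\Field) = \HH_\ast(G;\Field)$. Niceness of the action (plus freeness of the $G$-action on $EG$) is exactly what ensures $EG \times_G X$ is a well-behaved topological space to work with, and that the projection is a fiber bundle with fiber $X$ since it is the bundle associated to the principal $G$-bundle $EG \to BG$.

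For the second bullet, I would apply the Serre spectral sequence of the fibration above:
\[
E^2_{p,q} = \HH_p(G;\HH_q(X;\Field)) \Longrightarrow \HH_{p+q}^G(X;\Field).
\]
The hypothesis that $X$ is $k$-acyclic, combined with the universal coefficient theorem, gives $\HH_q(X;\Field) = 0$ for $1 \leq q \leq k$ and $\HH_0(X;\Field) \cong \Field$ with trivial $G$-action. Thus $E^2_{p,q} = 0$ whenever $1 \leq q \leq k$, and for any $n \leq k$ the only potentially nonzero term on the line $p+q=n$ is $E^2_{n,0} = \HH_n(G;\Field)$. Any differential into $E^r_{n,0}$ originates at a slot with negative $q$-coordinate, while any differential out of $E^r_{n,0}$ lands at a slot whose $q$-coordinate lies in $[1,k]$; both therefore vanish. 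Hence $E^\infty_{n,0} = \HH_n(G;\Field)$, all other $E^\infty_{p,q}$ with $p+q=n$ vanish, and the edge homomorphism $\HH_n^G(X;\Field) \to \HH_n(G;\Field)$ is an isomorphism for $n \leq k$.

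The step I expect to be the main obstacle is identifying this edge homomorphism with the canonical map built in the first paragraph. This follows from naturality of the Serre spectral sequence applied to the morphism of fibrations from $X \to EG \times_G X \to BG$ to $\mathrm{pt} \to BG \to BG$: the edge map is, by construction, the map induced on $E^\infty_{\ast,0}$ by collapsing the fiber, which agrees with the canonical map after the identification $E^\infty_{n,0} = \HH_n(G;\Field)$. This is the only place where both ingredients (the geometric definition of the canonical map and the spectral sequence bookkeeping) need to be reconciled, and it is the part I would write out most explicitly.
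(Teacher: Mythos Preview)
Your argument is correct and is essentially the same as the paper's: the paper constructs the canonical map via the projection $EG \times_G X \to EG/G$ and derives the second claim from the spectral sequence in \cite[\S VII.7]{BrownCohomology}, which is precisely the Serre spectral sequence of the Borel fibration you use. Your write-up simply spells out the details the paper leaves to that reference; the one minor overstatement is that niceness of the action is not actually needed for this lemma (the Borel construction and its Serre spectral sequence work for any $G$-space), though it is used elsewhere in the paper.
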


\begin{remark}
The map $\HH_{\ast}^G(X;\Field) \rightarrow \HH_{\ast}(G;\Field)$ comes from map $EG \times_G X \rightarrow EG/G$
induced by the projection of $EG \times X$ onto its first factor.  The second claim is an immediate consequence
of the spectral sequence whose $E^2$ page is (7.2) in \cite[\S VII.7]{BrownCohomology}.
\end{remark}

To calculate equivariant homology groups, we will need a certain spectral sequence.  First, a definition.

\begin{definition}
Consider a group $G$ acting nicely on a simplicial complex $X$.  Define a coefficient
system $\mathcal{H}_q(G,X;\Field)$ on $X/G$ as follows.  Consider a simplex $\sigma$ of $X/G$.  
Let $\widetilde{\sigma}$ be any lift of $\sigma$ to $X$.  Set
$$\mathcal{H}_q(G,X;\Field)(\sigma) = \HH_q(G_{\widetilde{\sigma}};\Field),$$
where $G_{\widetilde{\sigma}}$ is the stabilizer of $\widetilde{\sigma}$.  It is easy to see that this does not depend on
the choice of $\widetilde{\sigma}$ and that it defines a coefficient system on $X/G$.
\end{definition}

\noindent
Our spectral sequence is then as follows.  It can be easily extracted from \cite[\S VII.8]{BrownCohomology}

\begin{theorem}
\label{theorem:mainspectralsequence}
Let $G$ be a group acting nicely on a simplicial complex $X$.  There is then a spectral sequence
converging to $\HH_{\ast}^G(X;\Field)$ with 
$$E^2_{p,q} \cong \HH_p(X/G;\mathcal{H}_q(G,X;\Field)).$$
\end{theorem}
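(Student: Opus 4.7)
The plan is to realize this spectral sequence as the spectral sequence associated to a natural filtration of the Borel construction $EG \times_G X$ by the skeleta of $X$. Since $G$ acts simplicially, the skeletal filtration $\{X^{(p)}\}_{p \geq 0}$ is $G$-invariant, so it descends to a filtration $Y_p := EG \times_G X^{(p)}$ of $EG \times_G X$. The resulting homological spectral sequence has
$$E^1_{p,q} = \HH_{p+q}(Y_p, Y_{p-1}; \Field)$$
and converges to $\HH_{p+q}(EG \times_G X;\Field) = \HH_{p+q}^G(X;\Field)$.

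The first main step is to identify the $E^1$ page. Collapsing $X^{(p-1)}$ inside $X^{(p)}$ yields a wedge of $p$-spheres indexed by the $p$-simplices of $X$, with $G$ acting by permuting indices. Pulling this through the Borel construction and using that the niceness hypothesis forces the stabilizer $G_{\widetilde\sigma}$ of any simplex $\widetilde\sigma$ to fix $\widetilde\sigma$ pointwise, one obtains
$$Y_p/Y_{p-1} \;\simeq\; \bigvee_{\sigma \in (X/G)^{(p)}} \bigl(BG_{\widetilde\sigma}\bigr)_+ \wedge S^p,$$
where $\widetilde\sigma$ denotes a choice of lift of $\sigma$. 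The Künneth formula then produces a natural identification
$$E^1_{p,q} \;\cong\; \bigoplus_{\sigma \in (X/G)^{(p)}} \HH_q(G_{\widetilde\sigma};\Field) \;=\; \Chain_p(X/G;\mathcal{H}_q(G,X;\Field)).$$

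The second main step is to verify that the $d^1$ differential agrees with the simplicial boundary map of the coefficient system $\mathcal{H}_q(G,X;\Field)$ introduced in \S\ref{section:equivariant}. By construction, $d^1$ is the connecting homomorphism of the triple $(Y_p, Y_{p-1}, Y_{p-2})$; unwinding the excision and Künneth identifications above, for each face inclusion $\sigma' \subset \sigma$ of $X/G$ it restricts to the map $\HH_q(G_{\widetilde\sigma};\Field) \to \HH_q(G_{\widetilde{\sigma'}};\Field)$ induced by the inclusion of stabilizers, together with the expected simplicial sign. This matches the differential in $\Chain_\ast(X/G;\mathcal{H}_q(G,X;\Field))$, so passing to $E^2$ yields the claimed isomorphism $E^2_{p,q} \cong \HH_p(X/G;\mathcal{H}_q(G,X;\Field))$.

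The main technical nuisance I anticipate is the bookkeeping in the second step: choosing lifts $\widetilde\sigma$ compatibly for comparable simplices of $X/G$, keeping track of signs through the Künneth isomorphism, and checking that the induced maps between stabilizer homologies depend only on the orbit and agree with those built into $\mathcal{H}_q(G,X;\Field)$. All of this is carried out in the general framework of the equivariant homology spectral sequence in \cite[\S VII.7--8]{BrownCohomology}, from which the present statement can be read off with only cosmetic changes to accommodate the (weak) simplicial setting.
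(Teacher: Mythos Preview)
Your proposal is correct and follows exactly the approach the paper indicates: the paper does not give its own proof of this theorem but simply states that it ``can be easily extracted from \cite[\S VII.8]{BrownCohomology}'', and the skeletal-filtration argument you sketch is precisely the construction carried out there. Your explicit identification of the $E^1$-page with the simplicial chains $\Chain_p(X/G;\mathcal{H}_q(G,X;\Field))$ and of $d^1$ with the coefficient-system boundary map is the content of Brown's \S VII.7--8, so nothing further is needed.
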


Assume now that $G$ acts nicely on a simplicial complex $X$ which is $k$-connected, and consider
$v \in X^{(0)}$.  The inclusion map $G_v \hookrightarrow G$ induces a map 
$\HH_k(G_v;\Field) \rightarrow \HH_k(G;\Field)$
which is easily described in terms of the spectral sequence in Theorem \ref{theorem:mainspectralsequence}.
First, Lemma \ref{lemma:equivarianthomology} says that the spectral sequence in Theorem
\ref{theorem:mainspectralsequence} converges to $\HH_i(G;\Field)$ for $0 \leq i \leq k$.  Next,
observe that there is a natural map 
$$\HH_k(G_v;\Field) \rightarrow E^2_{0,k} = \HH_0(X/G;\mathcal{H}_k(G,X;\Field))$$
obtained as the composition
$$\HH_k(G_v;\Field) \hookrightarrow \Chain_0(X/G;\mathcal{H}_k(G,X;\Field)) \rightarrow \HH_0(X/G;\mathcal{H}_k(G,X;\Field)),$$
where the first map is the natural inclusion.  The map $\HH_k(G_v;\Field) \rightarrow \HH_k(G;\Field)$
is then the composition
$$\HH_k(G_v;\Field) \rightarrow E^2_{0,k} \rightarrow E^{\infty}_{0,k} \rightarrow \HH_k(G;\Field).$$

\subsection{The proof of Theorem \ref{theorem:stabilitymachine}}
\label{section:stabilitymachineproof}

We now prove Theorem \ref{theorem:stabilitymachine}.  This requires the following standard
lemma.

\begin{lemma}[{\cite[Proposition III.5.3]{BrownCohomology}}]
\label{lemma:inducedrep}
Let $G$ be a group with a subgroup $H$.  Let $V$ be a representation of $G$ and $W \subset V$ be an $H$-subrepresentation.
Picking a set $\{g_i\}_{i \in I}$ of left coset representatives for $H$ in $G$, assume that
$$V = \bigoplus_{i \in I} g_i \cdot W.$$
Then $V \cong \Ind_H^G W$.
\end{lemma}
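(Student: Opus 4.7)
The plan is to write down the obvious map from $\Ind_H^G W$ to $V$ and use the hypothesized decomposition to conclude it is an isomorphism. Writing $\Ind_H^G W = \Field[G] \otimes_{\Field[H]} W$, I would first define a $\Field$-bilinear map $\Phi \colon \Field[G] \times W \to V$ by $\Phi(g, w) = g \cdot w$, using the given action of $G$ on $V$. The key verification is that $\Phi$ is $\Field[H]$-balanced, i.e.\ $\Phi(gh, w) = \Phi(g, h \cdot w)$ for all $h \in H$; this holds precisely because $W$ is an $H$-subrepresentation of $V$, so the intrinsic $H$-action on $W$ coincides with the restriction of the $G$-action on $V$. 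Consequently $\Phi$ descends to a well-defined linear map $\widetilde{\Phi} \colon \Ind_H^G W \to V$, and its $G$-equivariance is immediate from the formula $\widetilde{\Phi}(g' \cdot (g \otimes w)) = (g'g) \cdot w = g' \cdot \widetilde{\Phi}(g \otimes w)$.

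Next I would exploit the two direct sum decompositions. On the source side, the standard description of the induced representation gives $\Ind_H^G W = \bigoplus_{i \in I} g_i \otimes W$ as a $\Field$-vector space, because $\{g_i\}_{i \in I}$ is a basis for $\Field[G]$ as a right $\Field[H]$-module. The map $w \mapsto g_i \otimes w$ identifies $W$ with the summand $g_i \otimes W$. Under $\widetilde{\Phi}$, this summand is sent onto $g_i \cdot W \subset V$, and since the element $g_i$ acts invertibly on $V$, the restriction $w \mapsto g_i \cdot w$ is a linear isomorphism $W \xrightarrow{\cong} g_i \cdot W$. The hypothesis $V = \bigoplus_{i \in I} g_i \cdot W$ now matches the decompositions summand by summand, so $\widetilde{\Phi}$ is a $G$-equivariant linear isomorphism.

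The only genuine content is the $\Field[H]$-balancing step, which is where the assumption that $W$ is an $H$-subrepresentation (rather than merely an $H$-invariant subset closed under the $G$-action) is used; everything else is bookkeeping with the two parallel direct sum decompositions. This is why the paper cites the result as standard from Brown rather than reproving it.
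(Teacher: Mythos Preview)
Your argument is correct and is the standard one: build the canonical $G$-map $\Field[G]\otimes_{\Field[H]}W\to V$ from the inclusion $W\hookrightarrow V$, then compare the two direct-sum decompositions indexed by the coset representatives. The paper itself does not supply a proof of this lemma at all; it simply cites \cite[Proposition III.5.3]{BrownCohomology} and moves on, so there is nothing to compare against beyond noting that your proof is essentially Brown's. Your closing remark already anticipates this.
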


\begin{proof}[{Proof of Theorem \ref{theorem:stabilitymachine}}]
Let $\{G_i\}$ and $\{\widetilde{G}_i\}$ and $\{X_i\}$ and $\Delta_i = \{v_1^i,\ldots,v_i^i\}$ 
be as in \S \ref{section:stabilitymachine}.  Fix $k \geq 1$.  We wish to prove that the sequence 
$$\HH_k(G_1;\Field) \longrightarrow \HH_k(G_2;\Field) \longrightarrow \HH_k(G_3;\Field) \longrightarrow \cdots$$
of representations of the symmetric group is centrally stable with stability starting at
$\Bound{C 2^{k-1} - 4}$.  Assume as an inductive hypothesis that this is true for all smaller
nonnegative $k$ (for $k=1$, this assumption is vacuous).  To simplify our notation, we will omit the coefficients $\Field$ from our
homology groups and chain groups.

Fix $n \geq \Bound{C 2^{k-1} - 4}$.  We want to prove that there is an $S_{n+1}$-equivariant isomorphism
$$\HH_k(G_{n+1}) \cong \Stab{\HH_k(G_{n-1})}{\HH_k(G_n)}$$
and that the map $\HH_k(G_n) \rightarrow \HH_k(G_{n+1})$ is as in the definition of
central stabilization.  To do this, we will use the spectral sequence from
Theorem \ref{theorem:mainspectralsequence} for the action of $G_{n+1}$ on $X_{n+1}$.
This spectral sequence converges to $\HH_{\ast}^{G_{n+1}}(X_{n+1})$.  Since 
$n+1 \geq \Bound{C 2^{k-1} - 3}$, Assumption \ref{assumption:highlyacyclic} 
implies that $X_{n+1}$ is $k$-acyclic, so
by Lemma \ref{lemma:equivarianthomology} we have $\HH_i^{G_{n+1}}(X_{n+1}) \cong \HH_i(G_{n+1})$ 
for $0 \leq i \leq k$.  

To simplify our notation, we will denote $\Chain_j(X_{n+1}/G_{n+1};\mathcal{H}_i(G_{n+1},X_{n+1}))$ 
by $\Chain_j^i$.  The action of $\widetilde{G}_{n+1}$ on $X_{n+1}$ induces an action of $S_{n+1}$ on
$\Chain_j^i$ which commutes with the boundary map $\Chain_j^i \rightarrow \Chain_{j-1}^i$ for all $i$ and $j$.
The following observation is the key to our proof.

\BeginClaims
\begin{claims}
Fix some $1 \leq i \leq k$.  For all $0 \leq j \leq k+2$, there exists an $S_{n+1}$-equivariant isomorphism
$\eta_j : \Induce{\HH_i(G_{n-j})}{j+1} \rightarrow \Chain_j^i$ such that the diagram
$$\xymatrix{
\Induce{\HH_i(G_{n-k-2}}{k+3} \ar[r] \ar[d]^{\eta_{k+2}} & \Induce{\HH_i(G_{n-k-1})}{k+2} \ar[r] \ar[d]^{\eta_{k+1}} & \cdots \ar[r] & \Induce{\HH_i(G_{n})}{1} \ar[d]^{\eta_0} \\
\Chain_{k+2}^i \ar[r]                                    & \Chain_{k+1}^i \ar[r]                                     & \cdots \ar[r] & \Chain_{0}^i                             }$$
commutes.  Here bottom row is the chain complex computing $\HH_{\ast}(X_{n+1}/G_{n+1};\mathcal{H}_i(G_{n+1},X_{n+1}))$ and the top row
is the $(n+1)$-central stability chain complex for the potentially centrally stable sequence
$$\HH_i(G_{n-k-2}) \rightarrow \HH_i(G_{n-k-1}) \rightarrow \cdots \rightarrow \HH_i(G_{n}).$$
\end{claims}
\begin{proof}[Proof of claim]
Since $G_{n+1}$ acts nicely on $X_{n+1}$, the simplex $\Delta_{n+1}$ of $X_{n+1}$ injects
into $X_{n+1} / G_{n+1}$.  Let $\overline{\Delta}_{n+1}$ be its image.  Assumption \ref{assumption:symmetric}
says that the action of $S_{n+1}$ on $X_{n+1}/G_{n+1}$ preserves $\overline{\Delta}_{n+1}$.  Letting
$$D_j^i = \Chain_j(\overline{\Delta}_{n+1};\mathcal{H}_i(G_{n+1},X_{n+1})),$$
we see that $D_j^i$ is an $S_{n+1}$-representation.  There is a natural map
$\kappa_j : D_j^i \rightarrow \Chain_j^i$ induced by the 
inclusion $\overline{\Delta}_{n+1} \hookrightarrow X_{n+1}/G_{n+1}$.
Since $n+1 \geq \Bound{C 2^{k-1} - 3}$, Assumption \ref{assumption:quotient} implies 
that $\kappa_j$ is an isomorphism for $0 \leq j \leq k+2$.  We will prove that 
\begin{equation}
\label{eqn:dind}
D_j^i \cong \Induce{\HH_i(G_{n-j})}{j+1}
\end{equation}
as $S_{n+1}$-representations.  This will give us
the desired maps $\eta_j$; proving that the indicated diagram commutes is then an easy
exercise in the definitions of the various maps, and is thus omitted.

It remains to prove \eqref{eqn:dind}.  We want to apply Lemma \ref{lemma:inducedrep}.  By definition, we have
\begin{equation}
\label{eqn:ddecomp}
D_j^i = \bigoplus_{1 \leq \ell_0 < \cdots < \ell_j \leq n+1} \HH_i((G_{n+1})_{\{v_{\ell_0}^{n+1},\ldots,v_{\ell_j}^{n+1}\}}).
\end{equation}
By Assumption \ref{assumption:centralizer}, the subgroup $S_{n-j} \times S_{j+1} \subset S_{n+1}$ preserves
the term $\HH_i((G_{n+1})_{\{v_{n+1-j}^{n+1},\ldots,v_{n+1}^{n+1}\}})$ of \eqref{eqn:ddecomp}.  Moreover, Assumption
\ref{assumption:stabilizer} says that $(G_{n+1})_{\{v_{n+1-j}^{n+1},\ldots,v_{n+1}^{n+1}\}} = G_{n-j}$, so as an $S_{n-j} \times S_{j+1}$
representation this term is isomorphic to $(\HH_i(G_{n-j})) \boxtimes \Alternate{j+1}$ (the subgroup
$S_{j+1}$ acts via the sign representation since it is just changing the orientation of the associated
simplex).  The left cosets of $S_{n-j} \times S_{j+1}$ in $S_{n+1}$ are exactly determined by what they
do to the unordered set $\{n+1-j,\ldots,n+1\}$, so letting $C$ be a complete set of such coset
representatives, we obtain that \eqref{eqn:ddecomp} can be rewritten
$$D_j^i = \bigoplus_{\sigma \in C} \sigma \cdot (\HH_i((G_{n+1})_{\{v_{n+1-j}^{n+1},\ldots,v_{n+1}^{n+1}\}})).$$
Lemma \ref{lemma:inducedrep} then implies that $D_j^i \cong \Induce{\HH_i(G_{n-j})}{j+1}$, as desired.
\end{proof}

We can now analyze the $E^2$-page of our spectral sequence.
\begin{claims}
$E^2_{0,k} = \Stab{\HH_k(G_{n-1})}{\HH_k(G_n)}$.
\end{claims}
\begin{proof}[Proof of claim]
Claim 1 implies that
$$E^2_{0,k} = \Coker(\Chain_1^k \rightarrow \Chain_0^k) = \Coker(\Induce{\HH_k(G_{n-1})}{2} \rightarrow \Induce{\HH_k(G_n)}{1}),$$ 
which by Lemma \ref{lemma:centralstab} equals $\Stab{\HH_k(G_{n-1})}{\HH_k(G_n)}$.
\end{proof}

\begin{claims}
$E^2_{j,i} = 0$ for $1 \leq i < k$ and $j \geq 1$ such that $j \leq k-i+1$.  
\end{claims}
\begin{proof}[Proof of claim]
This is asserting that the sequence
$$\Chain^i_{k-i+2} \rightarrow \Chain^i_{k-i+1} \rightarrow \cdots \rightarrow \Chain^i_0$$
is exact.  By Claim 1, this is equivalent to the exactness of the sequence 
\begin{equation}
\label{eqn:inductionseq}
\Induce{\HH_i(G_{n+i-k-2})}{k-i+3} \rightarrow \Induce{\HH_i(G_{n+i-k-1})}{k-i+2} \rightarrow \cdots \rightarrow \Induce{\HH_i(G_n)}{1}
\end{equation}
By induction, the sequence
$$\HH_i(G_1) \rightarrow \HH_i(G_2) \rightarrow \cdots$$
is centrally stable starting at $C 2^{i-1}-4$.  Using the easily-verified inequality $2^a-2^b \geq a-b$ for
integers $a \geq b \geq 0$, we have
\begin{align*}
2(C 2^{i-1} - 4)+1 &= (C 2^{k-1} - 4) - C(2^{k-1} - 2^{i}) - 3\\
                   &\leq (C 2^{k-1} - 4) - ((k-1) - i) - 3 \leq n+i-k-2.
\end{align*}
Proposition \ref{proposition:exactness} therefore implies that \eqref{eqn:inductionseq} is exact,
as desired.
\end{proof}

\begin{claims}
$E^2_{j,0} = 0$ for $1 \leq j \leq k+1$.
\end{claims}
\begin{proof}[Proof of claim]
The coefficient system $\mathcal{H}_0(G_{n+1},X_{n+1})$ is the constant coefficient system $\Field$.  As in
Claim 1, let $\overline{\Delta}_{n+1}$ be the image of $\Delta_{n+1}$ in $X_{n+1}/G_{n+1}$.
Assumption \ref{assumption:quotient} implies $\overline{\Delta}_{n+1}$ contains the entire
$(k+2)$-skeleton of $X_{n+1}/G_{n+1}$.  Since $\overline{\Delta}_{n+1}$ is contractible,
it follows that $\HH_j(X_{n+1}/G_{n+1};\mathcal{H}_0(G_{n+1},X_{n+1}))=0$ for
$1 \leq j \leq k+1$, as desired.
\end{proof}

Summarizing, the part of the $E^2$-page of our spectral sequence needed to 
compute $\HH_k(G_{n+1})$ is as follows.  
\begin{center}
\begin{tabular}{|c@{\hspace{0.2 in}}c@{\hspace{0.2 in}}c@{\hspace{0.2 in}}c@{\hspace{0.2 in}}c@{\hspace{0.2in}}c}
\footnotesize{$\Stab{\HH_k(G_{n-1})}{\HH_k(G_n)}$}      &                         &                         &                         &  & \\
\footnotesize{$\ast$}   & \footnotesize{$0$}      & \footnotesize{$0$}      &                         &  & \\
\footnotesize{$\ast$}   & \footnotesize{$0$}      & \footnotesize{$0$}      & \footnotesize{$0$}      &  & \\
\footnotesize{$\vdots$} & \footnotesize{$\vdots$} & \footnotesize{$\vdots$} & \footnotesize{$\vdots$} & \footnotesize{$\ddots$} & \\
\footnotesize{$\ast$}   & \footnotesize{$0$}      & \footnotesize{$0$}      & \footnotesize{$0$}      & \footnotesize{$\cdots$} & \footnotesize{$0$} \\
\cline{1-6}
\end{tabular}
\end{center}
We conclude that $\HH_k(G_{n+1}) = \Stab{\HH_k(G_{n-1})}{\HH_k(G_n)}$.  The fact that
the map $\HH_k(G_n) \rightarrow \HH_k(G_{n+1})$ is as in the definition of central stability
follows easily from the discussion after Theorem \ref{theorem:mainspectralsequence} together
with the fact that $G_n = (G_{n+1})_{v_{n+1}^{n+1}}$.
\end{proof}

\section{Specht stability}
\label{section:spechtstability}

In this section, we define a different notion of stability for coherent sequences of
representations which we call Specht stability.  There are two key results about
Specht stability.  The first is Theorem \ref{maintheorem:centraltospecht} from \S \ref{section:introduction},
which says that every centrally stable sequence of representations of the symmetric
group is also Specht stable (subject to an assumption on $\Char(\Field)$).  Theorem
\ref{maintheorem:centraltospecht} will be proven in \S \ref{section:centraltospecht}.  The
other key result about Specht stability is the following proposition, which is the analogue of
Proposition \ref{proposition:exactness} for Specht stability.

\begin{proposition}
\label{proposition:spechtexactness}
Let
$$V_1 \rightarrow V_2 \rightarrow \cdots$$
be a coherent sequence of representations of the symmetric group which is Specht stable starting
at $N$.  Consider $n$ and $m$ and $M$ such that $N \leq n \leq m \leq M$.  Then the $M$-central stability chain
complex associated to the potentially centrally stable sequence
$$V_n \rightarrow V_{n+1} \rightarrow \cdots \rightarrow V_m$$
is exact.
\end{proposition}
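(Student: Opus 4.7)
The plan is to reduce to a ``building block'' case using the structural description furnished by Specht stability, and then to recognize the resulting chain complex as a truncation of the acyclic augmented simplicial chain complex of a standard simplex. By the nature of Specht stability (parallel to the Hemmer-style example in the introduction), each $V_i$ for $i \geq N$ should admit a finite filtration whose associated graded pieces are of the form $\Ind_{S_k \times S_{i-k}}^{S_i}(W \boxtimes \Trivial{i-k})$ for some fixed $k \leq N$ and fixed $S_k$-representation $W$, with the coherent maps $\phi_i$ respecting the filtrations and inducing the obvious stabilization maps on graded pieces. Since induction and tensoring with $\Alternate{M-i}$ are both exact functors, this filtration on each $V_i$ lifts to a filtration of the $M$-central stability chain complex by subcomplexes; long exact sequences of homology then reduce the proposition to verifying exactness at interior positions for each individual building block.

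Assume therefore that $V_i = \Ind_{S_k \times S_{i-k}}^{S_i}(W \boxtimes \Trivial{i-k})$ for all $n \leq i \leq m$, with $\phi_i$ the obvious inclusion. Transitivity of induction gives
$$\Induce{V_i}{M-i} \;\cong\; \Ind_{S_k \times S_{M-k}}^{S_M}\bigl(W \boxtimes Z_{i,M}\bigr), \qquad Z_{i,M} := \Ind_{S_{i-k} \times S_{M-i}}^{S_{M-k}}(\Trivial{i-k} \boxtimes \Alternate{M-i}),$$
and since $\Ind_{S_k \times S_{M-k}}^{S_M}(W \boxtimes -)$ is exact, it suffices to check exactness of the complex $Z_{n,M} \to Z_{n+1,M} \to \cdots \to Z_{m,M}$ of $S_{M-k}$-representations, equipped with the boundary maps inherited from Section 4.

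A basis of $Z_{i,M}$ is naturally indexed by the $(i-k)$-subsets $Y \subseteq \{1, \ldots, M-k\}$, each weighted by an orientation of the complement $Z = \{1, \ldots, M-k\} \setminus Y$. Unwinding the boundary formula from Section 4 shows that $\partial$ moves one element from $Z$ into $Y$ with an alternating sign determined by the orientation of $Z$. Identifying $Y$ with its complement $Z$ (of size $M-i$) then recasts $Z_{\bullet, M}$ as the augmented simplicial chain complex of the standard simplex on $\{1, \ldots, M-k\}$, with $Z_{i,M}$ sitting in dimension $M-i-1$. Because the simplex is contractible, this augmented complex is acyclic, and so the stated truncation (from dimension $M-n-1$ down to $M-m-1$) is exact at every interior position, as required.

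The main obstacle will be pinning down the filtration provided by Specht stability and verifying that the identification of $Z_{\bullet, M}$ with the simplicial chain complex correctly matches signs: the boundary map of Section 4 is defined via an alternating sum over coset representatives of $S_{\{i+2,\ldots,M\}}$ in $S_{\{i+1,\ldots,M\}}$, and checking that this reproduces the standard alternating face map of the simplex is a careful but routine combinatorial bookkeeping exercise. Once that is done, exactness follows at once from contractibility.
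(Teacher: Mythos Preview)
Your overall reduction strategy---filter the chain complex using the structure provided by Specht stability and check exactness on associated graded pieces---is exactly what the paper does. But you have misidentified the building blocks. Specht stability, as defined in \S\ref{section:spechtdefinition}, equips each $V_i$ with a \emph{top-indexed Specht filtration}: the graded pieces are direct sums of individual Specht modules $S^{\mu}(\Field)$ (with first row of prescribed length), and the structure maps induce the stabilization maps $S^{\mu}(\Field)\hookrightarrow S^{\stab(\mu)}(\Field)$ on those pieces. They are \emph{not} of the form $\Ind_{S_k\times S_{i-k}}^{S_i}(W\boxtimes\Trivial{i-k})$. You have extrapolated from the Hemmer-style example in the introduction, but that example is a single centrally stable sequence, not the definition of Specht stability; in general a Specht module $S^{\stab^j(\mu)}(\Field)$ is not an induced representation of that shape, so your transitivity-of-induction step $\Induce{V_i}{M-i}\cong\Ind_{S_k\times S_{M-k}}^{S_M}(W\boxtimes Z_{i,M})$ does not apply.

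Consequently the simplex argument, while correct for the building blocks you wrote down, does not touch the actual case that matters: showing that the $(n+k)$-central stability chain complex
\[
\Induce{S^{\mu}(\Field)}{k}\longrightarrow\Induce{S^{\stab(\mu)}(\Field)}{k-1}\longrightarrow\cdots\longrightarrow S^{\stab^k(\mu)}(\Field)\longrightarrow 0
\]
is exact (this is the paper's Proposition~\ref{proposition:spechtresolution}). This step is genuinely nontrivial and is where the real content lies. The paper handles it by tensoring with $\Alternate{n+k}$ to convert $\Induce{\,\cdot\,}{j}$ into $\InduceTriv{\,\cdot\,}{j}$, then dualizing (using $(S^{\nu}_{\Alternate{}})^{\ast}\cong S^{\nu'}$ and $(\stab^{j}(\mu))'=\hatstab^{j}(\mu')$) to reduce to an exact sequence assembled from James's short exact sequences for generalized Specht modules \cite[Theorem 17.13]{JamesSymmetric}. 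There is no elementary ``chains on a simplex'' description of this complex, and none should be expected: the terms $\Induce{S^{\stab^j(\mu)}(\Field)}{k-j}$ do not share a common induced-from-$S_k$ structure that would allow you to strip off a fixed $W$.
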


\noindent
Proposition \ref{proposition:exactness} is an immediate corollary of Proposition \ref{proposition:spechtexactness} and
Theorem \ref{maintheorem:centraltospecht}.

The definition of Specht stability uses the fine structure of the representation theory 
of the symmetric group, which is briefly recalled in \S \ref{section:symmetricgroup}.  
In \S \ref{section:spechtfiltration}, we introduce a special filtration on representations
of $S_n$, and in \S \ref{section:spechtdefinition} we define Specht stability.
Finally, in \S \ref{section:spechtexactnessreduction} we prove Proposition \ref{proposition:spechtexactness}, making
use of a special case of Proposition \ref{proposition:spechtexactness} which is proven in \S \ref{section:spechtexactness}.

\subsection{Review of the representation theory of the symmetric group}
\label{section:symmetricgroup}

We begin by quickly reviewing some background material on the representation theory 
of the symmetric group.  There are numerous very different approaches to this material.
We will follow the approach of James's book \cite{JamesSymmetric}, which is the one that seems
best suited to working in finite characteristic.  Fix a field $\Field$.

\paragraph{Partitions and Young diagrams.}
A {\em partition} $\mu$ of an integer $n$ is an ordered nonincreasing sequence $(\mu_1,\ldots,\mu_k)$
of positive integers whose sum is $n$.  We will often write $\mu \vdash n$ to indicate that
$\mu$ is a partition of $n$.  A partition $\mu = (\mu_1,\ldots,\mu_k)$ can be visualized as a {\em Young diagram},
which is a diagram containing $\mu_1$ empty boxes on the first row, $\mu_2$ on the second row, etc., with all
rows left-justified.  For example,
the Young diagram for $(4,2,1)$ is
$$\ytableausetup{smalltableaux}
\ydiagram{4,2,1}$$
We will frequently confuse a partition with its associated Young diagram; for instance, we will discuss
``adding a box to the upper right hand corner'' of a partition.

\paragraph{Tableaux and tabloids.}
A {\em tableau} of shape $\mu \vdash n$ is obtained by filling in the boxes of the Young diagram of
$\mu$ with the numbers $\{1,\ldots,n\}$ such that each number is used exactly once.  A {\em tabloid}
of shape $\mu$ is similar to a tableau, but the entries in each row are unordered.  If $t$ is a tableau,
then we will denote the tabloid obtained by forgetting the ordering on the rows of $t$ by $\{t\}$.
Let $M^{\mu}(\Field)$ be the set of $\Field$-linear combinations of tabloids of shape $\mu$.  The
group $S_n$ acts on $M^{\mu}(\Field)$ in the obvious way.  It is not hard to see
that $M^{\mu}(\Field) = \Ind_{S_{\mu_1} \times S_{\mu_2} \times \cdots \times S_{\mu_k}}^{S_n} \Field$, where
$S_{\mu_1} \times \cdots \times S_{\mu_k}$ acts trivially on $\Field$ and is embedded in $S_n$ in the obvious way.

\paragraph{Polytabloids and Specht modules.}
The representations $M^{\mu}(\Field)$ are rarely irreducible.  If $t$ is a tableau of shape $\mu \vdash n$, then
let $\ColStab(t)$ be the subgroup of $S_n$ that preserves the columns of $t$.  The {\em polytabloid} $e_t$ associated
to $t$ is then
\begin{equation}
\label{eqn:polytabloid}
e_t = \sum_{\sigma \in \ColStab(t)} (-1)^{|\sigma|} \{\sigma \cdot t\} \in M^{\mu}(\Field).
\end{equation}
The {\em Specht module} associated to $\mu$, denoted $S^{\mu}(\Field)$, is the span of $\Set{$e_t$}{$t$ tableau of shape $\mu$}$ in $M^{\mu}(\Field)$.
The group $S_n$ clearly acts on $S^{\mu}(\Field)$.  A {\em standard tableau}
is a tableau $t$ such that the rows
and columns of $t$ are strictly increasing, and a {\em standard polytabloid} is the polytabloid associated to a standard
tableau.  The set of standard polytabloids of shape $\mu$ forms a basis for $S^{\mu}(\Field)$.

\paragraph{Decomposing representations.}
If $\Char(\Field) = 0$ or $\Char(\Field) \geq n+1$ (in other words, if $\Char(\Field)$ does not divide $n! = |S_n|$), then
$S^{\mu}(\Field)$ is an irreducible $S_n$-representation, and all irreducible $S_n$-representations over $\Field$
arise in this way.  Moreover, the above assumption on $\Char(\Field)$ implies that all representations of $S_n$ are
completely reducible, so we can decompose an $S_n$-representation $V$ over $\Field$ as
$$V = \bigoplus_{i \in I} S^{\mu_i}(\Field),$$
where $\mu_i \vdash n$ for all $i \in I$.  The isotypic components of this decomposition (that is, the direct sums
of isomorphic Specht modules within it) are unique.  We emphasize that all of this holds for infinite-dimensional
$V$, the key point being that if $V$ is an arbitrary $S_n$-representation and $\vec{v} \in V$, then the span
of the orbit $S_n \cdot \vec{v}$ is finite-dimensional.  If $0 < \Char(\Field) \leq n$, then Specht
modules need not be irreducible and $S_n$-representations over $\Field$ need not decompose as direct
sums of irreducible representations.  Nonetheless, the Specht modules still play a basic role in $S_n$-representation
theory.

\paragraph{Restricting representations.}
Fix $\mu \vdash n+k$.  We wish to study $\Res^{S_{n+k}}_{S_n} S^{\mu}(\Field)$.
The {\em deletable rows} of $\mu$ are the rows from which
the right-most box can be deleted to yield a Young diagram (these are the rows that
end with a ``corner'').  A length $k$ {\em deletion sequence} for $\mu$ is an ordered sequence
$\mathfrak{s} = (s_1,\ldots,s_k)$ of rows of $\mu$ such that $s_1$ is a deletable
row of $\mu$, such that $s_2$ is a deletable row of the Young diagram obtained by deleting
the last box in row $s_1$ of $\mu$, etc.  For example, if
$$\mu = \ydiagram{5,2,1}$$
then $(1,2,1)$ is a deletion sequence but $(2,2,1)$ is not a deletion sequence.  
Let $\mu_{\mathfrak{s}}$ denote the Young
diagram obtained by performing this sequence of deletions.  Thus in the previous example, we would
have
$$\mu_{(1,2,1)} = \ydiagram{3,1,1}$$
Let $\mathfrak{S}$ be the set of length $k$ deletion sequences for $\mu$.  We 
then have the following classical {\em restriction rule}.

\begin{theorem}[{\cite[\S 9]{JamesSymmetric}}]
\label{theorem:restriction}
If $\Char(\Field)=0$ or $\Char(\Field) \geq n+1$, 
then $\Res^{S_{n+k}}_{S_n} S^{\mu}(\Field) \cong \bigoplus_{\mathfrak{s} \in \mathfrak{S}} S^{\mu_{\mathfrak{s}}}(\Field)$.
\end{theorem}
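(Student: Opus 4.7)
The plan is to induct on $k$. The case $k=0$ is trivial. For $k=1$, I would follow the James branching-rule argument. Fix $\mu \vdash n+1$, enumerate the removable corners of $\mu$ from top to bottom as lying in rows $\rho_1 < \cdots < \rho_s$, and write $\mu^{(i)}$ for $\mu_{(\rho_i)}$, i.e., the diagram obtained by deleting the removable box in row $\rho_i$. Define an ascending chain of subspaces
\[
0 = V_0 \subset V_1 \subset \cdots \subset V_s = \Res^{S_{n+1}}_{S_n} S^{\mu}(\Field),
\]
where $V_i$ is the $\Field$-span of those polytabloids $e_t$ of shape $\mu$ for which the entry $n+1$ lies in one of the rows $\rho_1, \ldots, \rho_i$. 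A direct computation shows each $V_i$ is $S_n$-stable. The $S_n$-equivariant map sending such a polytabloid to the polytabloid of shape $\mu^{(i)}$ obtained by erasing the $n+1$-box induces an isomorphism $V_i/V_{i-1} \cong S^{\mu^{(i)}}(\Field)$; this combinatorial identification is carried out in detail in \cite[\S 9]{JamesSymmetric}. Because $\Char(\Field) \nmid n!$ by hypothesis, every $S_n$-representation over $\Field$ is semisimple, so the filtration splits, completing the case $k=1$.

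For the inductive step with $k \geq 2$, factor
\[
\Res^{S_{n+k}}_{S_n} S^{\mu}(\Field) \;=\; \Res^{S_{n+k-1}}_{S_n}\bigl(\Res^{S_{n+k}}_{S_{n+k-1}} S^{\mu}(\Field)\bigr).
\]
The James filtration (which is combinatorially defined, hence available in any characteristic) produces an $S_{n+k-1}$-submodule chain of $\Res^{S_{n+k}}_{S_{n+k-1}} S^{\mu}(\Field)$ whose successive quotients are $S^{\nu}(\Field)$ as $\nu$ ranges over single-box deletions of $\mu$. Restricting further to $S_n$ preserves this as an $S_n$-filtration with successive quotients $\Res^{S_{n+k-1}}_{S_n} S^{\nu}(\Field)$. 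The inductive hypothesis applied to each such $\nu$ (using the same hypothesis $\Char(\Field) \geq n+1$) identifies this with $\bigoplus_{\mathfrak{s}'} S^{\nu_{\mathfrak{s}'}}(\Field)$, summed over length-$(k{-}1)$ deletion sequences of $\nu$. Prepending the deletion row that carries $\mu$ to $\nu$ puts such pairs $(\nu, \mathfrak{s}')$ into bijection with length-$k$ deletion sequences $\mathfrak{s}$ of $\mu$, under which $\mu_\mathfrak{s} = \nu_{\mathfrak{s}'}$. Thus $\Res^{S_{n+k}}_{S_n} S^{\mu}(\Field)$ admits an $S_n$-filtration whose graded pieces total $\bigoplus_{\mathfrak{s} \in \mathfrak{S}} S^{\mu_\mathfrak{s}}(\Field)$, and complete reducibility of $S_n$-representations splits this filtration.

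The main obstacle is the base-case identification $V_i/V_{i-1} \cong S^{\mu^{(i)}}(\Field)$. The natural surjection from standard polytabloids with $n+1$ in row $\rho_i$ to standard polytabloids of the smaller shape $\mu^{(i)}$ is easy to define on generators, but verifying injectivity requires a careful analysis of the column stabilizers of shape-$\mu$ tableaux and of the Garnir-type relations modulo $V_{i-1}$; this is the content of the bookkeeping in \cite[\S 9]{JamesSymmetric}. Once that combinatorial identification is in hand, everything else is formal or follows from the Maschke-type splitting enabled by the hypothesis $\Char(\Field) \geq n+1$.
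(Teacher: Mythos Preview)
The paper does not give its own proof of this statement; it is simply cited as the classical restriction rule from \cite[\S 9]{JamesSymmetric}. Your proposal is a correct sketch of precisely that argument --- James's branching filtration by the row containing the top entry, with Maschke splitting under the hypothesis $\Char(\Field)\nmid n!$ --- together with the obvious induction on $k$. So your approach and the paper's (implicit, cited) approach coincide.
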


\subsection{A filtration on representations of the symmetric group}
\label{section:spechtfiltration}

We now discuss a type of filtration on a representation of the symmetric group which will play
a key role in the rest of this paper.  First, some definitions concerning filtrations.

\begin{definition}
\mbox{}
\begin{itemize}
\item A {\em filtered vector space} of length $N$ is a vector space $V$ equipped with a descending
filtration
$$V = \Filter{N}{V} \supseteq \Filter{N-1}{V} \supseteq \cdots \supseteq \Filter{0}{V} = 0.$$
We will use the convention that $\Filter{i}{V} = V$ for $i > N$ and $\Filter{i}{V} = 0$ for $i \leq 0$.
\item If $f : V \rightarrow W$ is a linear map between filtered vector spaces, then
$f$ is a {\em filtered map} of degree $k \geq 0$ if $f(\Filter{i}{V}) \subset \Filter{i+k}{W}$
for all $i$.  
\item If $f : V \rightarrow W$ is a filtered map of degree $k$, then
we get induced maps
$$f_i : \Filter{i}{V} / \Filter{i+1}{V} \rightarrow \Filter{i+k}{W} / \Filter{i+k+1}{W}$$
for all $i$.  We will call $f_i$ the {\em $i^{\text{th}}$ graded map} associated to $f$.
\end{itemize}
\end{definition}

We now give some motivation for our filtration.  Assume for the moment
that $\Char(\Field)=0$, and let $V$ and $W$ be representations over $\Field$ of $S_n$ and $S_{n+1}$, respectively.
Consider an $S_n$-equivariant map $f : V \rightarrow W$.  Decompose $V$ and $W$ as direct sums
$$V = \bigoplus_{i \in I} S^{\mu_i}(\Field) \quad \text{and} \quad W = \bigoplus_{j \in J} S^{\nu_j}(\Field)$$
of Specht modules.  What can we say about $f(S^{\mu_i}(\Field)) \subset W$?  

A hint is
provided by Theorem \ref{theorem:restriction} (the restriction rule).
Let the first row of $\mu_i$ have $r$ boxes, and let 
$$J' = \Set{$j \in J$}{$\nu_j$ has $r$ or $r+1$ boxes in its first row}.$$
Theorem \ref{theorem:restriction} implies that 
$$f(S^{\mu_i}(\Field)) \subset \bigoplus_{j \in J'} S^{\nu_j}(\Field) \subset W.$$
This suggests that it might be worthwhile to filter a representation of 
$S_n$ by the ``length of the top rows of its Specht modules''.

We now return to considering general fields $\Field$.  We make the above type of filtration precise as follows.

\begin{definition}
Let $V$ be an $S_n$-representation over $\Field$.  
A {\em top-indexed Specht filtration} for $V$ is an $S_n$-invariant filtration
$$V = \Filter{n}{V} \supset \Filter{n-1}{V} \supset \cdots \supset \Filter{0}{V} = 0$$
together with a decomposition
$$\Filter{i}{V} / \Filter{i-1}{V} = \bigoplus_{j \in I_i} S^{\mu(i,j)}(\Field)$$
for each $i$ such that the first row of $\mu(i,j)$ has $i$ boxes for $j \in I_i$.
\end{definition}

\begin{definition}
Let $V$ and $W$ be representations of $S_n$ and $S_{n+1}$, respectively, which are equipped with top-indexed Specht
filtrations.  A {\em Specht filtration map} $f : V \rightarrow W$ is an $S_{n}$-equivariant filtered map
of degree $1$.
\end{definition}

\begin{remark}
It follows from what we said above that if $\Char(\Field)=0$, then all representations $V$ of $S_n$ over $\Field$ can be uniquely equipped with top-indexed Specht
filtrations, and if $W$ is a representation of $S_{n+1}$ over $\Field$ and $f : V \rightarrow W$ is $S_n$-equivariant, then $f$ is a Specht filtration
map.  Neither of these need to hold if $\Char(\Field) > 0$.
\end{remark}

\subsection{Definition of Specht stability}
\label{section:spechtdefinition}

In this section, we define Specht stability.  We begin by describing how to stabilize
a single Specht module.  This notion of stability was first introduced by Church
and Farb; see their paper \cite{ChurchFarbStability} for many examples of situations
``in nature'' in which it occurs.

\begin{definition}
If $\mu = (\mu_1,\ldots,\mu_{\ell}) \vdash n$, then $\stab(\mu) = (\mu_1+1,\mu_2,\ldots,\mu_k) \vdash n+1$.
There is an $S_n$-equivariant map $M^{\mu}(\Field) \hookrightarrow M^{\stab(\mu)}(\Field)$ which
appends an $n+1$ to the first row of a tabloid in $M^{\mu}(\Field)$.  Restricting this
to $S^{\mu}(\Field)$, we get an $S_n$-equivariant map $S^{\mu}(\Field) \hookrightarrow S^{\stab(\mu)}(\Field)$ 
that we will call the {\em stabilization map}.
\end{definition}

\noindent
We now extend this to representations equipped with top-indexed Specht filtrations.

\begin{definition}
Let $V$ and $W$ be representations over $\Field$ of $S_n$ and $S_{n+1}$, respectively.  Assume that $V$ and
$W$ are equipped with top-indexed Specht filtrations and that $f : V \rightarrow W$ is a Specht filtration map.
The map $f$ is a {\em stabilization map} if the following holds for all $i \in \Z$.  Let 
$f_i : \Filter{i}{V} / \Filter{i-1}{V} \rightarrow \Filter{i+1}{W} / \Filter{i}{W}$ be the graded map and let
$$\Filter{i}{V} / \Filter{i-1}{V} = \bigoplus_{j \in I_i} S^{\mu(i,j)}(\Field) \quad \text{and} \quad \Filter{i+1}{W} / \Filter{i}{W} = \bigoplus_{j \in I_i'} S^{\nu(i,j)}(\Field)$$
be the decompositions.  There then exists a bijection $\sigma : I_i \rightarrow I_i'$ such that $f_i$ restricts to the stabilization
map $S^{\mu(i,j)}(\Field) \rightarrow S^{\nu(i,\sigma(i))}(\Field)$ for all $j \in I_i$.
\end{definition}

\begin{remark}
If $f : V \rightarrow W$ is a stabilization map as in the previous definition, then since $\Filter{0}{V}=0$ we must
have $\Filter{1}{W} = 0$.
\end{remark}

\noindent
We can now define Specht stability.

\begin{definition}
Let
$$V_1 \longrightarrow V_2 \longrightarrow V_3 \longrightarrow V_4 \longrightarrow \cdots$$
be a coherent sequence of representations of the symmetric group.  This sequence
is {\em Specht stable} with stability starting at $N$ if for all $n \geq N$, the $S_n$-representation
$V_n$ can be equipped with a top-indexed Specht filtration $\Filter{\bullet}{V_n}$
such that the maps $V_n \rightarrow V_{n+1}$ are stabilization maps.
\end{definition}

\begin{remark}
If 
$$V_1 \longrightarrow V_2 \longrightarrow V_3 \longrightarrow V_4 \longrightarrow \cdots$$
is a coherent sequence of representations of the symmetric group which is Specht stable starting at $N$, then
by a reasoning similar to the remark after the definition of the stabilization map we must have
$\Filter{i}({V_n})=0$ for $n \geq N$ and $i \leq n-N$.
\end{remark}

\begin{remark}
For coherent sequences of finite-dimensional representations over a field of characteristic $0$,
Specht stability is easily seen to imply both representation stability in the sense of 
Church-Farb \cite{ChurchFarbStability} and monotonicity in the sense of Church \cite{ChurchConfiguration}.
\end{remark}

\subsection{Reduction of Proposition \ref{proposition:spechtexactness} to a special case}
\label{section:spechtexactnessreduction}

The following is a special case of Proposition \ref{proposition:spechtexactness}.

\begin{proposition}
\label{proposition:spechtresolution}
Fix $\mu \vdash n$ and $k \geq 1$.  Let
\begin{equation}
\label{eqn:spechtchain}
\Induce{S^{\mu}(\Field)}{k} \rightarrow \Induce{S^{\stab(\mu)}(\Field)}{k-1} \rightarrow \cdots \rightarrow S^{\stab^k(\mu)}(\Field) \rightarrow 0
\end{equation}
be the $(n+k)$-central stability chain complex associated to the potentially stable sequence
$$S^{\mu}(\Field) \rightarrow S^{\stab(\mu)}(\Field) \rightarrow \cdots \rightarrow S^{\stab^k(\mu)}(\Field)$$
of representations of the symmetric group.  Then \eqref{eqn:spechtchain} is exact.
\end{proposition}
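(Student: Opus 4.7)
The plan is to analyze \eqref{eqn:spechtchain} by decomposing each term into Specht module isotypic components via Pieri's rule and verifying that the resulting isotypic subcomplexes are individually exact. First I would work in characteristic zero (or sufficiently large characteristic to keep Specht modules irreducible and representations semisimple); the general characteristic case will be handled separately by an integral argument using a comparison with tabloid modules.

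Pieri's rule yields
$$\Induce{S^{\stab^j(\mu)}(\Field)}{k-j} \cong \bigoplus_{\nu} S^{\nu}(\Field),$$
where $\nu$ runs over partitions of $n+k$ such that $\nu/\stab^j(\mu)$ is a vertical strip of size $k-j$. For each $\nu \vdash n+k$ containing $\mu$, let $r = \nu_1 - \mu_1$. A short combinatorial check reveals that $S^{\nu}(\Field)$ appears in $C_j := \Induce{S^{\stab^j(\mu)}(\Field)}{k-j}$ with multiplicity one exactly when $j \in \{r-1, r\}$ and $\nu/\mu$ restricted to rows $\geq 2$ is a vertical strip of size $k-r$. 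Hence each $S^{\nu}(\Field)$-isotypic subcomplex has at most two nonzero terms, located at positions $C_{r-1}$ and $C_r$, degenerating to a single term at $C_0$ when $r=0$. The one-term case only contributes to the kernel at the leftmost position and is harmless for exactness at $C_1, \ldots, C_k$; the two-term case has the form $S^{\nu}(\Field) \to S^{\nu}(\Field)$, so by Schur's lemma exactness at both positions follows once the boundary is shown to be nonzero.

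The main technical obstacle is verifying that this two-term boundary is nonzero. I would fix a standard polytabloid $e_t$ of shape $\stab^{r-1}(\mu)$ spanning the canonical copy of $S^{\nu}(\Field)$ in $C_{r-1}$, evaluate its boundary using the explicit signed coset sum from \S\ref{section:chaincomplex}, and then apply Garnir straightening relations to express the result in the standard polytabloid basis; the aim is to show that the projection onto the $S^{\nu}(\Field)$-isotypic component of $C_r$ is a nonzero scalar multiple of the corresponding standard polytabloid. The delicate point is that appending a new entry to the first row of a standard tableau typically destroys standardness, so this straightening to standard form is the combinatorial core of the argument.

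For the general characteristic case stated in the proposition, I would observe that every term of \eqref{eqn:spechtchain} has a natural integral model via the standard polytabloid basis, so the complex is realized over $\Z$ as a bounded complex of finitely generated free $\Z$-modules. I would then compare with the analogous chain complex built from tabloid modules $M^{\stab^j(\mu)}$: after factoring out the fixed tail $S_{\mu_2} \times \cdots \times S_{\mu_\ell}$ and identifying $\Ind^{S_{\mu_1+k}}_{S_{\mu_1+j} \times S_{k-j}} \Field \boxtimes \Alternate{k-j}$ with $\Lambda^{k-j}$ of the permutation representation on $\mu_1+k$ points, the $M$-complex reduces to a classical Koszul complex contracting by the linear form $e_1^* + \cdots + e_{\mu_1+k}^*$, which is exact over $\Z$. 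Combining the inclusion $S^{\mu} \hookrightarrow M^{\mu}$ with the characteristic-zero exactness and a careful analysis of how $S^{\mu}$ sits inside the $M^{\nu}$-filtration then yields exactness of the Specht complex over any $\Field$.
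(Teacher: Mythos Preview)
Your approach is genuinely different from the paper's.  The paper never decomposes into isotypic components; instead it tensors \eqref{eqn:spechtchain} with $\Alternate{n+k}$ (turning each $\Induce{\cdot}{j}$ into $\InduceTriv{\cdot}{j}$) and then dualizes using $(S^{\nu}_{\Alternate{}}(\Field))^{\ast}\cong S^{\nu'}(\Field)$.  Since $(\stab^{j}(\mu))'=\hatstab^{j}(\mu')$, the dual complex is
\[
0 \longrightarrow S^{\hatstab^k(\mu')}(\Field) \longrightarrow \InduceTriv{S^{\hatstab^{k-1}(\mu')}(\Field)}{1} \longrightarrow \cdots \longrightarrow \InduceTriv{S^{\mu'}(\Field)}{k},
\]
and this is obtained by splicing short exact sequences coming directly from James's Theorem~17.13.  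The argument is characteristic-free from the start, so no separate modular analysis is needed.

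Your characteristic-zero outline is structurally correct: by dual Pieri each $S^{\nu}(\Field)$ really does appear in at most two consecutive terms $C_{r-1},C_r$ with $r=\nu_1-\mu_1$, and Schur's lemma reduces exactness to a single nonvanishing.  You correctly identify that nonvanishing as the crux, but you do not establish it; the Garnir straightening you describe is feasible but is precisely the hard computation, and without it the argument is incomplete even in characteristic zero.

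The serious gap is your modular step.  Granting that the integral complex exists and that the $M^{\stab^j(\mu)}$-complex is (after the reduction you describe) a Koszul-type complex exact over~$\Z$, you have not explained how exactness of the tabloid complex forces exactness of the Specht subcomplex.  Specht modules are submodules of tabloid modules, but exactness does not pass to subcomplexes, and the quotients $M^{\nu}/S^{\nu}$ carry their own Specht filtrations whose interaction with the boundary maps you have not controlled.  The sentence ``a careful analysis of how $S^{\mu}$ sits inside the $M^{\nu}$-filtration'' is exactly the missing content.  The paper's sign-twist/duality manoeuvre avoids all of this because James's short exact sequences are already valid over an arbitrary field.
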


\noindent
The proof of Proposition \ref{proposition:spechtresolution} is in \S \ref{section:spechtexactness}.  

The following corollary follows from Lemma \ref{lemma:centralstab}
and the case $k=2$ of Proposition \ref{proposition:spechtresolution}.

\begin{corollary}
\label{corollary:stabilizespecht}
For $\mu \vdash n$, we have
$\Stab{S^{\mu}(\Field)}{S^{\stab(\mu)}(\Field)} \cong S^{\stab^2(\mu)}(\Field)$.  Moreover, the
map $S^{\stab(\mu)}(\Field) \rightarrow S^{\stab^2(\mu)}(\Field)$ obtained by composing the
map $S^{\stab(\mu)}(\Field) \hookrightarrow \Induce{S^{\stab(\mu)}(\Field)}{1}$ with the projection
$\Induce{S^{\stab(\mu)}(\Field)}{1} \rightarrow \Stab{S^{\mu}(\Field)}{S^{\stab(\mu)}(\Field)}$ is
the stabilization map.
\end{corollary}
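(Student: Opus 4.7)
The plan is to combine Lemma \ref{lemma:centralstab} with the case $k=2$ of Proposition \ref{proposition:spechtresolution}. Applying Lemma \ref{lemma:centralstab} to the stabilization map $\phi : S^{\mu}(\Field) \to S^{\stab(\mu)}(\Field)$ produces an exact sequence
$$\Induce{S^{\mu}(\Field)}{2} \xrightarrow{\partial_n} \Induce{S^{\stab(\mu)}(\Field)}{1} \longrightarrow \Stab{S^{\mu}(\Field)}{S^{\stab(\mu)}(\Field)} \longrightarrow 0,$$
where $\partial_n$ is the $(n+2)$-boundary map associated to $\phi$. On the other hand, the $k=2$ case of Proposition \ref{proposition:spechtresolution} asserts the exactness of the $(n+2)$-central stability chain complex
$$\Induce{S^{\mu}(\Field)}{2} \xrightarrow{\partial_n} \Induce{S^{\stab(\mu)}(\Field)}{1} \xrightarrow{\partial_{n+1}} S^{\stab^2(\mu)}(\Field) \longrightarrow 0,$$
whose leftmost differential is literally the same $\partial_n$. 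Hence both $\Stab{S^{\mu}(\Field)}{S^{\stab(\mu)}(\Field)}$ and $S^{\stab^2(\mu)}(\Field)$ are canonically the cokernel of $\partial_n$, which yields the first assertion.

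For the ``moreover'' part, I would transport the canonical surjection $\Induce{S^{\stab(\mu)}(\Field)}{1} \twoheadrightarrow \Stab{S^{\mu}(\Field)}{S^{\stab(\mu)}(\Field)}$ across the isomorphism of cokernels just established. Under this identification it becomes the differential $\partial_{n+1}$, which by definition is the $(n+2)$-boundary map associated to the stabilization map $\psi : S^{\stab(\mu)}(\Field) \to S^{\stab^2(\mu)}(\Field)$. Unpacking the construction of \S \ref{section:chaincomplex}, on the copy of $S^{\stab(\mu)}(\Field)$ sitting naturally inside $\Induce{S^{\stab(\mu)}(\Field)}{1}$, the map $\partial_{n+1}$ is given by $\vec{v} \mapsto \sum_{\sigma \in C_{n+1}} (-1)^{|\sigma|} \sigma \cdot \psi(\vec{v})$, where $C_{n+1}$ is a set of right coset representatives of $S_{\{n+3,\ldots,n+2\}}$ in $S_{\{n+2,\ldots,n+2\}}$. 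Both symmetric groups are trivial, so $C_{n+1} = \{e\}$ and this restriction is simply $\psi$, which is the stabilization map.

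There is no real obstacle in this corollary; it is a formal consequence of identifying two cokernels of a common map together with an inspection of what happens when the indexing sets degenerate. All the genuine content is concentrated upstream in Proposition \ref{proposition:spechtresolution}, whose proof is deferred to \S \ref{section:spechtexactness} and constitutes the actual difficulty.
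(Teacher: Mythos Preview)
Your proposal is correct and follows exactly the approach the paper indicates: the paper states only that the corollary ``follows from Lemma \ref{lemma:centralstab} and the case $k=2$ of Proposition \ref{proposition:spechtresolution},'' and you have simply unpacked this, identifying both $\Stab{S^{\mu}(\Field)}{S^{\stab(\mu)}(\Field)}$ and $S^{\stab^2(\mu)}(\Field)$ as the cokernel of the same boundary map $\partial_n$ and then checking that the degenerate coset sum defining $\partial_{n+1}$ collapses to the stabilization map $\psi$. The details you supply for the ``moreover'' clause are correct and go slightly beyond what the paper writes out explicitly.
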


We now show how to derive Proposition \ref{proposition:spechtexactness} from Proposition \ref{proposition:spechtresolution}.

\begin{proof}[{Proof of Proposition \ref{proposition:spechtexactness}}]
Let us recall the setup.  Let
$$V_1 \rightarrow V_2 \rightarrow \cdots$$
be a coherent sequence of representations of the symmetric group which is Specht stable starting
at $N$.  Consider $n$ and $m$ and $M$ such that $N \leq n \leq m \leq M$.  Then the claim is that
the $M$-central stability chain complex 
\begin{equation}
\label{eqn:spechtchain2}
\Induce{V_n}{M-n} \rightarrow \Induce{V_{n+1}}{M-n-1} \rightarrow \cdots \rightarrow \Induce{V_m}{M-m}
\end{equation}
associated to the potentially centrally stable sequence
$$V_n \rightarrow V_{n+1} \rightarrow \cdots \rightarrow V_m$$
is exact.  The filtrations on the $V_i$ given by Specht stability induce filtrations on the terms
$\Induce{V_i}{M-i}$.  These are compatible with the differentials in \eqref{eqn:spechtchain2}, so
\eqref{eqn:spechtchain2} is a filtered chain complex.  The associated graded pieces of
this filtered chain complex are direct sums of chain complexes like in Proposition \ref{proposition:spechtresolution},
and thus by Proposition \ref{proposition:spechtresolution} the homology of the associated graded
pieces of \eqref{eqn:spechtchain2} vanish.  Standard homological algebra (for instance, the
spectral sequence of a filtered chain complex) then shows that the homology of \eqref{eqn:spechtchain2} vanishes.
\end{proof}

\subsection{Specht stability and the central stability chain complex}
\label{section:spechtexactness}

Our goal is to prove Proposition \ref{proposition:spechtresolution}.  It turns out that
this was essentially proven by G. James in \cite[\S 17]{JamesSymmetric}, though his
formulation is quite different and it takes some effort to extract Proposition \ref{proposition:spechtresolution}
from James's work.  We begin by going over some necessary representation-theoretic background material.

\paragraph{Duality.}
If $\nu \vdash n$, then the {\em conjugate partition} of $\nu$, denoted $\nu'$, is the partition
whose Young diagram is obtained by converting each row of $\nu$ into a column.  For instance,
$$\nu = \ytableausetup{smalltableaux}\ydiagram{4,2,1} \quad \quad \text{and} \quad \quad \nu' =
\ydiagram{3,2,1,1}$$
Recalling that $\Alternate{n}$ is the sign representation of $S_n$, 
define $S^{\nu}_{\Alternate{}}(\Field) = S^{\nu}(\Field) \otimes \Alternate{n}$.  It is
then classical that $(S^{\nu}_{\Alternate{}}(\Field))^{\ast} \cong S^{\nu'}(\Field)$ (see, e.g.,\ \cite[\S 4]{Fayers}).  
For a representation $V$ of $S_n$ and $j \geq 0$, define
$$\InduceTriv{V}{j} = \text{Ind}_{S_n \times S_j}^{S_{n+j}} V \boxtimes \Trivial{j},$$
where $\Trivial{k}$ is the trivial representation of $S_j$.
Recall that if $H$ is a subgroup of $G$ and $W$ is an $H$-representation, then $(\Ind_H^G W)^{\ast} \cong \Ind_H^G W^{\ast}$
(see, e.g., \cite[\S 3.3]{BensonRep}).  This implies that $(\InduceTriv{S^{\nu}_{\Alternate{}}(\Field)}{j})^{\ast} \cong \InduceTriv{S^{\nu'}(\Field)}{j}$.

\paragraph{Weak partitions.}
A {\em weak partition} $\eta$ of an integer $n$ is an ordered sequence $(\eta_1,\ldots,\eta_k)$
of nonnegative integers whose sum is $n$.  Young diagrams, tableau and tabloids of shape $\eta$ are defined
in the obvious way, and given a tableau $t$ of shape $\eta$, we will let $\{t\}$ denote
the associated tabloid.  Also, $M^{\eta}(\Field)$ will still denote
the set of $\Field$-linear combinations of tabloids of shape $\eta$.  If $\eta=(\eta_1,\ldots,\eta_k)$
and $\nu=(\nu_1,\ldots,\nu_{\ell})$ are weak partitions with $\ell \leq k$, then
we will write $\nu \subset \eta$ if $\nu_i \leq \eta_i$ for all $1 \leq i \leq k$, where by convention
$\nu_i=0$ for $\ell < i \leq k$.  If $\nu \subset \eta$, then we will regard the Young diagram
of $\nu$ as being contained in the Young diagram for $\eta$.  For example, if $\nu = (2,1)$ and
$\eta=(3,1,3)$, then the Young diagrams are as follows.
$$\ydiagram[*(white) \bullet] {2,1}*[*(white)]{3,1,3}$$
The $\bullet$'s indicate the location of $\nu$.  Given a tableau $t$ of shape $\eta$, this allows us to refer to the portion of $t$ lying inside/outside $\nu$.

\paragraph{Specht modules for weak partitions.}
Assume now that $\eta$ is a weak partition of $n$ and $\nu$ is a partition (not just a weak
partition) satisfying $\nu \subset \eta$.  If $t$ is a tableau of shape $\eta$, then
let $\ColStab(t,\nu)$ be the subgroup of $S_n$ that acts as the identity on the portion
of $t$ lying outside of $\nu$ and preserves the columns of the portion of $t$ lying inside $\nu$.  
The {\em polytabloid} $e_t^{\nu}$ associated to $t$ is then
\begin{equation}
\label{eqn:genpolytabloid}
e_t^{\nu} = \sum_{\sigma \in \ColStab(t,\nu)} (-1)^{|\sigma|} \{\sigma \cdot t\} \in M^{\eta}(\Field).
\end{equation}
The {\em generalized Specht module} associated to the pair $(\nu,\eta)$, denoted $S^{\nu,\eta}(\Field)$, is the span in $M^{\eta}(\Field)$ of the set
$\Set{$e_t^{\nu}$}{$t$ tableau of shape $\eta$}$.  The group $S_n$ clearly acts on $S^{\nu,\eta}(\Field)$.  

\paragraph{Adding tails and stabilizing.}
Generalized Specht modules are closely related to certain kinds of induced representations.
For this, we need some notation.  Consider a partition $\nu = (\nu_1,\ldots,\nu_{\ell})$.
For $k \geq 1$, define 
$$\nu[k] = (\nu_1,\ldots,\nu_{\ell}+k) \quad \quad \text{and} \quad \quad \hatstab^k(\nu) = (\nu_1,\ldots,\nu_{\ell},k).$$
We will omit the $k$ in $\hatstab^k(\nu)$ if $k=1$.  Also, we will use the conventions $\hatstab^0(\nu) = \nu$ and $\nu[0] = \nu$.
We then have the following.

\begin{theorem}[{\cite[Theorem 17.13]{JamesSymmetric}}]
\label{theorem:decomposeinduce}
Let $\nu$ be a partition of $n$.  The for $m > n$, there exists a short exact sequence
$$0 \longrightarrow S^{\hatstab(\nu),(\hatstab(\nu))[m-n-1]}(\Field) \longrightarrow \InduceTriv{S^{\nu}(\Field)}{m-n} \longrightarrow S^{\nu,\nu[m-n]}(\Field) \longrightarrow 0$$
of $S_m$-representations.
\end{theorem}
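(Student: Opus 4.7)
The plan is to identify all three terms of the sequence inside, or as quotients of, a single permutation module $M^\eta(\Field)$, where $\eta = \hatstab^{m-n}(\nu) = (\nu_1, \ldots, \nu_\ell, m-n)$ for $\nu = (\nu_1, \ldots, \nu_\ell)$. A first observation is that $(\hatstab(\nu))[m-n-1] = \eta$, so the first term is literally $S^{\hatstab(\nu), \eta}(\Field) \subseteq M^\eta(\Field)$.

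First I would identify the middle term. Transitivity of induction gives $\InduceTriv{M^\nu(\Field)}{m-n} \cong M^\eta(\Field)$ as $S_m$-representations, since both are $\Ind$ of the trivial representation from the Young subgroup $S_{\nu_1} \times \cdots \times S_{\nu_\ell} \times S_{m-n}$. Under this identification, a generator $e_t \otimes 1 \in \InduceTriv{S^\nu(\Field)}{m-n}$ (for $t$ a tableau of shape $\nu$) corresponds to $e^\nu_{\bar t}$, where $\bar t$ extends $t$ by filling the last row with $\{n+1, \ldots, m\}$. Hence $\InduceTriv{S^\nu(\Field)}{m-n} \cong S^{\nu, \eta}(\Field)$. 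The inclusion $S^{\hatstab(\nu), \eta}(\Field) \hookrightarrow S^{\nu, \eta}(\Field)$ is then the obvious one coming from the coset decomposition of $\ColStab(t, \hatstab(\nu))$ over $\ColStab(t, \nu)$.

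Next I would construct the surjection. The row-merging map $\theta : M^\eta(\Field) \to M^{\nu[m-n]}(\Field)$, which collapses rows $\ell$ and $\ell+1$ of a tabloid into a single row, is manifestly $S_m$-equivariant. Since $\ColStab(t, \nu)$ permutes only entries in the $\nu$-portion of $t$ (which sits identically inside $\theta(t)$), a direct check gives $\theta(e^\nu_t) = e^\nu_{\theta(t)}$. The restricted map $\theta : S^{\nu, \eta}(\Field) \to S^{\nu, \nu[m-n]}(\Field)$ is therefore well-defined and surjective, since every generating polytabloid of the target is hit.

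The kernel identification is the heart of the argument. For the containment $S^{\hatstab(\nu), \eta}(\Field) \subseteq \ker \theta$, I would write $x_1, \ldots, x_{\ell+1}$ for the column-$1$ entries of $t$ and use the left coset representatives $\{e, (x_1, x_{\ell+1}), \ldots, (x_\ell, x_{\ell+1})\}$ of $\ColStab(t, \nu)$ in $\ColStab(t, \hatstab(\nu))$ to obtain
$$e^{\hatstab(\nu)}_t = e^\nu_t - \sum_{i=1}^{\ell} (x_i, x_{\ell+1}) \cdot e^\nu_t.$$
Applying $\theta$ produces an alternating sum over which of the $\ell+1$ entries $\{x_1, \ldots, x_{\ell+1}\}$ gets placed in the slot $(\text{row } \ell, \text{column } \nu_\ell + 1)$ of $\nu[m-n]$, the others filling the $\ell$-box column $1$ of $\nu$; this vanishes by a Garnir-type relation in $S^{\nu, \nu[m-n]}(\Field)$. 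The reverse containment $\ker \theta \subseteq S^{\hatstab(\nu), \eta}(\Field)$ is the main obstacle. Following James's treatment in \cite[\S 17]{JamesSymmetric}, I would build a standard-polytabloid basis of $S^{\nu, \eta}(\Field)$ adapted to the subspace $S^{\hatstab(\nu), \eta}(\Field)$ and show that the complementary basis elements map under $\theta$ to a linearly independent spanning set of $S^{\nu, \nu[m-n]}(\Field)$; a rank count then pins down the kernel. The delicate step is the combinatorial matching of semistandard-tableau bases on both sides so that the putative kernel basis lands bijectively among the standard polytabloids defining $S^{\hatstab(\nu), \eta}(\Field)$.
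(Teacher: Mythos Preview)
The paper does not give its own proof of this theorem: it is quoted directly from \cite[Theorem 17.13]{JamesSymmetric}, and the only content the paper adds is the remark explaining how to match notation (in James's language $\mu^{\#}=\nu$ and $\mu=(\nu_1,\ldots,\nu_\ell,m-n)$, which is exactly your $\eta$) and pointing out that the isomorphism $\InduceTriv{S^{\nu}(\Field)}{m-n}\cong S^{\nu,\eta}(\Field)$ appears in the proof of \cite[Corollary 17.14]{JamesSymmetric}. So there is no in-paper proof to compare against.

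That said, your outline is a faithful sketch of James's argument and is consistent with the paper's remark. Your identification of the middle term with $S^{\nu,\eta}(\Field)$ is exactly the isomorphism the paper cites from James, your row-merging map $\theta$ is James's map $\psi_{r,1}$ (specialised to the last two rows), and the kernel computation via the Garnir relation plus a standard-basis rank count is precisely the content of \cite[Theorem 17.13]{JamesSymmetric}. The only place where you are gesturing rather than arguing is the reverse inclusion $\ker\theta\subseteq S^{\hatstab(\nu),\eta}(\Field)$; you correctly flag this as the delicate step, and indeed in James it is handled by the semistandard homomorphism machinery of \S 17 rather than by a bare combinatorial matching. If you intend to make the proof self-contained you would need to reproduce that machinery; otherwise, citing \cite[\S 17]{JamesSymmetric} as you do is exactly what the paper does.
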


\begin{remark}
To relate Theorem \ref{theorem:decomposeinduce} to the statement in \cite[Theorem 17.13]{JamesSymmetric}, we make the following
two remarks.
\begin{itemize}
\item In the notation of \cite[Theorem 17.13]{JamesSymmetric}, we are taking $\mu^{\#} = \nu$ and $\mu = (\nu_1,\ldots,\nu_{\ell},m-n)$.
\item Instead of $\InduceTriv{S^{\nu})(\Field)}{m-n}$, the statement of \cite[Theorem 17.13]{JamesSymmetric} has $S^{\mu^{\#},\mu}(\Field)$ (though
actually, the field $\Field$ is not specified in the notation in \cite{JamesSymmetric}).  The isomorphism 
$S^{\mu^{\#},\mu}(\Field) \cong \InduceTriv{S^{\nu})(\Field)}{m-n}$ is discussed in the proof of \cite[Corollary 17.14]{JamesSymmetric}.
\end{itemize}
\end{remark}

\paragraph{An exact sequence.}
Let $\nu$ be a partition of $n$.  Consider $k \geq 0$.  Theorem \ref{theorem:decomposeinduce} gives the following
short exact sequences.
\begin{align*}
0 \longrightarrow S^{\hatstab(\nu),(\hatstab(\nu))[k-1]}(\Field)     \longrightarrow &\InduceTriv{S^{\nu}(\Field)}{k}                 \longrightarrow S^{\nu,\nu[k]}(\Field) \longrightarrow 0 \\
0 \longrightarrow S^{\hatstab^2(\nu),(\hatstab^2(\nu))[k-2]}(\Field) \longrightarrow &\InduceTriv{S^{\hatstab(\nu)}(\Field)}{k-1}       \longrightarrow S^{\hatstab(\nu),(\hatstab(\nu))[k-1]}(\Field) \longrightarrow 0 \\
&\vdots \\
0 \longrightarrow S^{\hatstab^{k-1}(\nu),(\hatstab^{k-1}(\nu))[1]}(\Field)   \longrightarrow &\InduceTriv{S^{\hatstab^{k-2}(\nu)}(\Field)}{2} \longrightarrow S^{\hatstab^{k-2}(\nu),(\hatstab^{k-2}(\nu))[2]}(\Field) \longrightarrow 0 \\
0 \longrightarrow S^{\hatstab^k(\nu),(\hatstab^k(\nu))[0]}(\Field)   \longrightarrow &\InduceTriv{S^{\hatstab^{k-1}(\nu)}(\Field)}{1} \longrightarrow S^{\hatstab^{k-1}(\nu),(\hatstab^{k-1}(\nu))[1]}(\Field) \longrightarrow 0 
\end{align*}
Stringing these short exact sequences together and using the obvious isomorphism $S^{\hatstab^k(\nu),(\hatstab^k(\nu))[0]}(\Field) \cong S^{\hatstab^k(\nu)}(\Field)$,
we obtain the following.

\begin{corollary}
\label{corollary:hatstabseq}
Let $\nu$ be a partition of $n$ and let $k \geq 0$.  There is then an exact sequence
$$0 \longrightarrow S^{\hatstab^k(\nu)}(\Field) \longrightarrow \InduceTriv{S^{\hatstab^{k-1}(\nu)}(\Field)}{1} \longrightarrow \cdots \longrightarrow \InduceTriv{S^{\nu}(\Field)}{k}$$
of representations of $S_{n+k}$.
\end{corollary}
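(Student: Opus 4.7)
The plan is to splice together the $k$ short exact sequences displayed immediately before the corollary into a single long exact sequence. I will check that the indexing matches up so that the first term in the spliced sequence is $S^{\hatstab^k(\nu)}(\Field)$ and the last is $\InduceTriv{S^{\nu}(\Field)}{k}$.

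First, I relabel: for $1 \leq i \leq k$, let
\[
0 \longrightarrow A_i \longrightarrow B_i \longrightarrow C_i \longrightarrow 0
\]
denote the $i^{\text{th}}$ displayed short exact sequence, so $A_i = S^{\hatstab^i(\nu),(\hatstab^i(\nu))[k-i]}(\Field)$, $B_i = \InduceTriv{S^{\hatstab^{i-1}(\nu)}(\Field)}{k-i+1}$, and $C_i = S^{\hatstab^{i-1}(\nu),(\hatstab^{i-1}(\nu))[k-i+1]}(\Field)$. Inspection shows $C_{i+1} = A_i$ for $1 \leq i \leq k-1$, and the obvious identification $S^{\hatstab^k(\nu),(\hatstab^k(\nu))[0]}(\Field) \cong S^{\hatstab^k(\nu)}(\Field)$ turns $A_k$ into the desired leftmost term.

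Given the matching $C_{i+1} = A_i$, I form the composite $d_i \colon B_{i+1} \twoheadrightarrow C_{i+1} = A_i \hookrightarrow B_i$ for $1 \leq i \leq k-1$. A routine diagram chase using the two short exact sequences shows that $\Ker(d_i) = \Ker(B_{i+1} \twoheadrightarrow C_{i+1}) = A_{i+1} = \Image(d_{i+1})$ and $\Image(d_i) = A_i = \Ker(B_i \twoheadrightarrow C_i)$, with the last step replaced for $i=1$ by the fact that we are not extending past $B_1$. Concatenating, the sequence
\[
0 \longrightarrow A_k \longrightarrow B_k \xrightarrow{d_{k-1}} B_{k-1} \xrightarrow{d_{k-2}} \cdots \xrightarrow{d_1} B_1
\]
is exact at every term, where exactness at $A_k$ uses the injectivity built into the $k^{\text{th}}$ short exact sequence, and exactness at $B_k$ uses that the map $A_k \hookrightarrow B_k$ in that sequence is precisely the kernel of $B_k \twoheadrightarrow C_k = A_{k-1}$.

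There is essentially no hard step here; this is just standard splicing of short exact sequences (the ``long exact sequence from a filtration'' construction). The only thing one must be careful about is bookkeeping of indices to match the statement, together with the terminal identification $S^{\hatstab^k(\nu),(\hatstab^k(\nu))[0]}(\Field) = S^{\hatstab^k(\nu)}(\Field)$ that turns the abstract kernel $A_k$ into the Specht module named in the statement. All of the representation-theoretic content has already been invested in Theorem \ref{theorem:decomposeinduce}.
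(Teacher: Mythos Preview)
Your proposal is correct and follows exactly the approach the paper takes: the paper simply says to string together the displayed short exact sequences (using the identification $S^{\hatstab^k(\nu),(\hatstab^k(\nu))[0]}(\Field) \cong S^{\hatstab^k(\nu)}(\Field)$), and you have carried out that splicing argument in full detail.
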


\paragraph{The proof.}
We are finally in a position to prove Proposition \ref{proposition:spechtresolution}.

\begin{proof}[{Proof of Proposition \ref{proposition:spechtresolution}}]
Let us recall the setup.  Fix $\mu \vdash n$ and $k \geq 1$.  Let
\begin{equation}
\label{eqn:spechtchain3}
\Induce{S^{\mu}(\Field)}{k} \longrightarrow \Induce{S^{\stab(\mu)}(\Field)}{k-1} \longrightarrow \cdots \longrightarrow S^{\stab^k(\mu)}(\Field) \longrightarrow 0
\end{equation}
be the $(n+k)$-central stability chain complex associated to the potentially stable sequence
$$S^{\mu}(\Field) \longrightarrow S^{\stab(\mu)}(\Field) \longrightarrow \cdots \longrightarrow S^{\stab^k(\mu)}(\Field)$$
of representations of the symmetric group.  Then we must prove that \eqref{eqn:spechtchain3} is exact.

Recall that if $H$ is a subgroup of $G$ and $V$ is an $H$-representation and $W$ is a $G$-representation, then
$W \otimes \Ind_H^G V \cong \Ind_H^G (V \otimes \Res^G_H W)$ (see, e.g., \cite[Proposition 3.3.3i]{BensonRep}).  
This implies that $\Alternate{n+k} \otimes \Induce{S^{\stab^j(\mu)}(\Field)}{k-j} \cong \InduceTriv{S^{\stab^j(\mu)}_{\Alternate{}}(\Field)}{k-j}$ for
all $0 \leq j \leq k$.  Tensoring \eqref{eqn:spechtchain3} with $\Alternate{n+k}$, it is therefore enough to prove
that the resulting chain complex
\begin{equation}
\label{eqn:spechtchain4}
\InduceTriv{S^{\mu}_{\Alternate{}}(\Field)}{k} \longrightarrow \InduceTriv{S^{\stab(\mu)}_{\Alternate{}}(\Field)}{k-1} \longrightarrow \cdots \longrightarrow S^{\stab^k(\mu)}_{\Alternate{}}(\Field) \longrightarrow 0
\end{equation}
is exact.  

Recall from above that $\InduceTriv{S^{\stab^{k-j}(\mu)}_{\Alternate{}}(\Field)}{j}$ is dual to $\InduceTriv{S^{(\stab^{k-j}(\mu))'}(\Field)}{j}$ for all
$0 \leq j \leq k$.  Since $(\stab^{k-j}(\mu))' = \hatstab^{k-j}(\mu')$, the dual chain complex of \eqref{eqn:spechtchain4} is
\begin{equation}
\label{eqn:spechtchain5}
0 \longrightarrow S^{\hatstab^k(\mu')}(\Field) \longrightarrow \InduceTriv{S^{\hatstab^{k-1}(\mu')}(\Field)}{1} \longrightarrow \cdots \longrightarrow \InduceTriv{S^{\mu'}(\Field)}{k}.
\end{equation}
It is enough to prove that \eqref{eqn:spechtchain5} is exact.  In fact, letting $\nu = \mu'$, this is exactly
the chain complex that Corollary \ref{corollary:hatstabseq} asserts is exact (modulo the signs of the boundary
maps, which depend on the noncanonical choice of a duality pairing).  This follows easily from
the formulas for the various maps involved given in \cite[\S 17]{JamesSymmetric} combined with the explicit
duality isomorphism given in \cite[\S 4]{Fayers}.
\end{proof}

\section{Central stability implies Specht stability}
\label{section:centraltospecht}

In this section, we will prove Theorem \ref{maintheorem:centraltospecht}, which asserts
that a centrally stable sequence of representations of the symmetric group
is also Specht stable.  We start with
some definitions.  

\begin{definition}
The {\em width} of a Specht module $S^{\mu}(\Field)$ is the number of boxes in the first row of
$\mu$.
\end{definition}

\begin{definition}
Let $\phi_{N-1} : V_{N-1} \rightarrow V_N$ be an $S_{N-1}$-equivariant map from a representation
of $S_{N-1}$ to a representation of $S_N$ and let $Q_{N+1}$ be an $S_{N+1}$-subrepresentation
of $\Stabb{V_{N-1}}{V_{N}}{\phi_{N-1}}$.  The {\em quotiented central stabilization sequence} associated
to $\phi_{N-1}$ and $Q_{N+1}$ is the sequence
$$V_{N-1} \stackrel{\phi_{N-1}}{\longrightarrow} V_N \stackrel{\phi_N}{\longrightarrow} V_{N+1} \stackrel{\phi_{N+1}}{\longrightarrow} V_{N+2} \stackrel{\phi_{N+2}}{\longrightarrow} \cdots$$
which is inductively defined as follows.  First, $V_{N+1} = \Stabb{V_{N-1}}{V_{N}}{\phi_{N-1}} / Q_{N+1}$ and
$\phi_N$ is the natural map.  Next, assume that $V_{n-1}$ and $V_n$ and $\phi_{n-1} : V_{n-1} \rightarrow V_n$
are defined for some $n \geq N+1$.  Then $V_{n+1} = \Stabb{V_{n-1}}{V_n}{\phi_{n-1}}$ and
$\phi_n : V_n \rightarrow V_{n+1}$ is the natural map.  If $Q_{N+1}=0$, then we will simply call
this the {\em central stabilization sequence} associated to $\phi_{N-1}$.
\end{definition}

\noindent
Our first lemma restricts the Specht modules that can appear in a central stabilization sequence.
Recall that if either $\Char(\Field)=0$ or $\Char(\Field) \geq n+1$, then
every representation of $S_n$ can be decomposed
into a direct sum of Specht modules and the isotypic components of this decomposition are unique.

\begin{lemma}
\label{lemma:widthlowerbound}
Let $\phi_{N-1} : V_{N-1} \rightarrow V_N$ be an $S_{N-1}$-equivariant map from a representation
of $S_{N-1}$ to a representation of $S_N$ and let
$$V_{N-1} \stackrel{\phi_{N-1}}{\longrightarrow} V_N \stackrel{\phi_N}{\longrightarrow} V_{N+1} \stackrel{\phi_{N+1}}{\longrightarrow} V_{N+2} \stackrel{\phi_{N+2}}{\longrightarrow} \cdots$$
be the associated central stabilization sequence.  Consider $n \geq N$.  Assume that either
$\Char(\Field)=0$ or $\Char(\Field) \geq n+1$.  Then every Specht module that occurs in $V_n$ has width
at least $n-N$.
\end{lemma}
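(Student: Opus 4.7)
The plan is to prove this by induction on $n \geq N$, reducing to a universal version of the central stabilization sequence in which the role of $V_{N-1}$ is suppressed. The base case $n = N$ is vacuous because every partition has positive width.

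For the inductive step, I would construct an auxiliary central stabilization sequence $\{\tilde V_m\}_{m \geq N-1}$ with initial data $\tilde V_{N-1} = 0$, $\tilde V_N = V_N$, and $\tilde \phi_{N-1} = 0$. By functoriality of the central stabilization construction, the pair of maps $0 \to V_{N-1}$ and $\mathrm{id}: V_N \to V_N$ is a morphism of initial triples that lifts inductively to $S_m$-equivariant surjections $\tilde V_m \twoheadrightarrow V_m$ for all $m \geq N$. Since the characteristic assumption makes these representations semisimple, the Specht module isotypes appearing in $V_m$ form a subset of those appearing in $\tilde V_m$, so it suffices to prove that every Specht module in $\tilde V_m$ has width at least $m - N$.

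The key step is to identify $\tilde V_{N+k} \cong \InduceTriv{V_N}{k} = \Ind_{S_N \times S_k}^{S_{N+k}} V_N \boxtimes \Trivial{k}$ for all $k \geq 0$, proved by induction on $k$ via the universal property. By Frobenius reciprocity, $\InduceTriv{V_N}{k}$ is characterized by $\Hom_{S_{N+k}}(\InduceTriv{V_N}{k}, W) = \Hom_{S_N}(V_N, W^{S_k})$, where $S_k = S_{\{N+1, \ldots, N+k\}}$. A homomorphism from $\tilde V_{N+k+1}$ to $W$ corresponds, via the cokernel description of central stabilization in Lemma \ref{lemma:centralstab} together with Frobenius reciprocity, to a map $V_N \to W^{S_k}$ whose image is additionally fixed by $(N+k, N+k+1)$; since $\langle S_k, (N+k, N+k+1) \rangle = S_{k+1}$, this recovers the universal property of $\InduceTriv{V_N}{k+1}$.

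To finish, I would apply Pieri's rule: $\InduceTriv{V_N}{k}$ decomposes into Specht modules of the form $S^{\lambda + hs}$, where $S^{\lambda}$ ranges over Specht modules appearing in $V_N$ and $hs$ is a length-$k$ horizontal strip addable to $\lambda$. If $a_i$ denotes the number of boxes the strip contributes to row $i$, then the horizontal-strip condition forces $\lambda_{i+1} + a_{i+1} \leq \lambda_i$ for each $i \geq 1$; telescoping these inequalities gives $\sum_{i \geq 2} a_i \leq \lambda_1$, so $a_1 \geq k - \lambda_1$, and hence the width of $\lambda + hs$ equals $\lambda_1 + a_1 \geq k$, as required. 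The main obstacle is the universal-property identification $\tilde V_{N+k} \cong \InduceTriv{V_N}{k}$, which requires careful iterated bookkeeping to verify that the successive central stabilization relations on consecutive transpositions cumulatively force the entire subgroup $S_k$ to act trivially on the image of $V_N$.
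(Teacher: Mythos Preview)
Your proof is correct and follows essentially the same strategy as the paper: both establish that $V_n$ is a quotient of $\InduceTriv{V_N}{n-N}$ and then invoke Pieri's rule. The paper compresses your auxiliary-sequence construction and universal-property identification into the single phrase ``by construction, $V_n$ is a quotient of $\Ind_{S_N \times S_{n-N}}^{S_n} V_N \boxtimes \Trivial{n-N}$,'' and for the width bound it observes directly that adding $n-N$ boxes with no two in the same column forces at least $n-N$ columns (your telescoping inequality reaches the same conclusion by a longer route).
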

\begin{proof}
Let $S^{\mu}(\Field)$ be a Specht module that occurs in $V_n$.
Recall that $\Trivial{k} \cong \Field$ is the trivial representation of $S_k$.  By construction,
$V_n$ is a quotient of $V_n':=\Ind_{S_N \times S_{n-N}}^{S_n} V_N \boxtimes \Trivial{n-N}$, so
$S^{\mu}(\Field)$ appears in $V_n'$.  The Littlewood-Richardson rule 
says that there exists some $\nu \vdash N$ such that $S^{\nu}(\Field)$ appears in
$V_N$ and such that $\mu$ is obtained by adding $n-N$ boxes to $\nu$ with no two boxes
added to the same column (see (\cite[\S 16]{JamesSymmetric}; the special case we are using is often called 
Pieri's formula).  Thus $\mu$ has at least $n-N$ columns, and hence
at least $n-N$ boxes in its first row.
\end{proof}

We now need three more definitions.

\begin{definition}
Let $V_n$ be a representation of $S_n$.  Assume that either $\Char(\Field) = 0$ or $\Char(\Field) \geq n+1$.
Then the {\em width} of $V_n$ is the maximum width of a Specht module that occurs in $V_n$.
\end{definition}

\noindent
In the following definition, observe that there is no assumption on $\Char(\Field)$.

\begin{definition}
Let $V_n$ be a representation of $S_n$.  We say that $V_n$ has {\em constant width $k$} if it can
be decomposed into a direct sum of Specht modules of width $k$.  We say that an $S_n$-subrepresentation
$W_n$ of $V_n$ has {\em constant cowidth $k$} if $V_n/W_n$ has constant width $k$.
\end{definition}

\begin{remark}
A theorem of Hemmer-Nakano \cite{HemmerNakano} says that if a representation of $S_n$ can be decomposed
into a direct sum of Specht modules, then the Specht modules that occur are independent
of the decomposition (as long as $\Char(\Field) \geq 5$).
\end{remark}

\noindent
Recall that if $\mu \vdash n$, then there
is a natural stabilization map $S^{\mu}(\Field) \hookrightarrow S^{\stab(\mu)}(\Field)$.

\begin{definition}
Let $\phi_{n} : V_{n} \rightarrow V_{n+1}$ be an $S_{n}$-equivariant map from a representation 
of $S_{n}$ to a representation of $S_{n+1}$.  Assume that $V_n$ has constant width $k$, and let
$$V_n = \bigoplus_{i \in I} S^{\mu_i}(\Field)$$
be the associated decomposition.  Then $\phi_n$ is a {\em stabilization map} if we can write
$$V_{n+1} = \bigoplus_{i \in I} S^{\stab(\mu_i)}(\Field)$$
such that the restriction of $\phi_n$ to $S^{\mu_i}(\Field)$ is the stabilization map
$S^{\mu_i}(\Field) \hookrightarrow S^{\stab(\mu_i)}(\Field)$.
\end{definition}

\begin{lemma}
\label{lemma:weakstabilization}
Let $V_n$ be a representation of $S_n$, let $V_{n+1}$ be an $S_{n+1}$ representation obtained
as a quotient of $\Induce{V_n}{1}$, and let $\phi_n : V_n \rightarrow V_{n+1}$ be the natural
$S_{n}$-equivariant map.  Assume that either $\Char(\Field)=0$ or
$\Char(\Field) \geq n+2$.  Let $k$ be the width of $V_n$, 
let $W_n$ be the subspace of $V_n$ spanned by Specht modules of width strictly less
than $k$, and let $W_{n+1}$ be the subspace of $V_{n+1}$ spanned by Specht modules
of width strictly less than $k+1$.  The following then hold.
\begin{itemize}
\item $\phi_n(W_n) \subset W_{n+1}$, so there is an induced map 
$\hat{\phi}_n : V_n/W_n \rightarrow V_{n+1}/W_{n+1}$.
\item We can factor $\hat{\phi}_n$ as 
$V_n/W_n \stackrel{\hat{\phi}_n'}{\rightarrow} \hat{V}_n' \stackrel{\hat{\phi}_n''}{\rightarrow} V_{n+1}/W_{n+1}$,
where $\hat{\phi}_n'$ is a surjection and $\hat{\phi}_n''$ is a stabilization map.
\end{itemize}
\end{lemma}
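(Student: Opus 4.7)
The plan is to exploit the branching rule (Theorem~\ref{theorem:restriction}) and Pieri's rule, together with the semisimplicity of $S_n$- and $S_{n+1}$-representations, all of which are available since $\Char(\Field)=0$ or $\Char(\Field) \geq n+2$. Under these hypotheses every relevant representation decomposes uniquely into isotypic components of Specht modules.

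For the first claim I would write $W_n = \bigoplus_i S^{\mu_i}(\Field)$ with each $\mu_i$ of width strictly less than $k$. Because $\phi_n$ is $S_n$-equivariant, Schur's lemma implies that the image of each $S^{\mu_i}(\Field)$ lies in the union of the $S_n$-isotypic components of $\Res V_{n+1}$ isomorphic to $S^{\mu_i}(\Field)$. By the branching rule, such components live inside the Specht modules $S^{\lambda}(\Field) \subset V_{n+1}$ for partitions $\lambda$ obtained from $\mu_i$ by adding one box; any such $\lambda$ has width at most $\mathrm{width}(\mu_i)+1 \leq k < k+1$, and hence lies in $W_{n+1}$, proving $\phi_n(W_n) \subset W_{n+1}$.

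For the second claim I first set up the factorization. Pieri's rule applied to $\Induce{V_n}{1}$ shows that every Specht module occurring in $V_{n+1}$ has width at most $k+1$, so $V_{n+1}/W_{n+1}$ has constant width $k+1$; decompose it as $\bigoplus_j S^{\nu_j}(\Field)$, set $\mu_j' = \stab^{-1}(\nu_j)$, let $\hat V_n' = \bigoplus_j S^{\mu_j'}(\Field)$, and let $\hat\phi_n''$ be the direct sum of the stabilization maps $S^{\mu_j'}(\Field) \hookrightarrow S^{\nu_j}(\Field)$; this $\hat\phi_n''$ is a stabilization map by definition. By the branching rule, $S^{\mu_j'}(\Field)$ is the unique width-$k$ summand of $\Res S^{\nu_j}(\Field)$, so Schur's lemma combined with the fact that $V_n/W_n$ has constant width $k$ forces the image of $\hat\phi_n$ to lie in $\bigoplus_j S^{\mu_j'}(\Field) = \hat\phi_n''(\hat V_n')$. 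Since $\hat\phi_n''$ is injective, this produces a unique $S_n$-equivariant lift $\hat\phi_n' : V_n/W_n \to \hat V_n'$ with $\hat\phi_n'' \circ \hat\phi_n' = \hat\phi_n$.

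The main obstacle will be proving that $\hat\phi_n'$ is surjective. The surjection $\Induce{V_n}{1} \to V_{n+1}$ together with the fact that $\Induce{V_n}{1}$ is $S_{n+1}$-generated by its natural copy of $V_n$ shows that $\phi_n(V_n)$ generates $V_{n+1}$ as an $S_{n+1}$-representation, hence the image $I := \hat\phi_n(V_n/W_n)$ generates $V_{n+1}/W_{n+1}$. Grouping repeated $\nu_j$'s, I would write $V_{n+1}/W_{n+1} = \bigoplus_\nu S^\nu(\Field) \otimes M_\nu$ as a sum of isotypic components, so that $\hat\phi_n''(\hat V_n') = \bigoplus_\nu S^{\stab^{-1}(\nu)}(\Field) \otimes M_\nu$ and $I = \bigoplus_\nu S^{\stab^{-1}(\nu)}(\Field) \otimes I_\nu$ for subspaces $I_\nu \subseteq M_\nu$. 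Frobenius reciprocity combined with Pieri's rule now computes the $S_{n+1}$-span of $I$: every summand of $\Ind S^{\stab^{-1}(\nu)}(\Field)$ other than $S^\nu(\Field)$ has width $\leq k$ and so maps to zero in the constant-width-$(k+1)$ module $V_{n+1}/W_{n+1}$, leaving the $S_{n+1}$-span equal to $\bigoplus_\nu S^\nu(\Field) \otimes I_\nu$. Since this must equal $V_{n+1}/W_{n+1}$, we obtain $I_\nu = M_\nu$ for every $\nu$, hence $I = \hat\phi_n''(\hat V_n')$, which is exactly the surjectivity of $\hat\phi_n'$.
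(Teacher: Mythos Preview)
Your proof is correct and rests on the same ingredients as the paper's (Pieri's rule, the branching rule, Schur's lemma, and semisimplicity), but the two arguments are organized differently. The paper first treats the special case $V_n = S^{\mu}(\Field)$ and $V_{n+1} = \Induce{S^{\mu}(\Field)}{1}$, showing directly that the induced top-quotient map is the stabilization map; for general $V_n$ it then works with the full width-$(k{+}1)$ quotient $\hat V_{n+1}'$ of $\Induce{V_n}{1}$, matches its decomposition $\bigoplus_{j\in J} S^{\nu_j}(\Field)$ with a decomposition $\hat V_n = \bigoplus_{j\in J} S^{\mu_j}(\Field)$ via the special case and Schur's lemma, and finally defines $\hat V_n' = \hat V_n / Q_n$ where $Q_n$ corresponds to $Q_{n+1} = \ker(\hat V_{n+1}' \to V_{n+1}/W_{n+1})$. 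In that setup $\hat\phi_n'$ is a quotient map, so surjectivity is automatic. You instead build $\hat V_n'$ directly from the decomposition of $V_{n+1}/W_{n+1}$, lift $\hat\phi_n$ through the stabilization embedding $\hat\phi_n''$, and then prove surjectivity separately by a generation argument. The paper's route buys you surjectivity for free at the cost of passing through the intermediate object $\hat V_{n+1}'$; yours is more direct but trades that for the extra surjectivity step.

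One small point worth making explicit in your surjectivity argument: your Frobenius--Pieri computation shows only the \emph{upper} bound, namely that the $S_{n+1}$-span of $I = \bigoplus_\nu S^{\stab^{-1}(\nu)}(\Field)\otimes I_\nu$ is contained in $\bigoplus_\nu S^{\nu}(\Field)\otimes I_\nu$. The matching lower bound (that the span is all of $\bigoplus_\nu S^{\nu}(\Field)\otimes I_\nu$) follows immediately from the irreducibility of each $S^{\nu}(\Field)$, since any nonzero $S_n$-submodule of $S^\nu(\Field)$ already generates $S^\nu(\Field)$ over $S_{n+1}$; you use this implicitly, but stating it would close the loop.
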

\begin{proof}
The restriction rule (Theorem \ref{theorem:restriction}) implies that $\phi_n(W_n) \subset W_{n+1}$,
so we concentrate on the second claim.  To simplify our notation, write $\hat{V}_n = V_n/W_n$
and $\hat{V}_{n+1} = V_{n+1} / W_{n+1}$

Assume first that $V_n = \hat{V}_n = S^{\mu}(\Field)$
and $V_{n+1} = \Induce{V_n}{1}$.  The
branching rule \cite[\S 9.2]{JamesSymmetric} implies that $\hat{V}_{n+1} = S^{\stab(\mu)}(\Field)$.
The universal property 
$\Hom_{S_n}(V_n,\Res_{S_n}^{S_{n+1}} \hat{V}_{n+1}) = \Hom_{S_{n+1}}(\Induce{V_n}{1},\hat{V}_{n+1})$ implies
that $\hat{\phi}_{n} \neq 0$.  Since $V_n$ is irreducible, $\hat{\phi}_n$ must be injective.  The restriction
rule (Theorem \ref{theorem:restriction}) then implies that $\hat{\phi}_n$ is the stabilization map.

We now consider general $V_n$.  Let $\hat{V}_{n+1}'$ be the quotient of $\Induce{V_n}{1}$ by the
subspace spanned by all Specht modules of width strictly less than $k+1$.  There then
exists an $S_{n+1}$-subrepresentation $Q_{n+1}$ of $\hat{V}_{n+1}'$ such that 
$\hat{V}_{n+1} = \hat{V}_{n+1}' / Q_{n+1}$.  We can write
$$\hat{V}_{n+1}' = \bigoplus_{j \in J} S^{\nu_j}(\Field) \quad \text{and} \quad Q_{n+1} = \bigoplus_{j \in J'} S^{\nu_j}(\Field)$$
with $J' \subset J$.  It is an easy exercise using the results in the previous paragraph
together with Schur's lemma to show that we can write
$$\hat{V}_n = \bigoplus_{j \in J} S^{\mu_j}(\Field),$$
where for all $j \in J$ we have $\nu_j = \stab(\mu_j)$ and the restriction of the natural map
$\hat{V}_n \rightarrow \hat{V}_{n+1}'$ to $S^{\mu_j}(\Field)$ is the stabilization map
$S^{\mu_j}(\Field) \hookrightarrow S^{\nu_j}(\Field)$.  We can then let $\hat{V}_n' = \hat{V}_n/Q_n$,
where
$$Q_n = \bigoplus_{j \in J'} S^{\mu_j}(\Field) \subset \hat{V}_n,$$
and let $\hat{\phi}_n' : \hat{V}_n \rightarrow \hat{V}_n'$ and 
$\hat{\phi}_n'' : \hat{V}_n' \rightarrow \hat{V}_{n+1}$ be the natural maps.
\end{proof}

\begin{lemma}
\label{lemma:firstrow}
Let $\phi_{N-1} : V_{N-1} \rightarrow V_N$ be an $S_{N-1}$-equivariant map from a representation
of $S_{N-1}$ to a representation of $S_N$ and let $Q_{N+1}$ be an $S_{N+1}$-subrepresentation
of $\Stabb{V_{N-1}}{V_{N}}{\phi_{N-1}}$.  Let
$$V_{N-1} \stackrel{\phi_{N-1}}{\longrightarrow} V_N \stackrel{\phi_N}{\longrightarrow} V_{N+1} \stackrel{\phi_{N+1}}{\longrightarrow} V_{N+2} \stackrel{\phi_{N+2}}{\longrightarrow} \cdots,$$
be the associated quotiented central stabilization sequence.  Assume that either $\Char(\Field)=0$
or $\Char(\Field) \geq N+2$, and let $k$ be the width of $V_N$.
For all $n \geq N$, there then exist constant cowidth $k+(n-N)$ subrepresentations $W_n$ of $V_n$ 
such that the following hold.
\begin{enumerate}
\item The representations $W_N$ and $W_{N+1}$ have width at most $k-1$ and $k$, respectively.
\item For $n \geq N$, we have $\phi_n(W_n) \subset W_{n+1}$.  Moreover, for $n \geq N+1$ the
induced map $\hat{\phi}_n : V_n/W_n \rightarrow V_{n+1}/W_{n+1}$ is a stabilization map.
\item For $n \geq N$, let $\phi_n' : W_n \rightarrow W_{n+1}$ be the restriction of $\phi_n$.
There then exists some $S_{N+2}$-subrepresentation $Q_{N+2}'$ of $\Stabb{W_N}{W_{N+1}}{\phi_N'}$ such that
the sequence 
$$W_{N} \stackrel{\phi_{N}'}{\longrightarrow} W_{N+1} \stackrel{\phi_{N+1}'}{\longrightarrow} W_{N+2} \stackrel{\phi_{N+2}'}{\longrightarrow} W_{N+3} \stackrel{\phi_{N+3}'}{\longrightarrow} \cdots$$
is the quotiented central stabilization sequence associated to $\phi_N'$ and $Q_{N+2}'$.
\end{enumerate}
\end{lemma}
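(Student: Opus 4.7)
The plan is to define $W_n \subset V_n$ as the span of all Specht modules of width strictly less than $k + (n-N)$, and to verify the three required properties by induction on $n$, essentially by iterating Lemma \ref{lemma:weakstabilization}. Property 1 is immediate from the definition. For property 2, first observe that $V_{n+1}$ is always a quotient of $\Induce{V_n}{1}$ for every $n \geq N$: when $n = N$ this holds because $V_{N+1}$ is a quotient of $\Stabb{V_{N-1}}{V_N}{\phi_{N-1}}$, which by Lemma \ref{lemma:centralstab} is itself a quotient of $\Induce{V_N}{1}$, and when $n \geq N+1$ this is direct from $V_{n+1} = \Stabb{V_{n-1}}{V_n}{\phi_{n-1}}$. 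Lemma \ref{lemma:weakstabilization} thus applies at every step, giving the inclusion $\phi_n(W_n) \subset W_{n+1}$ together with a factorization $\hat\phi_n = \hat\phi_n'' \circ \hat\phi_n'$ in which $\hat\phi_n'$ is a surjection and $\hat\phi_n''$ is a stabilization map.

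The main obstacle is promoting $\hat\phi_n'$ to an isomorphism for $n \geq N+1$, so that $\hat\phi_n$ is itself a stabilization map. The leverage here is that for such $n$, $V_{n+1} = \Stabb{V_{n-1}}{V_n}{\phi_{n-1}}$ is the exact central stabilization (with no extra quotient by $Q$). By Lemma \ref{lemma:centralstab} we have an exact sequence
$$\Induce{V_{n-1}}{2} \longrightarrow \Induce{V_n}{1} \longrightarrow V_{n+1} \longrightarrow 0.$$
I would argue that passing to the top-width (namely width $k+(n+1-N)$) quotient on each term identifies $V_{n+1}/W_{n+1}$ with the central stabilization $\Stab{V_{n-1}/W_{n-1}}{V_n/W_n}$, which by Corollary \ref{corollary:stabilizespecht} applied summand-by-summand equals the stabilization of $V_n/W_n$. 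Comparing this identification with the factorization produced by Lemma \ref{lemma:weakstabilization} then forces $\hat\phi_n'$ to be an isomorphism. The case $n = N+1$ requires separate care since the inductive hypothesis does not yet supply that $\hat\phi_N$ is a stabilization map; here one must use the restriction rule (Theorem \ref{theorem:restriction}) to control how the lower-width subobjects of $V_N$ can interact with the top-width part of $V_{N+1}$ after quotienting by $Q_{N+1}$.

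Finally, for property 3, set $\phi_n' = \phi_n|_{W_n}$. Functoriality of central stabilization applied to the subobjects $W_{m-2} \subset V_{m-2}$ and $W_{m-1} \subset V_{m-1}$ yields natural $S_m$-equivariant maps $\alpha_m : \Stabb{W_{m-2}}{W_{m-1}}{\phi_{m-2}'} \to V_m$ for each $m \geq N+2$; the restriction rule ensures $\alpha_m$ lands in $W_m$. A diagram chase using the exact sequence from Lemma \ref{lemma:centralstab}, the top-width identification $V_m/W_m \cong \Stab{V_{m-2}/W_{m-2}}{V_{m-1}/W_{m-1}}$ established in the main obstacle, and exactness of the induction functor then shows that $\alpha_m$ surjects onto $W_m$ and is an isomorphism for $m \geq N+3$. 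Setting $Q_{N+2}' = \ker(\alpha_{N+2})$ completes the verification of property 3.
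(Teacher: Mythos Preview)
Your approach has a genuine gap concerning the characteristic hypothesis. The lemma assumes only $\Char(\Field)=0$ or $\Char(\Field)\geq N+2$, but your construction needs semisimplicity of $S_n$-representations for \emph{every} $n\geq N$: you define $W_n$ as ``the span of all Specht modules of width strictly less than $k+(n-N)$'', you invoke Lemma~\ref{lemma:weakstabilization} ``at every step'' (its hypothesis is $\Char(\Field)\geq n+2$), and you appeal to the restriction rule Theorem~\ref{theorem:restriction} (hypothesis $\Char(\Field)\geq n+1$) to show $\alpha_m$ lands in $W_m$. Once $n\geq\Char(\Field)$ none of these tools are available, and $V_n$ need not even decompose into Specht modules, so your $W_n$ is not defined.

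The paper avoids this by defining only $W_N$ and $W_{N+1}$ via Specht decompositions (this needs just $\Char(\Field)\geq N+2$) and then defining $W_{n+1}$ for $n\geq N+1$ \emph{inductively} as the image of $\Stabb{W_{n-1}}{W_n}{\phi'_{n-1}}$ in $V_{n+1}$. With that choice, surjectivity in property~3 is built in; the constant-cowidth and stabilization-map statements come from Corollary~\ref{corollary:stabilizespecht} (valid over any field) applied to the already-established stabilization maps on quotients; and injectivity of $\alpha_m$ for $m\geq N+3$ is the real work, requiring exactness of a three-term piece of the central stability chain complex for the quotient sequence $V_{\bullet}/W_{\bullet}$ (Proposition~\ref{proposition:spechtexactness}, again characteristic-free). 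Your ``diagram chase'' for the isomorphism $\alpha_m$ is exactly this step, and it cannot be completed without that exactness input---you should cite it explicitly rather than fold it into a chase.
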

\begin{proof}
Let $W_N$ (resp.\ $W_{N+1}$) be the subspace of $V_{N}$ (resp.\ $V_{N+1}$) spanned by Specht
modules of width strictly less than $k$ (resp.\ $k+1$).  Condition 1 is clearly satisfied, and
Lemma \ref{lemma:weakstabilization} says that $\phi_N(W_N) \subset W_{N+1}$.  Assume now
that $n \geq N+1$ and that we have constructed $W_N,\ldots,W_{n}$ satisfying the conclusions of
the lemma.  

\BeginSteps
\begin{step}
We construct $W_{n+1}$.
\end{step}

\noindent
We need some notation.  
\begin{itemize}
\item Let $\phi_{n-1}' : W_{n-1} \rightarrow W_n$ be the restriction of $\phi_{n-1}$ and let
$\hat{\phi}_{n-1} : V_{n-1}/W_{n-1} \rightarrow V_n/W_n$ be the induced map.
\item Let $\partial_{n-1} : \Induce{V_{n-1}}{2} \rightarrow \Induce{V_n}{1}$ and 
$\partial_{n-1}' : \Induce{W_{n-1}}{2} \rightarrow \Induce{W_n}{1}$ and $\hat{\partial}_{n-1} : \Induce{V_{n-1}/W_{n-1}}{2} \rightarrow \Induce{V_n/W_n}{1}$
be the $(n+1)$-boundary maps associated to $\phi_{n-1}$ and $\phi_{n-1}'$ and $\hat{\phi}_{n-1}$, respectively.
\item Let $\widetilde{W}_{n+1} = \Stabb{W_{n-1}}{W_n}{\phi_{n-1}'}$.
\end{itemize}
Lemma \ref{lemma:centralstab} together with our assumptions implies that $V_{n+1} = \Coker(\partial_{n-1})$ and
$\widetilde{W}_{n+1} = \Coker(\partial_{n-1}')$.  For
all $k \geq 0$, the functor $\Induce{\bullet}{k}$ is exact.  We thus have a commutative diagram
$$
\xymatrix{
0 \ar[r] & \Induce{W_{n-1}}{2} \ar[r] \ar[d]_{\partial_{n-1}'} & \Induce{V_{n-1}}{2} \ar[r] \ar[d]_{\partial_{n-1}} & \Induce{V_{n-1}/W_{n-1}}{2} \ar[r] \ar[d]_{\hat{\partial}_{n-1}} & 0 \\
0 \ar[r] & \Induce{W_n}{1}     \ar[r] \ar@{->>}[d]             & \Induce{V_n}{1}     \ar[r] \ar@{->>}[d]            & \Induce{V_n/W_n}{1}         \ar[r]                               & 0 \\
         & \widetilde{W}_{n+1}                                 & V_{n+1}                                            &                                                                  &}$$
whose rows and columns are exact.
We can form $V_{n+1} = \Coker(\partial_{n-1})$ in two steps.  First, 
let $\widetilde{V}_{n+1} = \Induce{V_n}{1} / \partial_{n-1}(\Induce{W_{n-1}}{2})$.  Chasing the above diagram,
we see that there is an exact sequence
$$0 \longrightarrow \widetilde{W}_{n+1} \longrightarrow \widetilde{V}_{n+1} \longrightarrow \Induce{V_n/W_n}{1} \longrightarrow 0.$$
Also, the map $\Induce{V_{n-1}}{2} \rightarrow \widetilde{V}_{n+1}$ factors through a map
$\overline{\partial}_{n-1} : \Induce{V_{n-1}/W_{n-1}}{2} \rightarrow \widetilde{V}_{n+1}$ satisfying
$V_{n+1} = \Coker(\overline{\partial}_{n-1})$.  Let $W_{n+1}$ be the image of $\widetilde{W}_{n+1}$ in $V_{n+1}$.

\begin{step}
$W_{n+1}$ satisfies the second conclusion of the lemma.
\end{step}

\noindent
We have $\phi_n(W_n) \subset W_{n+1}$ by construction, so
we must show that the induced map $\hat{\phi}_n : V_{n}/W_{n} \rightarrow V_{n+1}/W_{n+1}$ is a stabilization map.  Observe
that we have a commutative diagram
$$\xymatrix{
         &                            & \Induce{V_{n-1}/W_{n-1}}{2} \ar[d]_{\overline{\partial}_{n-1}} \ar[dr]^{\hat{\partial}_{n-1}} &&\\
0 \ar[r] & \widetilde{W}_{n+1} \ar[r] \ar@{->>}[d] & \widetilde{V}_{n+1} \ar[r] \ar@{->>}[d] & \Induce{V_n/W_n}{1} \ar[r] & 0 \\
0 \ar[r] & W_{n+1} \ar[r]                          & V_{n+1} \ar[r]                          & V_{n+1}/W_{n+1} \ar[r]     & 0}$$
whose rows and columns are exact.  Chasing this diagram, we deduce that there is a short exact sequence
\begin{equation}
\label{eqn:vmodwstabilize}
\Induce{V_{n-1}/W_{n-1}}{2} \stackrel{\hat{\partial}_{n-1}}{\longrightarrow} \Induce{V_n/W_n}{1} \longrightarrow V_{n+1}/W_{n+1} \longrightarrow 0.
\end{equation}
Lemma \ref{lemma:centralstab} then implies that $V_{n+1}/W_{n+1} = \Stab{V_{n-1}/W_{n-1}}{V_n/W_n}$.

There are now two cases.  If $n \geq N+2$, then the map $V_{n-1}/W_{n-1} \rightarrow V_n/W_n$ is a stabilization map by induction,
so Corollary \ref{corollary:stabilizespecht} implies that $\hat{\phi}_n$ is a stabilization map.  If instead $n = N+1$, then
Lemma \ref{lemma:weakstabilization} says that we can factor $\hat{\phi}_{n-1}$ as a composition
$$V_{n-1}/W_{n-1} \stackrel{\hat{\phi}_{n-1}'}{\rightarrow} \hat{V}_{n-1}' \stackrel{\hat{\phi}_{n-1}''}{\rightarrow} V_{n}/W_{n},$$
where $\hat{\phi}_{n-1}'$ is a surjection and $\hat{\phi}_{n-1}''$ is a stabilization map.  Letting $\hat{\partial}_{n-1}''$ be
the $(n+1)$-boundary map associated to $\hat{\phi}_{n-1}''$, we can therefore factor $\hat{\partial}_{n-1}$ as a composition
$$\Induce{V_{n-1}/W_{n-1}}{2} \twoheadlongrightarrow \Induce{\hat{V}_{n-1}'}{2} \stackrel{\hat{\partial}_{n-1}''}{\longrightarrow} \Induce{V_n/W_n}{1}.$$
Combining this with \eqref{eqn:vmodwstabilize}, we obtain an exact sequence
$$\Induce{\hat{V}_{n-1}'}{2} \stackrel{\hat{\partial}_{n-1}''}{\longrightarrow} \Induce{V_n/W_n}{1} \longrightarrow V_{n+1}/W_{n+1} \longrightarrow 0,$$
so we can apply Lemma \ref{lemma:centralstab} to deduce that $V_{n+1}/W_{n+1} = \Stab{\hat{V}_{n-1}'}{V_n/W_n}$ and then apply
Corollary \ref{corollary:stabilizespecht} to deduce that $\hat{\phi}_n$ is a stabilization map.

\begin{step}
$W_{n+1}$ satisfies the third conclusion of the lemma.
\end{step}

\noindent
Letting $Q_{n+1}' = \widetilde{W}_{n+1} \cap \Image(\overline{\partial}_{n-1})$, we have a short exact sequence
$$0 \longrightarrow Q_{n+1}' \longrightarrow \widetilde{W}_{n+1} \longrightarrow W_{n+1} \longrightarrow 0.$$
Since $\widetilde{W}_{n+1} = \Stab{W_{n-1}}{W_n}$, the desired conclusion follows if $n = N+1$.  Assume
now that $n \geq N+2$.  To prove the desired conclusion, we must show that $Q_{n+1}' = 0$.  
Let $\partial_{n-2} : \Induce{V_{n-2}}{3} \rightarrow \Induce{V_{n-1}}{2}$ and
$\hat{\partial}_{n-2} : \Induce{V_{n-2}/W_{n-2}}{3} \rightarrow \Induce{V_{n-1}/W_{n-1}}{2}$ be the
$(n+1)$-boundary maps associated to $\phi_{n-2}$ and $\hat{\phi}_{n-2}$, respectively.  
Lemma \ref{lemma:chaincomplex} implies that $\partial_{n-1} \circ \partial_{n-2} = 0$.  
Observe that we have the following commutative diagram.
\begin{equation}
\label{eqn:bigdiagram}
\xymatrix{
\Induce{V_{n-2}}{3} \ar[r]^-{\partial_{n-2}} \ar@{->>}[dd] & \Induce{V_{n-1}}{2}         \ar[rr]^-{\partial_{n-1}} \ar@{->>}[dd]                              &                              & \Induce{V_n}{1} \ar@{->>}[dd] \ar@{->>}[ld]\\
                                                           &                                                                                                  & \widetilde{V}_{n+1}  \ar@{->>}[dr] &                                      \\
\Induce{V_{n-2}/W_{n-2}}{3} \ar[r]^-{\hat{\partial}_{n-2}} & \Induce{V_{n-1}/W_{n-1}}{2} \ar[ur]^-{\overline{\partial}_{n-1}} \ar[rr]^-{\hat{\partial}_{n-1}} &                               & \Induce{V_n/W_n}{1}                       }
\end{equation}
Since $\partial_{n-1} \circ \partial_{n-2} = 0$, we can chase this diagram to see
that $\overline{\partial}_{n-1} \circ \hat{\partial}_{n-2} = 0$, i.e.\ that $\Image(\hat{\partial}_{n-2}) \subset \Ker(\overline{\partial}_{n-1})$.  Let
$\overline{\partial}'_{n-1} : \Induce{V_{n-1}/W_{n-1}}{2} / \Image(\hat{\partial}_{n-2}) \rightarrow \widetilde{V}_{n+1}$
be the induced map.  

There are now two cases.  If $n \geq N+3$, then by induction the maps
$\hat{\phi}_{n-2}$ and $\hat{\phi}_{n-1}$ are stabilization maps, so Proposition \ref{proposition:spechtexactness} implies
that the bottom row of \eqref{eqn:bigdiagram} is exact.  This implies that the composition
$$\Induce{V_{n-1}/W_{n-1}}{2} / \Image(\hat{\partial}_{n-2}) \stackrel{\overline{\partial}_{n-2}'}{\longrightarrow} \widetilde{V}_{n+1} \longrightarrow \Induce{V_n/W_n}{1}$$
is injective.  Since $\widetilde{W}_{n+1} = \Ker(\widetilde{V}_{n+1} \rightarrow \Induce{V_n/W_n}{1})$, we conclude
that 
\[Q_{n+1}' = \Image(\overline{\partial}_{n-2}) \cap \widetilde{W}_{n+1} = 0.\]
If $n = N+2$, then $\hat{\phi}_{n-1}$ is a stabilization map but $\hat{\phi}_{n-2}$ need not be.  However, Lemma \ref{lemma:weakstabilization} says
that we can factor $\hat{\phi}_{n-1}$ as a composition of a surjection with a stabilization map, and just like in Step 2 we can
use this to run the above argument and get that $Q_{n+1}' = 0$, as desired.
\end{proof}

\begin{table}
\begin{center}
$$\xymatrixrowsep{1pc}\xymatrix{
V_2 \ar@{->}[r] & V_3 \ar@3{->}[r]                        & V_4 \ar@3{->}[r]                           & V_5 \ar@3{->}[r]                           & V_6 \ar@3{->}[r]                           & V_7 \ar@3{->}[r]                           & \cdots \\
                & W^1_3 \ar@{->}[r] \ar@{^{(}->}[u]^{k_1} & W^1_4 \ar@2{->}[r] \ar@{^{(}->}[u]^{k_1+1} & W^1_5 \ar@3{->}[r] \ar@{^{(}->}[u]^{k_1+2} & W^1_6 \ar@3{->}[r] \ar@{^{(}->}[u]^{k_1+3} & W^1_7 \ar@3{->}[r] \ar@{^{(}->}[u]^{k_1+4} & \cdots \\
                &                                         & W^2_4 \ar@{->}[r] \ar@{^{(}->}[u]^{k_2}    & W^2_5 \ar@2{->}[r] \ar@{^{(}->}[u]^{k_2+1} & W^2_6 \ar@3{->}[r] \ar@{^{(}->}[u]^{k_2+2} & W^2_7 \ar@3{->}[r] \ar@{^{(}->}[u]^{k_2+3} & \cdots \\
                &                                         &                                            & W^3_5 \ar@{->}[r] \ar@{^{(}->}[u]^{k_3}    & W^3_6 \ar@2{->}[r] \ar@{^{(}->}[u]^{k_3+1} & W^3_7 \ar@3{->}[r] \ar@{^{(}->}[u]^{k_3+2} & \cdots \\
                &                                         &                                            &                                            & W^4_6 \ar@{->}[r] \ar@{^{(}->}[u]^{k_4}    & W^4_7 \ar@2{->}[r] \ar@{^{(}->}[u]^{k_4+1} & \cdots \\}$$
\end{center}
\caption{The $V_i$ and $W^j_i$ for $N=3$.  Triple horizontal arrows are central stabilizations, double horizontal arrows
are quotients of central stabilizations, and numbers on the vertical arrows are the
cowidths of the constant cowidth subrepresentations}
\label{table:centraltospecht}
\end{table}

\begin{proof}[{Proof of Theorem \ref{maintheorem:centraltospecht}}]
Let us first recall the setup.  We have a coherent sequence
\begin{equation}
\label{eqn:seqinquestion}
V_1 \stackrel{\phi_1}{\longrightarrow} V_2 \stackrel{\phi_2}{\longrightarrow} V_3 \stackrel{\phi_3}{\longrightarrow} V_4 \stackrel{\phi_4}{\longrightarrow} \cdots
\end{equation}
of representations of the symmetric group over $\Field$ which is centrally stable
starting at $N$.  Also, we have either $\Char(\Field)=0$ or $\Char(\Field) \geq 2N+2$.  Our
goal is to prove that \eqref{eqn:seqinquestion} is Specht stable starting at $2N+1$.

By assumption, the sequence
$$V_{N-1} \stackrel{\phi_{N-1}}{\longrightarrow} V_N \stackrel{\phi_N}{\longrightarrow} V_{N+1} \stackrel{\phi_{N+1}}{\longrightarrow} V_{N+2} \stackrel{\phi_{N+2}}{\longrightarrow} \cdots$$
is the central stabilization sequence associated to $\phi_{N-1}$.  Let $k_1$ be the maximal width
of $V_N$, which is well-defined by our assumptions on $\Char(\Field)$.  Clearly $k_1 \leq N$.  For
$n \geq N$, let $W_n^{1} < V_n$ be the constant cowidth $k_1+(n-N) \leq n$ subrepresentation  
given by Lemma \ref{lemma:firstrow}.  Let $k_2$ be the maximal width of $W_{N+1}^1$, which again
is well-defined.  Since $W_{N+1}^1$ has width at most $k_1$ by assumption, we see that $k_2 \leq N$.
The sequence
$$W_N^1 \longrightarrow W_{N+1}^1 \longrightarrow W_{N+2}^1 \longrightarrow W_{N+3}^1 \longrightarrow \cdots$$
is a quotiented central stabilization sequence, so we can apply Lemma \ref{lemma:firstrow} again
and obtain constant cowidth $k_2+(n-N-1)$ subrepresentations $W_n^2 < W_n^1$ for $n \geq N+1$.

By our assumptions on $\Char(\Field)$, this process can be repeated several times to obtain
$W_n^i$ for $n \geq N-1+i$ and $1 \leq i \leq N+1$.  Here $W_n^{i+1}$ is a constant cowidth
$k_{i+1} + (n-N-i)$ subrepresentation of $W_n^i$, where $k_{i+1} \leq N$.  To help
keep all of this straight, see Table \ref{table:centraltospecht}.  Now, by assumption
$W^{N+1}_{2N}$ (resp.\ $W^{N+1}_{2N+1}$) has width at most $k_{N+1}-1 \leq N-1$ (resp.\ $k_{N+1} \leq N$).  
Since $W^{N+1}_{2N}$ (resp.\ $W^{N+1}_{2N+1}$) is a subrepresentation of $V_{2N}$ (resp.\ $V_{2N+1}$),
Lemma \ref{lemma:widthlowerbound} implies that $W^{N+1}_{2N}=0$ and $W^{N+1}_{2N+1}=0$.  But
this implies that $W^{N+1}_{i} = 0$ for all $i \geq 2N$.  It follows that for $n \geq 2N$ we have
a filtration
$$V_n \supset W^1_n \supset W^2_n \supset \cdots \supset W^{2N+1}_n = 0.$$
This might not quite be a top-indexed Specht filtration (for example, if $k_1 < N$), but we can
obtain one by adding repeated terms as necessary.  Our assumptions then imply that with
respect to these filtrations the maps $V_n \rightarrow V_{n+1}$ are stabilization maps
for $n \geq 2N+1$, and we are done.
\end{proof}

\section{Central stability implies polynomial dimensions}
\label{section:polynomial}

We now prove Theorem \ref{maintheorem:polynomial}.  By Theorem \ref{maintheorem:centraltospecht}, it
is enough to prove that if $\mu \vdash n$, then there is a polynomial $\phi(k)$ such
that $\phi(k) = \Dim S^{\stab^k(\mu)}(\Field)$ for $k \geq 0$.  This follows easily from
the results in \cite{ChurchEllenbergFarb}, but we give a short direct proof.
If $\nu$ is a partition, then let $\text{ST}(\nu)$ be the set of standard tableau of shape $\nu$, so 
$|\text{ST}(\nu)| = \Dim S^{\nu}(\Field)$.  If $t$ is a tableau, then denote by 
$\text{UR}(t)$ the entry in the upper right hand corner of $t$.  Finally, define
\begin{align*}
X_{k} = \{\text{$(t,\sigma)$ $|$ }&\text{$t \in \text{ST}(\mu)$ and $\sigma : \{1,\ldots,n\} \rightarrow \{1,\ldots,n+k\}$ is an}\\
&\text{order-preserving injection with $\sigma(i)=i$ for $1 \leq i \leq \text{UR}(\mu)$}\}.
\end{align*}
We will construct a bijection $\psi_k : \text{ST}(\stab^k(\mu)) \rightarrow X_k$.  Since there are
$\binom{n-m+k}{k}$ order-preserving injections $\sigma : \{1,\ldots,n\} \rightarrow \{1,\ldots,n+k\}$ such that
$\sigma(i)=i$ for $1 \leq i \leq m$, it will follow that
$$\Dim S^{\stab^k(\mu)}(\Field) = |\text{ST}(\stab^k(\mu))| = \sum_{t \in \text{ST}(\mu)} \binom{n-\text{UR}(t)+k}{k},$$
a polynomial in $k$.

For $s \in \text{ST}(\stab^k(\mu))$, define $\psi_k(s)=(t,\sigma)$, where $t$ and $\sigma$ are as follows.
Let $s_2$ be the result of deleting the last $k$ boxes from the first row of $s$.  Let $\sigma : \{1,\ldots,n\} \rightarrow \{1,\ldots,n+k\}$ be the unique
order-preserving injection whose image is the set of entries of $s_2$.  Finally, let $t$ be the
result of replacing each entry $i$ in $s_2$ with $\sigma^{-1}(i)$.  It is easy to see that
$(t,\sigma) \in X_k$.  To see that $\psi_k$ is a bijection, define a map 
$\phi_k : X_k \rightarrow \text{ST}(\stab^k(\mu))$ via $\phi_k(t,\sigma)=s$, where $s$ is obtained
by first replacing each entry $i$ in $t$ with $\sigma(i)$ and then appending the numbers
$\{1,\ldots,n+k\} \setminus \Image(\sigma)$ (in order) to the end of the first row of the result.  Clearly
$\phi_k$ is a $2$-sided inverse to $\psi_k$.

{\raggedright
Andrew Putman\\
Department of Mathematics\\
Rice University, MS 136 \\
6100 Main St.\\
Houston, TX 77005\\
E-mail: {\tt andyp@rice.edu}}

\end{document}